\documentclass[psamsfonts]{amsart}

\usepackage[english]{babel}
\usepackage[utf8x]{inputenc}
\usepackage[T1]{fontenc}
\usepackage[parfill]{parskip}

\usepackage[top=3cm,bottom=3cm,left=3cm,right=3cm,marginparwidth=1.75cm]{geometry}


\usepackage{graphicx}
\graphicspath{C:\\Users\\David\\Documents\\Grad School\\Algebra\\HW10\\}

\usepackage[all,arc]{xy}
\usepackage{enumerate}
\usepackage{mathrsfs}

\newtheorem{thm}{Theorem}[section]
\newtheorem*{thm*}{Theorem}
\newtheorem{cor}[thm]{Corollary}
\newtheorem{proposition}[thm]{Proposition}
\newtheorem{lem}[thm]{Lemma}

\newtheorem{introthm}{Theorem}

\theoremstyle{definition}
\newtheorem{defn}[thm]{Definition}

\newtheorem{notn}[thm]{Notation}

\theoremstyle{remark}
\newtheorem{remark}[thm]{Remark}

\newtheorem{example}[thm]{Example}

\makeatletter
\let\c@refuation\c@thm
\makeatother
\numberwithin{equation}{section}

\usepackage{amsmath,amssymb}
\usepackage{mathtools}
\usepackage[colorlinks=true, allcolors=blue]{hyperref}
\usepackage{amsfonts}
\usepackage[mathscr]{euscript}
\usepackage{array}
\usepackage{caption}
\usepackage{tikz}
\usepackage{tikz-cd}
\usetikzlibrary{arrows, matrix}
\usepackage{cite}
\usepackage{relsize}

\newcommand{\squarebin}{\mathbin{\square}}

\DeclareMathOperator{\id}{id}

\DeclareMathOperator{\Set}{Set}

\DeclareMathOperator{\map}{Map}
\DeclareMathOperator{\core}{Core}
\DeclareMathOperator{\Mack}{Mack}
\DeclareMathOperator{\Tamb}{Tamb}

\newcommand{\liminj}{\varinjlim}

\newcommand{\burnside}{\mathcal{A}}
\newcommand{\Oburnside}{\mathcal{A}_{\mathcal{O}}}
\newcommand{\Tburnside}{\mathcal{P}}
\newcommand{\bitburnside}{\mathcal{P}^G_{\mathcal{O}_m,\mathcal{O}_a}}
\newcommand{\compPair}{(\mathcal{O}_m,\mathcal{O}_a)}
\newcommand{\tCompPair}{(\mathcal{T}_m,\mathcal{T}_a)}
\newcommand{\spec}{Sp}
\newcommand{\ie}{i.e. }

\usepackage{geometry}

\title{Bi-incomplete Tambara Functors as $\mathcal{O}$-commutative monoids}
\author{David Chan}

\begin{document}
	\maketitle
	
\begin{abstract}
	Tambara functors are an equivariant generalization of rings that appear as the homotopy groups of genuine equivariant commutative ring spectra.  In recent work, Blumberg and Hill have studied the corresponding algebraic structures, called bi-incomplete Tambara functors, that arise from ring spectra indexed on incomplete $G$-universes.  In this paper, we answer a conjecture of Blumberg and Hill by proving a generalization of the Hoyer--Mazur theorem in the bi-incomplete setting.  
	
	Bi-incomplete Tambara functors are characterized by indexing categories which parameterize incomplete systems of norms and transfers.  In the course of our work, we develop several new tools for studying these indexing categories.  In particular, we provide an easily checked, combinatorial characterization of when two indexing categories are compatible in the sense of Blumberg and Hill.
\end{abstract}

	\section{Introduction}\label{section:Introduction}
	
	Hill, Hopkins, and Ravenel's solution to the Kervaire invariant one problem \cite{HHR} has motivated a tremendous amount of recent work in developing a deeper understanding of the algebra underlying equivariant stable homotopy theory, such as \cite{Bohmann:Comp,GreenWitt,blumberghillbiincomplete,BlumbergHillIncomplete,HillAndreQuillen,HiHoLocalization,HiHo-ESMS,Hoyer, KMazurThesis}.  In contrast to their non-equivariant counterparts, genuine $G$-spectra can admit many different kinds of $E_{\infty}$-ring structures, parameterized by the $N_{\infty}$-operads of Blumberg and Hill \cite{BlumbergHillIncomplete}.  These same $N_{\infty}$-operads also parameterize a variety of possible additive structures on $G$-spaces and $G$-spectra indexed on incomplete universes. The aim of this paper is to better understand the ways these various flavors of additive and multiplicative structures coexist and interact with one another.
	 
	 For a finite group $G$, recall that a $G$-Mackey functor is a system of abelian groups $M(H)$ indexed on the subgroups $H$ of $G$.  For a pair of subgroups $K\leq H$, the groups $M(K)$ and $M(H)$ are connected by restriction and transfer homomorphisms.  Originally defined to study representation theory, Mackey functors also have a fruitful history as the landing spot of the equivariant cohomology theories first studied by Bredon \cite{Bredon}.   Work of May, and many others \cite{May:Alaska, LMayS}, showed that Mackey functors provide the correct algebraic setting to formulate fundamental concepts like equivariant Poincar\'e duality and $RO(G)$-graded cohomology groups.
	 
	 While Mackey functors should be thought of as the correct analogue of an abelian group in the equivariant setting, the role of commutative rings is played by objects known as Tambara functors.  A Tambara functor $S$ is a Mackey functor such that the abelian group $S(H)$ is a commutative ring for all $H\leq G$, together with multiplicative norm maps $N_K^H\colon S(K)\to S(H)$ for subgroups $K\leq H$.  Norm structures were first studied in connection to equivariant stable homotopy theory by Greenlees and May, and then more systematically by Hill, Hopkins, and Ravenel \cite{GreeMay:Local,HHR}.  If $E$ is a homotopy commutative ring in $G$-spectra, then $\pi_0(E)$ is naturally a Tambara functor.  In fact, work of Ullman has shown that the category of Tambara functors is equivalent to the category of equivariant Eilenberg-MacLane ring spectra \cite{Ullman:Tambara}. 
	 
	The study of Tambara functors has led to interesting developments in the general study of categories with $G$-action.  The category of Mackey functors has a symmetric monoidal product, called the box product.  Equipped with this structure, one might guess that the Tambara functors, in analogy with commutative rings in abelian groups, are the commutative monoids; this is not the case, as the commutative monoids are the Green functors.  Instead, to recover Tambara functors, one has to consider an equivariant symmetric monoidal structure \cite{HiHo-ESMS} on the category of Mackey functors.  The Hoyer--Mazur theorem states that category of Mackey functors admits such an equivariant symmetric monoidal structure, and that the Tambara functors are exactly the equivariant commutative monoids\cite{Hoyer,KMazurThesis}.  The main object of this paper is to prove a conjecture of Blumberg and Hill, extending the Hoyer--Mazur theorem to a statement about \emph{bi-incomplete Tambara functors}.
	
	We begin by describing two different variations of Mackey and Tambara functors.  The first variant is the \emph{incomplete Tambara functors} of Blumberg and Hill \cite{BlumbergHillIncomplete}, which are Tambara functors $S$ that only have some of their norms maps.  These structures arise naturally when studying localizations of equivariant ring spectra.  In particular, if $E$ is a commutative ring in $G$-spectra, there can exist localizations $\tilde{E}$ of $E$ that are \emph{not} commutative rings.  In particular, $\pi_0(\tilde{E})$ is not a Tambara functor because the process of localization makes some of the norms impossible to define.  The observation of Blumberg and Hill is that while some of the norms are lost, some of the norms remain, implying that $\pi_0(\tilde{E})$ is an incomplete Tambara functor.
	
	The second variant we need is the \emph{incomplete Mackey functors}.  In analogy with the incomplete Tambara functors, an incomplete Mackey functor is simply a Mackey functor which only has some of its transfer maps.  Such structures were first studied, to our knowledge, by Lewis as a natural home for the unstable equivariant homotopy groups of $G$-spaces, as well as homotopy groups of $G$-spectra indexed on incomplete $G$-universes \cite{Lewis:Hurewicz}.  Recently, these structures have been studied by Blumberg and Hill in connection to novel incomplete models for equivariant stable homotopy categories \cite{BHIncompleteStable}.
	
	Both ``incomplete'' settings arise from the restriction of some collections of operations, norms or transfer, allowed in our algebraic structures.  A bi-incomplete Tambara functor is a Tambara functor in which we are only allowed some of the norms and some of the transfers.  These objects were introduced by Blumberg and Hill in \cite{blumberghillbiincomplete} with examples coming from ring spectra indexed on incomplete $G$-universes.  In the same paper, Blumberg and Hill conjectured an incomplete version of the Hoyer--Mazur theorem.  Our first main theorem, which we state imprecisely here and more precisely in the main text, answers this conjecture in the affirmative.

	\begin{introthm}[Appears below as Theorem \ref{thm:secondMainTheorem}]\label{thm:introThm1}
		Any category of (additively) incomplete Mackey functors admits an equivariant symmetric monoidal structure for every compatible collection of multiplicative norm maps.  In each of these structures, the equivariant commutative monoids are exactly the bi-incomplete Tambara functors.
	\end{introthm}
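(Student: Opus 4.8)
The plan is to construct the desired equivariant symmetric monoidal structure as a Day convolution, adapting Hoyer's proof of the classical Hoyer--Mazur theorem \cite{Hoyer} to the bi-incomplete setting, with the new subtleties controlled by the compatibility of the indexing categories. Throughout I use the identifications, recalled earlier in the paper, of the category $\Mack^{\mathcal{O}_a}$ of additively $\mathcal{O}_a$-incomplete Mackey functors with the product-preserving presheaves on the $\mathcal{O}_a$-incomplete Burnside category $\burnside_{\mathcal{O}_a}$ --- spans of finite $G$-sets whose transfer leg is $\mathcal{O}_a$-admissible --- and of the bi-incomplete Tambara functors for the compatible pair $\compPair$ with the product-preserving presheaves on the bispan category $\bitburnside$, whose morphisms are bispans $X \leftarrow S \to T \to Y$ with $S \to T$ an $\mathcal{O}_m$-admissible norm leg and $T \to Y$ an $\mathcal{O}_a$-admissible transfer leg. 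The box product on $\Mack^{\mathcal{O}_a}$ is already the Day convolution of the Cartesian symmetric monoidal structure on finite $G$-sets, regarded as a monoidal structure on $\burnside_{\mathcal{O}_a}$; what must be added is a coherent system of norm functors making this into an $\mathcal{O}_m$-symmetric monoidal category --- the incomplete refinement, due to Blumberg and Hill, of the equivariant symmetric monoidal categories of Hill and Hopkins.

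\emph{Step 1: an $\mathcal{O}_m$-symmetric monoidal structure on $\burnside_{\mathcal{O}_a}$.} For each $\mathcal{O}_m$-admissible map $f$ of finite $G$-sets I define a norm functor on the incomplete Burnside categories via Tambara's exponential (distributivity) construction, which replaces the transfer leg of a span by its dependent product along $f$. The only non-formal point is that the transfer leg of the output is again $\mathcal{O}_a$-admissible --- equivalently, that $\mathcal{O}_a$ is closed under $\mathcal{O}_m$-indexed norms --- and this is \emph{exactly} the content of compatibility for $\compPair$. Here I invoke the combinatorial characterization of compatibility established earlier in the paper, which reduces this closure statement to an explicit condition on orbit types that can be checked directly. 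Using the same criterion one verifies that the norms are symmetric monoidal for the Cartesian structure, that they compose as prescribed by composition in $\mathcal{O}_m$ (the Mackey-type formula for iterated norms), and that they commute with restriction through the relevant exponential diagrams (the norm double-coset formula); assembled, these statements say precisely that $\burnside_{\mathcal{O}_a}$ is an $\mathcal{O}_m$-symmetric monoidal category.

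\emph{Step 2: transport to Mackey functors.} Applying Day convolution and the associated indexed left Kan extensions to the $\mathcal{O}_m$-symmetric monoidal category $\burnside_{\mathcal{O}_a}$ endows the presheaf categories on $\burnside_{\mathcal{O}_a}$ with an $\mathcal{O}_m$-symmetric monoidal structure, and it remains only to check that the box product and the norm functors preserve the full subcategories of product-preserving presheaves, so that the structure descends to $\underline{\Mack}^{\mathcal{O}_a}$. For the box product this is classical. For the norms one must show the norm of an $\mathcal{O}_a$-incomplete Mackey functor is again one; the decisive point is that the norm functor on $\burnside_{\mathcal{O}_a}$ decomposes the norm of a disjoint union of orbits --- the additive structure --- into a finite product, via a Tambara-reciprocity-type expansion whose summands involve only $\mathcal{O}_m$-norms of $\mathcal{O}_a$-admissible transfers and hence, again by compatibility, lie in $\burnside_{\mathcal{O}_a}$. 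This produces the equivariant symmetric monoidal structure on $\underline{\Mack}^{\mathcal{O}_a}$ asserted in the first sentence of the theorem.

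\emph{Step 3: the commutative monoids.} An $\mathcal{O}_m$-commutative monoid in $\underline{\Mack}^{\mathcal{O}_a}$ is, by definition, an object $M$ with a coherent unit, a multiplication $M \squarebin M \to M$, and norm maps $N_f M \to M$ for every $\mathcal{O}_m$-admissible $f$, subject to the interchange coherences. By the universal property of Day convolution together with a Yoneda argument, specifying such a structure on a product-preserving $M \colon \burnside_{\mathcal{O}_a} \to \Set$ is the same as extending $M$ along the canonical inclusion $\burnside_{\mathcal{O}_a} \hookrightarrow \bitburnside$ to a product-preserving functor on the bispan category: the commutation relations among multiplications, norms, restrictions, and transfers that make up the coherence data are precisely the composition relations --- the distributivity and exponential diagrams --- built into $\bitburnside$, and conversely every bi-incomplete Tambara functor carries exactly this structure by restricting its bispan functoriality. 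Hence the $\mathcal{O}_m$-commutative monoids are exactly the $\compPair$-Tambara functors, which is the second sentence. The crux of the whole argument is the verification in Step 1: that $\mathcal{O}_a$ is closed under $\mathcal{O}_m$-norms and that the norm functors satisfy the full package of $\mathcal{O}_m$-symmetric monoidal coherences; the combinatorial reformulation of compatibility is exactly what turns each of these into a finite check on orbits rather than an unwinding of the abstract compatibility condition of Blumberg and Hill \cite{blumberghillbiincomplete}.
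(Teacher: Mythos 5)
Your Steps 1 and 2 are essentially the route the paper takes: the norm is built at the level of incomplete Burnside categories by coinduction (equivalently, the dependent product along the admissible projection), the fact that the transfer leg of the output stays in $\mathcal{O}_a$ is supplied by the combinatorial characterization of compatibility (this is Corollary \ref{cor:firstConjecture}, resolving Conjecture 7.4 of \cite{blumberghillbiincomplete} --- note it is \emph{not} literally the definition of compatibility, which only controls admissible \emph{sets}, so the strengthened Proposition \ref{prop:systemImpliesCat} is genuinely needed here), the double coset formula is checked on sets and transported through left Kan extension, and product-preservation of the Kan-extended norm is the Borceux--Day multilinearity argument. So far, so good.

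Step 3, however, is a genuine gap: the assertion that ``specifying an $\mathcal{O}_m$-commutative monoid structure on $M$ is the same as extending $M$ along $\burnside^G_{\mathcal{O}_a}\hookrightarrow\bitburnside$'' is the theorem itself, not something that follows from the universal property of Day convolution plus Yoneda. Two concrete obstructions. First, the norm multiplications of Definition \ref{defn:coherence} are maps $N^H_KR^G_K(M)\to R^G_H(M)$ whose source is the \emph{Mackey-theoretic} norm (left Kan extension along coinduction of span categories), whereas the norms of $\bitburnside$ are governed by the \emph{Tambara-theoretic} norm (left Kan extension along induction of polynomial categories); relating the two requires the Beck--Chevalley comparison $U_H\mathcal{N}^H_K\cong N^H_KU_K$ of Theorem \ref{thm:normComparison}, whose proof is a nontrivial exactness statement (and where the paper corrects an oversight in Hoyer's argument). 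Without it you cannot even write down the norm multiplications attached to a Tambara functor, nor identify the external norms $\nu^H_K$ extracted from a monoid with bispan functoriality. Second, and more seriously, in the converse direction one must verify that the external norms of an arbitrary $\mathcal{O}_m$-commutative monoid satisfy the exponential-diagram interchange with transfers, i.e.\ $\nu^H_K T^K_L = T_{\gamma}\nu_{\beta}R_{\alpha}$. This is not encoded formally by the coherence data; the paper proves it (Proposition \ref{prop:monoidsAreTambara}) by induction on $|G|$, using surjectivity of $\mu^G_L\colon N^G_LR^G_L(M)\to M$ (forced by the triangle identity) to write every element of $M(G/L)$ as the image of an element of a Tambara functor $\mathcal{N}^G_LS'$, and then transporting the known relation along that map. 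Some argument of this kind --- or a genuinely different proof that the coherence conditions of Definition \ref{defn:coherence} generate all composition relations of $\bitburnside$ --- is required; as written, your Step 3 asserts the conclusion rather than proving it.
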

	
	The proof of Theorem \ref{thm:introThm1} requires understanding the interplay of various incomplete systems of norms and transfers. Because the norms and transfers of a bi-incomplete Tambara functor must interact in non-trivial ways, we cannot pick arbitrary collections of available norms and transfers and expect to get a coherent and useful algebraic object.  We keep track of available norms and transfers of a bi-incomplete Tambara functor by using \emph{indexing categories}, written $\mathcal{O}_{norm}$ and $\mathcal{O}_{transfer}$, which are certain subcategories of the category of finite $G$-sets.  Indexing categories were first studied in the context of incomplete Tambara functors by Blumberg and Hill\cite{BlumbergHillIncomplete}.   A pair of indexing categories $(\mathcal{O}_{norm},\mathcal{O}_{transfer})$  are called \emph{compatible} if they are able to index a bi-incomplete Tambara functor.  
	
	The first step toward proving Theorem \ref{thm:introThm1} is to show that every compatible pair of indexing categories determines an equivariant symmetric monoidal structure on the category of incomplete Mackey functors with transfers indexed by $\mathcal{O}_{transfer}$.  The construction requires us to define \emph{algebraic norm functors} which allow us to move between $K$-Mackey functors and $H$-Mackey functors for subgroups $K\leq H$.  These norm functors are an algebraic analogue of the famous Hill--Hopkins--Ravenel norm on equivariant spectra.  In the setting of ordinary Mackey functors, the algebraic norms were first constructed by Mazur in the case $G=C_{p^n}$, and later for all finite groups by Hoyer.  These norms play an important role in other constructions in equivariant algebra such as generalizations of topological Hochschild homology for Green functors \cite{GreenWitt}.
	
	Constructing the algebraic norm functors requires understanding exactly the conditions under which two pairs of indexing categories can be compatible.  Blumberg and Hill have provided one characterization which depends heavily on computation of the coinduction functors $\map_K(H,-)\colon\Set^K\to \Set^H$ for subgroups $K\leq H\leq G$. Unfortunately it is, in practice, not a simple task to determine the orbit decomposition coinduced sets.  Much of the difficulty arises from the fact that the coinduction functor does not preserve disjoint unions.
	
	The second main theorem of this paper provides a way to check that two indexing categories are compatible without needing to compute any coinduction functors.  We achieve this by reframing the problem in terms of \emph{transfer systems}, a tool developed independently by Rubin and Balchin--Barnes--Roitzheim to give an alternate description of the combinatorics of indexing categories \cite{BalchinBarnesRoitzheim, rubin_detecting}.  In Definition \ref{defn:compatibleTransferSystems} we give a notion of a compatible pair of transfer systems which can be checked without any difficult computations.  Our second main theorem is:
	
	\begin{introthm}[Appears below as Theorem \ref{thm:catEqualSystem}]\label{thm:introThm2}
		Compatibility of a pair of indexing categories is equivalent to a compatibility condition (Definition \ref{defn:compatibleTransferSystems}) on the associated pair of transfer systems. 
	\end{introthm}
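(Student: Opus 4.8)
The plan is to run everything through the dictionary between indexing categories and transfer systems of Rubin and Balchin--Barnes--Roitzheim \cite{rubin_detecting, BalchinBarnesRoitzheim}: under this equivalence, a morphism of orbits $G/K \to G/H$ lies in an indexing category precisely when the corresponding relation (a conjugate of) $K \to H$ holds in the associated transfer system, and the closure properties of an indexing category (under coproducts, pullbacks, subobjects, composition) translate into the restriction- and conjugation-stability axioms for transfer systems. With this in hand, Theorem \ref{thm:catEqualSystem} becomes the assertion that the Blumberg--Hill compatibility condition on the pair $\compPair$ is carried to the condition of Definition \ref{defn:compatibleTransferSystems} on $\tCompPair$, so the whole argument reduces to comparing two conditions on chains of subgroups.

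First I would reduce the Blumberg--Hill condition to orbits. Their notion of compatibility asks that $\mathcal{O}_a$ be closed under forming the dependent products (coinductions) $\map_K(H,-)\colon \Set^K \to \Set^H$ of its own morphisms along morphisms of $\mathcal{O}_m$; because dependent products, together with the operations defining an indexing category, are determined by their effect on orbits, it suffices to test this when every $G$-set in sight is an orbit. Concretely, the condition becomes: for all $L \leq K \leq H \leq G$ with $(G/K \to G/H) \in \mathcal{O}_m$ and $L$ admissible for $\mathcal{O}_a$ inside $K$, every orbit summand of the $H$-set $\map_K(H, K/L)$ contributes a morphism lying in $\mathcal{O}_a$. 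The remaining work is to analyze the orbit decomposition --- equivalently the point-stabilizers --- of these coinduced sets, and to show that controlling it is exactly what Definition \ref{defn:compatibleTransferSystems} does.

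The technical heart, and the main obstacle, is that $\map_K(H,-)$ does not preserve disjoint unions, so the orbits of $\map_K(H, K/L)$ are \emph{not} assembled additively from simpler pieces; this is precisely the computational difficulty that Blumberg and Hill confront case by case. The strategy is to avoid computing the full decomposition and instead extract only what is needed. For the implication ``Blumberg--Hill compatibility $\Rightarrow$ Definition \ref{defn:compatibleTransferSystems}'' one specializes: singling out the distinguished (smallest) stabilizer appearing in $\map_K(H, K/L)$ --- described by a double-coset bookkeeping over $K \backslash H$ --- recovers exactly the generating instances of the combinatorial condition. For the converse, I would argue by induction on $[H:K]$ (the length of the norm being iterated): restricting $\map_K(H, K/L)$ along the proper subgroups of $H$ and invoking the inductive hypothesis, together with the restriction- and conjugation-closure of transfer systems and the transitivity built into an indexing category, shows that every stabilizer of $\map_K(H, K/L)$ is built, via joins in the subgroup lattice and iterated restriction, from stabilizers of coinductions of strictly smaller index --- each of which is governed by Definition \ref{defn:compatibleTransferSystems}. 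Thus once the finitely many subgroup-level instances hold, the full Blumberg--Hill closure condition follows, and translating back through the dictionary completes the proof.
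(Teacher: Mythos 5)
Your overall framing---translating everything into transfer systems and analyzing stabilizers of coinduced sets---matches the paper, and your sketch of the forward direction (Blumberg--Hill compatibility implies Definition \ref{defn:compatibleTransferSystems}) is in the right spirit: the paper likewise produces elements of coinduced sets with prescribed stabilizers via double cosets, though it needs an induction on the index $[A:C]$, the special case $A=BC$ handled separately, and the trick of coinducing the \emph{trivial} $B$-set on the double cosets $B\backslash A/C$, none of which your sketch supplies. The genuine gaps are elsewhere.

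First, the reduction ``it suffices to test this when every $G$-set in sight is an orbit'' is invalid for the $K$-set being coinduced: since $\map_K(H,-)$ does not preserve disjoint unions, admissibility of $\map_K(H,K/L)$ for each orbit $K/L$ of an admissible $T$ does not imply admissibility of $\map_K(H,T)$. You name this exact difficulty and then make the reduction anyway. Second, and more fundamentally, your converse direction has no working mechanism. Knowing, via the double coset formula and an inductive hypothesis, that the restrictions of $\map_K(H,T)$ to proper subgroups $J< H$ are admissible only yields relations of the form $J\cap H_{\alpha}\leq_{\mathcal{T}_a} J\cap H_{f\circ\alpha}$ for stabilizers $H_{\alpha}$; condition (2) of Definition \ref{defn:compatibleTransferSystems} can promote such a relation up to $H_{f\circ\alpha}$ only if one exhibits a subgroup $B\leq_{\mathcal{T}_m}H_{f\circ\alpha}$ with $B\cap H_{\alpha}\leq_{\mathcal{T}_a}B$, and nothing in your induction produces one (if $K$ is maximal in $H$ with trivial core there are not even intermediate subgroups to induct through). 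The paper's key idea, absent from your proposal, is to take $B=\core_H(K)\cap H_{f\circ\alpha}$: Proposition \ref{coreLemma} supplies the required $\mathcal{T}_m$-edge, and normality of the core makes the identification $\core_H(K)\cap H_{\alpha}=\core_H(K)\cap\bigcap_{h\in H}h^{-1}K_{\alpha(h)}h$ possible, which is what verifies the remaining hypothesis of Definition \ref{defn:compatibleTransferSystems} and closes the diamond. Notably, that argument applies directly to an arbitrary map $f\colon S\to T$ in $i_K^*\mathcal{O}_a$, so no induction on $[H:K]$ and no reduction to orbit sources is needed at all.
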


	Theorem \ref{thm:introThm2} provides enough understanding of compatible pairs to define the norm functors, but it is also of independent interest.  Important examples of compatible pairs of indexing categories arise naturally from equivariant Steiner and linear isometries operads indexed on $G$-universes.  Rubin has, in some cases, classified the kinds of indexing categories that can arise from the equivariant Steiner operads, but a classification of those determined by linear isometries operads remains unknown \cite{rubin_detecting}.  Theorem \ref{thm:introThm2} could provide an avenue for understanding the linear isometries operads via their compatibility with the Steiner operads.
	
	The paper is organized as follows.  In Section \ref{section:indexingCategories} we review the relevant background on indexing systems and indexing categories.  Most of this section is review, although there are some new results that show how one can make use of the normal core of a subgroup $H\leq G$ when analyzing indexing categories.  These new results play a key role in the characterization of compatible transfer systems.
	
	In Section \ref{section:biincomplete}, we recall the definition of bi-incomplete Tambara functors from \cite{blumberghillbiincomplete} and provide examples.  In Section \ref{section:transferSystems} we discuss transfer systems and define the notion of compatible transfer systems, proving it agrees with that of indexing categories.  In Section \ref{section:GCommMonoids}, we review the definitions of $G$-symmetric monoidal Mackey functors and $\mathcal{O}$-commutative monoids which are our model for equivariant symmetric monoidal categories.  
	
	In Section \ref{section:biincompleteAsMonoids} we use the results of Section \ref{section:transferSystems} to construct the norm functors on incomplete Mackey functors. These norms are then used to prove that incomplete Mackey functors assemble into a $G$-symmetric monoidal Mackey functor.  
	
	In Section \ref{section:TambaraNorms} we define a norm functor for Tambara functors and provide a comparison between this and the norms of Mackey functors. This comparison is essential for translating between bi-incomplete Tambara functors and the equivariant commutative monoids of incomplete Mackey functors.  We use this in Section \ref{section:MainTheorem} to prove our generalized Hoyer--Mazur Theorem.  The paper concludes in Section \ref{section:technicalProofs} which contains the proofs of some technical lemmas used in the paper.
	
	\textbf{Acknowledgments:} The author would like to thank Anna Marie Bohmann for many helpful conversations and suggestions on how to improve the paper.  He would also like to thank Andy Jarnevic and Aidan Lorenz for conversations that helped to improve the exposition.  This research was partially supported by NSF-DMS grant 2104300.
	
	\section{Indexing Categories}\label{section:indexingCategories}
	
	Throughout this paper, systems of transfers and norms for Mackey and Tambara functors for a finite group $G$ are indexed by \emph{indexing categories}.  We begin this section by recalling the definitions and some basic examples.  At the end of the section we prove some new results regarding the interaction of indexing categories with the normal cores of subgroups $H$ of a group $G$.  
	
	\begin{defn}
		An \emph{indexing category} $\mathcal{O}$ is a wide, pullback stable, finite coproduct complete subcategory of $\Set^G$.  For any $H\leq G$, an $H$-set $X$ is called \emph{$\mathcal{O}$-admissible} if there is a morphism $G\times_H X \to G/H$ in $\mathcal{O}$.
	\end{defn}

	\begin{example}
		For any group $G$ we have the complete indexing category $\mathcal{O}^{gen}=\Set^G$.  
	\end{example}

	\begin{example}\label{ex:trivialIndexing}
		For any group $G$ we have the trivial indexing category $\mathcal{O}^{tr}$.  A map $f\colon X\to Y$ is in $\mathcal{O}^{tr}$ if and only if for any orbit $X_0\subset X$, the restriction $f\colon X_0\to Y$ is an isomorphism onto its image.  Essentially, the only maps in $\mathcal{O}^{tr}$ are fold maps and inclusions.
	\end{example}

	\begin{example}\label{ex:restrictedIndexingCat}
		Let $H\leq G$ be a subgroup.  Every $G$-indexing category $\mathcal{O}$ determines an $H$-indexing category $i_H^*\mathcal{O}$.  A map of $H$-sets $f\colon S\to T$ is in $i_H^*\mathcal{O}$ if the induced $G$-map $G\times_{H}f\colon G\times_HS\to G\times_HT$ is in $\mathcal{O}$.  It follows from the definitions that for any $K\leq H$, the $i_H^*\mathcal{O}$-admissible $K$-sets are the same at the $\mathcal{O}$-admissible $K$-sets.  Note that for any $K\leq H$, the induction functor $H\times_K(-)\colon \Set^K\to \Set^H$ restricts to a functor $H\times_K(-)\colon i_K^*\mathcal{O}\to i^*_H\mathcal{O}$.
	\end{example}

	Our definition of an $\mathcal{O}$-admissible set is not exactly the same as one given in Blumberg and Hill \cite{blumberghillbiincomplete}, though it is equivalent.  The difference is that Blumberg and Hill define admissibility for \emph{indexing systems}, which are structures that carry data equivalent to that of indexing categories.  Indexing systems play no explicit role in this paper, and so we have worded our statements in terms of indexing categories. In keeping with this, the next three lemmas are not new, although we include proofs because we do not know a reference in which they are stated in this language.

	\begin{lem}[Proposition 3.1 of \cite{BlumbergHillIncomplete}]\label{closedUnderSubobjects}
		Suppose $\mathcal{O}$ is an indexing category and $H\leq G$ is a subgroup. If $X$ is an $\mathcal{O}$-admissible $H$-set then every orbit of $X_0\subset X$ is also $\mathcal{O}$-admissible.
	\end{lem}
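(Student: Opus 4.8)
The plan is to realize the chosen orbit as a coproduct summand and then use only that $\mathcal{O}$ is closed under composition. First I would write $X \cong X_0 \sqcup X'$ as $H$-sets, where $X_0$ is the given orbit and $X'$ is the union of the remaining orbits. The induction functor $G \times_H (-) \colon \Set^H \to \Set^G$ is left adjoint to restriction, hence preserves coproducts, so
\[
G \times_H X \;\cong\; (G \times_H X_0) \;\sqcup\; (G \times_H X')
\]
in $\Set^G$, and I would write $\iota \colon G \times_H X_0 \hookrightarrow G \times_H X$ for the coproduct inclusion. Because $\mathcal{O}$ is a finite coproduct complete subcategory of $\Set^G$, this coproduct is also a coproduct in $\mathcal{O}$; in particular $\iota$ is a morphism of $\mathcal{O}$. (As a sanity check, summand inclusions lie even in the trivial indexing category of Example \ref{ex:trivialIndexing}.)

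Next, since $X$ is $\mathcal{O}$-admissible there is \emph{some} morphism $f \colon G \times_H X \to G/H$ in $\mathcal{O}$. As $\mathcal{O}$ is a subcategory it is closed under composition, so $f \circ \iota \colon G \times_H X_0 \to G/H$ again lies in $\mathcal{O}$. By definition this exhibits $X_0$ as an $\mathcal{O}$-admissible $H$-set, and as $X_0$ was an arbitrary orbit of $X$, this would complete the proof.

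The whole argument is formal once the definitions are unwound; the only point needing a moment's care is that "finite coproduct complete subcategory" is what supplies $\iota \in \mathcal{O}$, together with the observation that admissibility requires only the \emph{existence} of a map to $G/H$ in $\mathcal{O}$ — which is precisely what allows us to postcompose the fixed map $f$ with $\iota$ without having to control how $f$ interacts with the decomposition of $X$. I do not anticipate a genuine obstacle here.
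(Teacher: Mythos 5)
Your proof is correct and follows essentially the same route as the paper: both arguments compose the summand inclusion $G\times_H X_0 \hookrightarrow G\times_H X$ with the given admissibility map $f\colon G\times_H X\to G/H$. The only difference is in how the inclusion is shown to lie in $\mathcal{O}$ --- the paper cites Proposition 3.1 of Blumberg--Hill (all monomorphisms of $\Set^G$ are in $\mathcal{O}$), whereas you rederive this directly from finite coproduct completeness, which is a valid unwinding of the same fact.
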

	\begin{proof}
		
		By assumption, we have a map $f\colon G\times_H X\to G/H$ in $\mathcal{O}$ which allows us to construct the composite
		\[
			G\times_HX_0\to G\times_H X\xrightarrow{f} G/H
		\]
		where the first map is the inclusion.  By Proposition 3.1 of \cite{BlumbergHillIncomplete}, all monomorphisms of $\Set^G$ are in $\mathcal{O}$ and thus we have constructed a map $G\times_HX_0\to G/H$ that is in $\mathcal{O}$.
	\end{proof}
	
	\begin{lem}[Proposition 3.3 of \cite{rubin_detecting}] \label{intersectionLemma}
		Let $\mathcal{O}$ be any indexing category. If $H/K$ is $\mathcal{O}$-admissible, then for all subgroups $L\leq G$ we have that $(H\cap L)/(K\cap L)$ is also $\mathcal{O}$-admissible.
	\end{lem}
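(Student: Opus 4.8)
The plan is to translate admissibility into a statement about canonical projections of $G$-orbits, and then to produce the desired projection as a composite of a pullback leg (available by pullback stability) with the inclusion of a summand (available since all monomorphisms lie in $\mathcal{O}$).

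First I would record the translation. Since $K\leq H$, there is a canonical $G$-isomorphism $G\times_H(H/K)\cong G/K$ carrying the structure map to the canonical projection $\pi_H^K\colon G/K\to G/H$, so ``$H/K$ is $\mathcal{O}$-admissible'' is equivalent to ``$\pi_H^K\in\mathcal{O}$''. Writing $H'=H\cap L$ and $K'=K\cap L$ --- and noting $K'\leq H'$ because $K\leq H$ --- the same translation shows that ``$(H\cap L)/(K\cap L)$ is $\mathcal{O}$-admissible'' is equivalent to ``$\pi_{H'}^{K'}\colon G/K'\to G/H'$ lies in $\mathcal{O}$''. So it suffices to deduce $\pi_{H'}^{K'}\in\mathcal{O}$ from $\pi_H^K\in\mathcal{O}$.

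Next I would form the pullback
\[
\begin{tikzcd}
P \ar[r] \ar[d, "p"'] & G/K \ar[d, "\pi_H^K"] \\
G/H' \ar[r, "\pi_H^{H'}"'] & G/H
\end{tikzcd}
\]
in $\Set^G$, where $\pi_H^{H'}$ is the canonical projection. As $\mathcal{O}$ is pullback stable and $\pi_H^K\in\mathcal{O}$, the leg $p\colon P\to G/H'$ lies in $\mathcal{O}$. I would then isolate the summand of $P=\{(gH',g'K) : gH=g'H\}$ through the point $x=(eH',eK)$: its stabilizer is $H'\cap K = L\cap K = K'$, so orbit--stabilizer gives a $G$-isomorphism $G/K'\xrightarrow{\ \cong\ }G\cdot x\subseteq P$, $gK'\mapsto(gH',gK)$, under which the restriction of $p$ to $G\cdot x$ is precisely $\pi_{H'}^{K'}$.

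Finally, the inclusion $\iota\colon G\cdot x\hookrightarrow P$ of this summand is a monomorphism, hence lies in $\mathcal{O}$ by Proposition 3.1 of \cite{BlumbergHillIncomplete} (cited in the proof of Lemma \ref{closedUnderSubobjects}); since $\mathcal{O}$ is a subcategory it is closed under composition, so $p\circ\iota=\pi_{H'}^{K'}\in\mathcal{O}$, which is what we wanted. The only step demanding any care is the identification of the distinguished summand of $P$ and the check that $p$ restricts there to the honest canonical projection (and not to some conjugated variant); this is an elementary orbit--stabilizer computation and, in particular, requires no full double-coset decomposition of $P$.
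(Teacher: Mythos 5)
Your proof is correct and follows essentially the same route as the paper's: form the pullback of the admissible projection along $G/(H\cap L)\to G/H$, use pullback stability to put the other leg in $\mathcal{O}$, identify the orbit of $(eH', eK)$ via its stabilizer $K\cap L$, and conclude by closure under suborbits (monomorphisms). The only cosmetic difference is that the paper first reduces to the case $L\leq H$ before pulling back, whereas you pull back along $G/(H\cap L)\to G/H$ directly; these are the same argument.
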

	\begin{proof}
		Since $H\cap L = H\cap (H\cap L)$ we can replace $L$ by $H\cap L$ and reduce to the case where $L \leq H$.  Unwinding the definitions, it suffices to construct a map $G/(L\cap K)\to G/L$ in $\mathcal{O}$.  By assumption, we have a map $f\colon G/K\to G/H$ in $\mathcal{O}$.  Consider the following pullback diagram:
		\[
			\begin{tikzcd}
				G/L\times_{G/H}G/K \ar{r}\ar["p"]{d} & G/K\ar["f"]{d}\\
				G/L \ar{r} & G/H
			\end{tikzcd}
		\]
		The map $p$ must be in $\mathcal{O}$ by pullback stability and the fact that $f$ is in $\mathcal{O}$.  By Lemma \ref{closedUnderSubobjects}, we are done if we can show the pullback $G/L\times_{G/H}G/K$ has an orbit isomorphic to $G/(L\cap K)$.  This is immediate since the element $(eK,eL)$ in the pullback has stabilizer exactly $L\cap K$. 
	\end{proof}

	When applying Lemma \ref{intersectionLemma} we say $(H\cap L)/(K\cap L)$ is obtained from $H/K$ by intersection with $L$.  In this language, the lemma can be summarized by saying the collection of $\mathcal{O}$-admissible sets is closed under intersection with subgroups.
	
	Our next lemma states that the induction functors associated to subgroups $K\leq H$, with $H/K$ an $\mathcal{O}$-admissible $H$-set, give a function from admissible $K$-sets to admissible $H$-sets.  Blumberg and Hill refer to this property as closure under self-induction.
	
	\begin{lem}[Closure under self-induction]\label{lem:closureUnderSelfInduction}
		Suppose $K\leq H$ are subgroups and $H/K$ is an $\mathcal{O}$-admissible $H$-set.  If $T$ is an $\mathcal{O}$-admissible $K$-set, then $H\times_K T$ is an $\mathcal{O}$-admissible $H$-set.
	\end{lem}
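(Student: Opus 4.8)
The plan is to unwind the definition of $\mathcal{O}$-admissibility and reduce everything to the fact that $\mathcal{O}$, being a subcategory of $\Set^G$, is closed under composition. By definition, proving that $H\times_K T$ is an $\mathcal{O}$-admissible $H$-set amounts to producing a single morphism $G\times_H(H\times_K T)\to G/H$ that lies in $\mathcal{O}$, and I would build this morphism as a composite of two maps supplied by the hypotheses.

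First I would record the transitivity-of-induction isomorphism of $G$-sets $G\times_H(H\times_K T)\cong G\times_K T$, together with its special case $G\times_H(H/K)\cong G/K$. These isomorphisms are exactly what allow one to pass between the ``$H$-level'' and ``$K$-level'' phrasings of admissibility. Next I would extract the two maps guaranteed to us: since the $K$-set $T$ is $\mathcal{O}$-admissible there is a morphism $\alpha\colon G\times_K T\to G/K$ in $\mathcal{O}$, and since the $H$-set $H/K$ is $\mathcal{O}$-admissible there is a morphism $G\times_H(H/K)\to G/H$ in $\mathcal{O}$, which under the isomorphism above we regard as a morphism $\beta\colon G/K\to G/H$ in $\mathcal{O}$.

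Finally I would assemble the composite
\[
G\times_H(H\times_K T)\ \xrightarrow{\ \cong\ }\ G\times_K T\ \xrightarrow{\ \alpha\ }\ G/K\ \xrightarrow{\ \beta\ }\ G/H .
\]
Each arrow lies in $\mathcal{O}$: the displayed isomorphism does because all monomorphisms of $\Set^G$ lie in $\mathcal{O}$ (Proposition 3.1 of \cite{BlumbergHillIncomplete}, already invoked in the proof of Lemma \ref{closedUnderSubobjects}), and $\alpha,\beta$ do by construction; since $\mathcal{O}$ is a subcategory it is closed under composition, so the composite lies in $\mathcal{O}$. This composite is a morphism of the form demanded by the definition, so $H\times_K T$ is $\mathcal{O}$-admissible.

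I do not expect a substantive obstacle here: the only points needing care are the bookkeeping of the two induction isomorphisms and checking that the resulting composite is literally the morphism whose existence the definition of admissibility of $H\times_K T$ requires. The reason the argument is so short is precisely that phrasing admissibility in terms of maps to $G/H$ in $\mathcal{O}$ builds closure under self-induction into the axioms, which is consistent with the remark preceding the lemma that the statement is not new but that we lack a reference stating it in this language.
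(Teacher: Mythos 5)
Your proof is correct and follows essentially the same route as the paper's: both extract a map $G\times_K T\to G/K$ from admissibility of $T$ and a map $G/K\cong G\times_H(H/K)\to G/H$ from admissibility of $H/K$, compose them, and conclude via the transitivity-of-induction isomorphism $G\times_H(H\times_K T)\cong G\times_K T$. Your justification that the connecting isomorphisms lie in $\mathcal{O}$ (via closure under monomorphisms) is a valid, slightly roundabout version of the paper's appeal to the fact that indexing categories contain all isomorphisms.
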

	\begin{proof}
		The assumptions guarantee maps $f\colon G\times_K T\to  G/K$ and $g\colon G\times_H(H/K)\to G/H$ in $\mathcal{O}$.  Since there is an isomorphism $i\colon G/K\cong G\times_H(H/K)$, which must be in $\mathcal{O}$, we can for the composite $g\circ i \circ f\colon G\times_K T\to G/H$, which is a morphism in $\mathcal{O}$.  The result now follows from the isomorphism $G\times_K T\cong G\times_H(H\times_K T)$. 
	\end{proof}

	If $S$ is a $G$-set, we write $G_s$ for the stabilizer of $s$ in $G$.

	\begin{lem}[See Section 3 of\cite{BlumbergHillIncomplete}]\label{lem:stabilizerQuotients}
		A map $f\colon S\to T$ is in $\mathcal{O}$ if and only if for any $s\in S$ we have that $G_{f(s)}/G_S$ is an $\mathcal{O}$-admissible $G_{f(s)}$-set.
	\end{lem}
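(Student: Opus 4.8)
The plan is to reduce the statement, by bookkeeping with coproducts and monomorphisms, to the case in which $f$ is a map between single orbits, and then to identify that case with the definition of admissibility.

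First I would record three properties of $\mathcal{O}$. Being wide, $\mathcal{O}$ contains every isomorphism; by Proposition 3.1 of \cite{BlumbergHillIncomplete} (already invoked in the proof of Lemma \ref{closedUnderSubobjects}) it contains every monomorphism; and all fold maps lie in the trivial indexing category $\mathcal{O}^{tr}$ of Example \ref{ex:trivialIndexing}, hence in $\mathcal{O}$ since $\mathcal{O}^{tr}$ is the minimal indexing category. From these I deduce two cancellation facts. First, if $b\circ a$ lies in $\mathcal{O}$ and $b$ is a monomorphism, then $a$ lies in $\mathcal{O}$, since up to the evident isomorphism $a$ is the pullback of $b\circ a$ along $b$, so this is pullback stability. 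Second, for a finite family of maps $g_i\colon X_i\to Y_i$, the coproduct $\bigsqcup_i g_i$ lies in $\mathcal{O}$ if and only if each $g_i$ does: the reverse implication is finite coproduct completeness, and the forward implication follows from the first fact by observing that the inclusion $Y_i\hookrightarrow\bigsqcup_j Y_j$ composed with $g_i$ equals the inclusion $X_i\hookrightarrow\bigsqcup_j X_j$ composed with $\bigsqcup_j g_j$, which lies in $\mathcal{O}$.

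Now for the reduction. Write $T=\bigsqcup_j T_j$ as a disjoint union of orbits with chosen basepoints $t_j$; then $S=\bigsqcup_j f^{-1}(T_j)$, and under the canonical isomorphisms $f^{-1}(T_j)\cong G\times_{G_{t_j}}f^{-1}(t_j)$ and $T_j\cong G/G_{t_j}$, the map $f$ is carried to the coproduct $\bigsqcup_j\rho_j$, where $\rho_j\colon G\times_{G_{t_j}}f^{-1}(t_j)\to G/G_{t_j}$ is the canonical map of the $G_{t_j}$-set $f^{-1}(t_j)$. By the second cancellation fact, $f$ lies in $\mathcal{O}$ if and only if every $\rho_j$ does. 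Next, for any $H\leq G$ and any $H$-set $Y=\bigsqcup_i H/K_i$, the canonical map $G\times_H Y\to G/H$ is identified with the fold map $\bigsqcup_i G/H\to G/H$ precomposed with $\bigsqcup_i(\pi_i\colon G/K_i\to G/H)$, where $\pi_i$ is the projection; since each $\pi_i$ is in turn the restriction of this canonical map along the summand inclusion $G/K_i\hookrightarrow G\times_H Y$, the canonical map lies in $\mathcal{O}$ if and only if every $\pi_i$ does. Because the pairs $(G_s,G_{f(s)})$ for $s\in S$ are exactly the pairs $(K,G_t)$ with $t\in T$ and $K$ a point stabilizer of the fibre $f^{-1}(t)$, the lemma now becomes equivalent to the assertion: for $K\leq H\leq G$, the projection $G/K\to G/H$ lies in $\mathcal{O}$ if and only if $H/K$ is an $\mathcal{O}$-admissible $H$-set.

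The direction of this assertion that reads ``projection in $\mathcal{O}$ implies $H/K$ admissible'' is immediate --- under $G\times_H(H/K)\cong G/K$ the canonical structure map becomes exactly that projection --- and it yields the ``only if'' half of the lemma. The converse is the substantive point and yields the ``if'' half. Given some $\psi\colon G\times_H(H/K)\to G/H$ in $\mathcal{O}$, transport it along $G\times_H(H/K)\cong G/K$ to a $G$-map $G/K\to G/H$, necessarily of the form $xK\mapsto xg_0H$ with $g_0^{-1}Kg_0\leq H$; precomposing with the isomorphism $G/(g_0^{-1}Kg_0)\xrightarrow{\cong}G/K$, $x(g_0^{-1}Kg_0)\mapsto xg_0^{-1}K$, straightens $\psi$ into the projection $G/(g_0^{-1}Kg_0)\to G/H$, so that projection lies in $\mathcal{O}$. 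I expect the main obstacle to be the final passage from the $G$-conjugate $g_0^{-1}Kg_0$ back to $K$ itself: one needs that $\mathcal{O}$-admissibility of $H/K$ does not depend on which $G$-conjugate of $K$ lying in $H$ is chosen, and it is here that the compatibility of $\mathcal{O}$ with conjugation, Lemma \ref{intersectionLemma}, and the normal-core considerations of this section are brought to bear. (If one reads ``$\mathcal{O}$-admissible'' as the requirement that the canonical structure map $G\times_H X\to G/H$ itself lie in $\mathcal{O}$, this wrinkle evaporates and the straightening above already finishes the argument.)
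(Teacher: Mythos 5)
The paper offers no proof of this lemma (it is quoted from Section 3 of \cite{BlumbergHillIncomplete}), so I can only assess your argument on its own terms. The bookkeeping is correct and is the standard argument: monomorphisms and fold maps lie in every indexing category, your two cancellation facts follow from pullback stability and finite coproduct completeness, and the orbit decomposition correctly reduces the lemma to the assertion that, for $K\leq H$, the projection $G/K\to G/H$ lies in $\mathcal{O}$ if and only if $H/K$ is $\mathcal{O}$-admissible. The ``only if'' half of the lemma is then complete. (One small slip: wideness gives all \emph{objects}, not all isomorphisms; isomorphisms lie in $\mathcal{O}$ because they are monomorphisms, via the cited Proposition 3.1 of \cite{BlumbergHillIncomplete}.)

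The ``obstacle'' you flag at the end, however, is not one you can get past: under the literal reading of the definition (``there is \emph{a} morphism $G\times_HX\to G/H$ in $\mathcal{O}$''), admissibility of $H/K$ depends only on the $G$-conjugacy class of $K$, whereas membership of the projection $G/K\to G/H$ in $\mathcal{O}$ does not, and these genuinely diverge. Take $G=\Sigma_4$, $H=D_8$ a Sylow $2$-subgroup, $J=Z(D_8)=\langle(13)(24)\rangle$ and $K=\langle(12)(34)\rangle$: these are $G$-conjugate subgroups of $H$, but since $N_G(H)=H$ and $J$ is central in $H$ while $K$ is not, no conjugation carrying $J$ to $K$ normalizes $H$. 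In the indexing category corresponding to the transfer system generated by $J\leq_{\mathcal{T}}H$, closure under conjugation, intersection and composition never produces $K\leq_{\mathcal{T}}H$ (conjugation only yields $K\leq_{\mathcal{T}}H^{g}$ for $g\notin N_G(H)$, and intersecting with $H$ lands you at $K\leq_{\mathcal{T}}H\cap H^{g}\lneq H$, which is exactly where your appeal to Lemmas \ref{intersectionLemma}, \ref{closureUnderConjugation} and Proposition \ref{coreLemma} stalls); yet $G/K\cong G/J$, so a non-projection morphism $G/K\to G/H$ \emph{does} lie in $\mathcal{O}$, and $H/K$ is ``admissible'' in the existential sense while the transfer $G/K\to G/H$ is absent. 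So the needed independence of admissibility from the choice of $G$-conjugate is false, and the ``if'' direction fails under that reading. The correct resolution is precisely your closing parenthetical: $\mathcal{O}$-admissibility must be read as the requirement that the \emph{canonical} structure map $G\times_HX\to G/H$ lie in $\mathcal{O}$ (this is what Blumberg and Hill's indexing systems record, and it is the reading under which this lemma and the surrounding results are correct). With that reading no straightening is needed and your proof is complete.
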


	In the remainder of this section we make a few new observations on the structure of indexing categories.  Of particular interest is the interaction between indexing categories and the \emph{cores} (Definition \ref{defn:core} below) of various subgroups $H\leq G$.  We begin with a modest generalization of Lemma \ref{intersectionLemma}.
	
	\begin{cor}\label{intersectionLemma2}
		If $H_1/K_1$ and $H_2/K_2$ are two $\mathcal{O}$-admissible sets, then $(H_1\cap H_2)/(K_1\cap K_2)$ is also $\mathcal{O}$-admissible.
	\end{cor}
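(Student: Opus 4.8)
The plan is to deduce this purely from the two closure properties already in hand: closure of the admissible sets under intersection with subgroups (Lemma \ref{intersectionLemma}) and closure under self-induction (Lemma \ref{lem:closureUnderSelfInduction}). The point to notice first is that Lemma \ref{intersectionLemma} only lets us shrink a \emph{single} admissible quotient $H/K$, so it cannot by itself merge the data coming from $H_1/K_1$ and $H_2/K_2$. The strategy is therefore to use Lemma \ref{intersectionLemma} to push both quotients into a common ambient group and then use self-induction to combine them.

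Concretely, first apply Lemma \ref{intersectionLemma} to $H_1/K_1$ with $L = H_2$ to see that $(H_1\cap H_2)/(K_1\cap H_2)$ is $\mathcal{O}$-admissible, and to $H_2/K_2$ with $L = H_1$ to see that $(H_1\cap H_2)/(K_2\cap H_1)$ is $\mathcal{O}$-admissible. Writing $H = H_1\cap H_2$, $A = K_1\cap H_2$, and $B = K_2\cap H_1$, the containments $K_1\le H_1$ and $K_2\le H_2$ give $A\cap B = K_1\cap K_2\cap H_1\cap H_2 = K_1\cap K_2$, so it suffices to prove the self-contained statement: if $H/A$ and $H/B$ are both $\mathcal{O}$-admissible $H$-sets, then $H/(A\cap B)$ is $\mathcal{O}$-admissible. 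For this, apply Lemma \ref{intersectionLemma} once more, to the admissible quotient $H/A$ with $L = B$; since $B\le H\le G$ this shows $B/(A\cap B) = (H\cap B)/(A\cap B)$ is an $\mathcal{O}$-admissible $B$-set. As $H/B$ is an $\mathcal{O}$-admissible $H$-set by hypothesis, Lemma \ref{lem:closureUnderSelfInduction} (applied to the inclusion $B\le H$) then says $H\times_B\bigl(B/(A\cap B)\bigr)$ is an $\mathcal{O}$-admissible $H$-set, and the standard identification $H\times_B\bigl(B/(A\cap B)\bigr)\cong H/(A\cap B)$ completes the argument.

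I expect essentially no analytic difficulty here: the entire content is bookkeeping, namely choosing which subgroup to intersect with at each stage ($H_2$ on the first quotient, $H_1$ on the second, then $B$ on the merged quotient) so that the denominators combine to exactly $K_1\cap K_2$ and so that every subgroup inclusion needed to invoke Lemma \ref{lem:closureUnderSelfInduction} genuinely holds. The one thing worth double-checking carefully is the identity $A\cap B = K_1\cap K_2$, which is where the hypotheses $K_i\le H_i$ are used.

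As an alternative that sidesteps Lemma \ref{lem:closureUnderSelfInduction}, one can reprove the merged statement by the same pullback trick used for Lemma \ref{intersectionLemma} itself: given maps $G/A\to G/H$ and $G/B\to G/H$ in $\mathcal{O}$, form the pullback $G/A\times_{G/H}G/B$, which maps to $G/H$ through a composite of $\mathcal{O}$-maps by pullback stability, and note that the element $(eA,eB)$ has $G$-stabilizer $A\cap B$, so $G/(A\cap B)$ occurs as one of its orbits; Lemma \ref{closedUnderSubobjects} then delivers admissibility of $H/(A\cap B)$. Either route works; I would present the first since it is shorter and reuses the lemmas essentially verbatim.
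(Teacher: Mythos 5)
Your proof is correct and follows essentially the same route as the paper's: both arguments shrink the two admissible quotients using Lemma \ref{intersectionLemma} and then combine them with closure under self-induction (Lemma \ref{lem:closureUnderSelfInduction}). The only difference is bookkeeping: the paper intersects $H_2/K_2$ with $K_1$ directly, producing $(K_1\cap H_2)/(K_1\cap K_2)$ and saving one application of the intersection lemma, whereas you symmetrize first and then intersect a third time; both versions are valid.
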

	\begin{proof}
		Intersecting $H_1/ K_1$ with $H_2$ yields that $(H_1\cap H_2)/(K_1\cap H_2)$ is $\mathcal{O}$-admissible.  Similarly, intersecting $H_2/K_2$ with $K_1$ gives us that $(K_1\cap H_2)/(K_1\cap K_2)$ is $\mathcal{O}$-admissible.  We are now done, as 
		\[
			(H_1\cap H_2)/(K_1\cap K_2) \cong (H_1\cap H_2)\times_{K_1\cap H_2}(K_1\cap H_2)/(K_1\cap K_2)
		\]
		with the right hand side $\mathcal{O}$-admissible by Lemma \ref{lem:closureUnderSelfInduction}.
	\end{proof}

	\begin{defn}\label{defn:core}
		For a subgroup $H\leq G$ the \emph{core} of $H$ in $G$, denoted $\core_G(H)$, is the intersection of all conjugates of $H$.  That is,  
		\[
			\core_G(H) = \bigcap\limits_{g\in G} H^g
		\]	
		where $H^g = gHg^{-1}$.
	\end{defn}

	\begin{remark}
		The core of a subgroup admits several equivalent definitions.  In particular, it can be described as the kernel of the group homomorphism $G\to \Sigma_{|G/H|}$ which realizes the left $G$-action on the set of $H$-cosets.   While the definition above is convenient for indexing categories, especially in light of Corollary \ref{intersectionLemma2}, this second description can be helpful to have in mind later when considering the coinduction functors $\map_H(G,-)\colon \Set^H\to \Set^G$.
	\end{remark}

	\begin{lem}\label{closureUnderConjugation}
		Suppose $H/K$ is an $\mathcal{O}$-admissible $H$-set.  Then for all $g\in G$ the sets $H^g/K^g$ are $\mathcal{O}$-admissible.
	\end{lem}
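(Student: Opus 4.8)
The plan is to use the familiar fact that conjugate subgroups give isomorphic orbit $G$-sets, together with the facts that $\mathcal{O}$ is closed under composition and contains all isomorphisms of $\Set^G$. After the identification $G\times_H(H/K)\cong G/K$, the hypothesis that $H/K$ is $\mathcal{O}$-admissible says precisely that there is a morphism $f\colon G/K\to G/H$ in $\mathcal{O}$. Fixing $g\in G$, we have $K^g\leq H^g$, and, as a $G$-set, $G\times_{H^g}(H^g/K^g)\cong G/K^g$ (the point $[e,eK^g]$ has stabilizer $K^g$). So to conclude that $H^g/K^g$ is $\mathcal{O}$-admissible it suffices to produce a morphism $G/K^g\to G/H^g$ in $\mathcal{O}$.

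First I would write down the standard $G$-equivariant isomorphisms $\varphi\colon G/K^g\to G/K$, $aK^g\mapsto agK$, and $\psi\colon G/H\to G/H^g$, $bH\mapsto bg^{-1}H^g$, and check the routine points: each is well defined (if $a'\in aK^g$ then $a'g\in agK$, and similarly for $\psi$) and $G$-equivariant, and each is a bijection since conjugate subgroups have equal index. The key observation is then that these isomorphisms lie in $\mathcal{O}$: by Proposition 3.1 of \cite{BlumbergHillIncomplete} every indexing category contains all monomorphisms of $\Set^G$ (this is exactly what is invoked in the proof of Lemma \ref{closedUnderSubobjects}), and in particular all isomorphisms.

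Finally, since $\mathcal{O}$ is a subcategory it is closed under composition, so the composite $\psi\circ f\circ\varphi\colon G/K^g\to G/H^g$ is a morphism in $\mathcal{O}$. Precomposing with an isomorphism $G\times_{H^g}(H^g/K^g)\cong G/K^g$ (again a monomorphism, hence in $\mathcal{O}$) exhibits a morphism $G\times_{H^g}(H^g/K^g)\to G/H^g$ in $\mathcal{O}$, which is exactly the assertion that $H^g/K^g$ is $\mathcal{O}$-admissible. There is no substantive obstacle in this argument; the only step needing care is the explicit verification that the two conjugation maps are $G$-equivariant isomorphisms and therefore belong to $\mathcal{O}$, after which the conclusion is immediate.
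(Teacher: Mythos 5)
Your proof is correct and follows essentially the same route as the paper: both arguments rest on the chain of $G$-equivariant isomorphisms $G\times_H(H/K)\cong G/K\cong G/K^g\cong G\times_{H^g}(H^g/K^g)$ (together with $G/H\cong G/H^g$) and the fact that indexing categories contain all isomorphisms. You simply spell out the conjugation maps and their equivariance explicitly, which the paper leaves implicit.
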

	\begin{proof}
		The result follows from noticing that
		\[
			G\times_H H/K \cong G/K \cong G/K^g \cong G\times_{H^g}H^g/K^g
		\]
		and the fact that indexing categories contain all isomorphisms.
	\end{proof}

	\begin{proposition}\label{coreLemma}
		Suppose $\mathcal{O}$ is an $G$-indexing category and $K\leq H\leq G$ is a chain of subgroups.  If $H/K$ is $\mathcal{O}$-admissible then so is $\core_G(H)/\core_G(K)$.  In particular, if $G/H$ is admissible then so is $G/\core_G(H)$.
	\end{proposition}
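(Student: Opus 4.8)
The plan is to combine closure under conjugation (Lemma \ref{closureUnderConjugation}) with an iterated version of the intersection property (Corollary \ref{intersectionLemma2}). Since $H/K$ is $\mathcal{O}$-admissible, Lemma \ref{closureUnderConjugation} immediately gives that $H^g/K^g$ is $\mathcal{O}$-admissible for every $g\in G$; each of these is a genuine quotient because conjugation preserves the containment $K\leq H$, so $K^g\leq H^g$.

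Next I would prove, by induction on $n$, that if $H_1/K_1,\dots,H_n/K_n$ are $\mathcal{O}$-admissible then so is $\left(\bigcap_{i=1}^n H_i\right)\big/\left(\bigcap_{i=1}^n K_i\right)$. The base case $n=1$ is trivial, and the inductive step is a single application of Corollary \ref{intersectionLemma2} to the admissible pair $\left(\bigcap_{i<n}H_i\right)\big/\left(\bigcap_{i<n}K_i\right)$ and $H_n/K_n$; note that the relevant subgroups stay nested at every stage, so the corollary genuinely applies. Applying this to the finite family $\{H^g/K^g\}_{g\in G}$ shows that $\bigcap_{g\in G}H^g\big/\bigcap_{g\in G}K^g$ is $\mathcal{O}$-admissible, and by Definition \ref{defn:core} this quotient is precisely $\core_G(H)/\core_G(K)$.

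For the final ``in particular'' clause I would simply apply the statement just established to the chain $H\leq G\leq G$: since $G$ is normal in itself, $\core_G(G)=G$, so admissibility of $G/H$ yields admissibility of $\core_G(G)/\core_G(H)=G/\core_G(H)$. There is no serious obstacle in this argument; the only point requiring a little care is verifying that Corollary \ref{intersectionLemma2} is applicable at each step of the induction, which holds automatically because intersection preserves the nesting of subgroups, and that $G$ being finite guarantees the intersection defining the core is a finite one.
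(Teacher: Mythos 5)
Your argument is correct and is exactly the paper's proof, which simply states that the result follows from repeated applications of Lemma \ref{closureUnderConjugation} and Corollary \ref{intersectionLemma2}; you have merely spelled out the induction and the finiteness of the intersection explicitly. No gaps.
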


	\begin{proof}
		The result follows from repeated applications of Lemma \ref{closureUnderConjugation} and Corollary \ref{intersectionLemma2}.
	\end{proof}

	Proposition \ref{coreLemma} is useful when working with indexing categories  because it allows us to effectively replace an arbitrary subgroup $H\leq G$ with the $\core_G(H)$, which is a normal subgroup.  This is especially helpful when dealing with coinduced $G$-sets.  For now, we content ourselves with the following example.

	\begin{example}
		Suppose $G=\Sigma_n$ is the symmetric group on $n$ elements for $n\geq 5$.  Let $H$ be any proper subgroup of $\Sigma_n$, other than the alternating group.  Since $\core_{\Sigma_n}(H)$ is a normal subgroup of $\Sigma_n$ that is contained in $H$ it must be the trivial group. By Proposition \ref{coreLemma}, if $\mathcal{O}$ is an indexing category with $\Sigma_n/H$ admissible then $\Sigma_n/e$ is also admissible.   	
	\end{example}

	The previous example illustrates the surprising ways the core can affect the admissible sets of an indexing category.  In Section \ref{section:transferSystems}, specifically the proof of Proposition \ref{prop:catImpliesSystem}, the interaction between the cores of subgroups and indexing categories plays a central technical role in our characterization of \emph{compatible} indexing categories.  We will define compatibility of indexing categories precisely in Definition \ref{defn:compatibleIndexingCategories} below.  Presently, it is sufficient to think about a pair of indexing categories $\compPair$ as being compatible if $\mathcal{O}_m$ acts on a $\mathcal{O}_a$ in some algebraic sense.  We end this section by collecting a few corollaries illustrating the ways that the core sheds light on the rigid structure of compatible pairs of indexing categories.
	
	\begin{cor}\label{cor:simpleGroups}
		Suppose $H$ is a subgroup of $G$ such that $\core_G(H)=e$ is the trivial group and suppose $\mathcal{O}_m$ is an indexing category with $G/H$ an $\mathcal{O}_m$-admissible $G$-set.  If $\compPair$ is a compatible pair of indexing categories, then $\mathcal{O}_a$ is the complete indexing category.
 	\end{cor}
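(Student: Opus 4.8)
The plan is to leverage the vanishing of $\core_G(H)$ to force a norm all the way down to the trivial subgroup, and then to extract every orbit $G/M$ from a single coinduced $G$-set produced by compatibility.

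First I would record that, since $\core_G(H)=e$ and $G/H$ is $\mathcal{O}_m$-admissible, Proposition \ref{coreLemma} immediately gives that $G/e=G/\core_G(H)$ is $\mathcal{O}_m$-admissible; in other words the norm $N_e^G$ is one of the multiplicative operations indexed by $\mathcal{O}_m$. Next I would feed this into the compatibility of $\compPair$. The point of Definition \ref{defn:compatibleIndexingCategories} is exactly that Tambara reciprocity for an $\mathcal{O}_m$-indexed norm always outputs $\mathcal{O}_a$-indexed transfers; the relevant instance is that whenever $B/A$ is $\mathcal{O}_m$-admissible and $T$ is an $i_A^*\mathcal{O}_a$-admissible $A$-set, the coinduced $B$-set $\map_A(B,T)$ is $i_B^*\mathcal{O}_a$-admissible (this records that the reciprocity formula for $N_A^B$ applied to a sum of transfers only uses transfers already in $\mathcal{O}_a$). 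Taking $A=e$, $B=G$, and $T=\{0,1\}$ --- which is an $i_e^*\mathcal{O}_a$-admissible $e$-set, since $i_e^*\mathcal{O}_a=\Set^e$ --- I conclude that $\map_e(G,\{0,1\})$, i.e. the power set of $G$ with its translation $G$-action, is an $\mathcal{O}_a$-admissible $G$-set.

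Now I would read off the orbits. For each subgroup $M\leq G$, the subset $M\subseteq G$ has stabilizer exactly $M$ under translation, so its $G$-orbit is isomorphic to $G/M$; by Lemma \ref{closedUnderSubobjects} this shows $G/M$ is $\mathcal{O}_a$-admissible for every $M\leq G$. To finish, given any $K\leq H'\leq G$ I would intersect the $\mathcal{O}_a$-admissible set $G/K$ with $H'$ and invoke Lemma \ref{intersectionLemma} to get that $H'/K=(G\cap H')/(K\cap H')$ is $\mathcal{O}_a$-admissible; then Lemma \ref{lem:stabilizerQuotients} shows every morphism of finite $G$-sets lies in $\mathcal{O}_a$, i.e. $\mathcal{O}_a=\Set^G$ is the complete indexing category.

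The hard part will be the orbit analysis of $\map_e(G,\{0,1\})$: this is precisely the sort of coinduction computation the paper warns is delicate, and one must notice that although $\map_e(G,-)$ does not preserve disjoint unions, the indicator function of a subgroup $M$ still cuts out a full orbit $G/M$. A second point that needs care is that knowing every $G/M$ is $\mathcal{O}_a$-admissible is a priori weaker than completeness (it does not by itself bound the slopes $H'/K'$ for general $K'\leq H'$), so the closure Lemmas \ref{intersectionLemma} and \ref{lem:stabilizerQuotients} are genuinely needed to close the argument.
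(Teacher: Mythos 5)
Your proof is correct. The first step coincides exactly with the paper's: Proposition \ref{coreLemma} turns the hypothesis $\core_G(H)=e$ into the statement that $G/e$ is $\mathcal{O}_m$-admissible. After that the paper simply cites Corollary 6.5 of \cite{blumberghillbiincomplete}, which is precisely the assertion that compatibility together with admissibility of $G/e$ forces $\mathcal{O}_a$ to be complete; you instead prove that assertion directly. Your argument for it is sound: the two-point trivial $e$-set is admissible for any indexing category (fold maps are always present), so Theorem \ref{thm:compatibilityCharacterization} makes $\map_e(G,\{0,1\})$, the power set of $G$ under translation, an $\mathcal{O}_a$-admissible $G$-set; the subset $M\subseteq G$ has stabilizer exactly $M$, so Lemma \ref{closedUnderSubobjects} gives admissibility of every $G/M$; and Lemmas \ref{intersectionLemma} and \ref{lem:stabilizerQuotients} correctly upgrade this to admissibility of every $H'/K$ and hence to $\mathcal{O}_a=\Set^G$ --- you are right that this last closing step is needed and not automatic. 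The trade-off is transparency versus length: the paper's proof is a one-line reduction to an external black box, while yours is self-contained modulo Theorem \ref{thm:compatibilityCharacterization} and makes visible the combinatorial mechanism (indicator functions of subgroups cutting out full orbits of the coinduced set) that actually drives the completeness of $\mathcal{O}_a$.
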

 
	\begin{proof}
		By Proposition \ref{coreLemma}, we have that $G/e$ is $\mathcal{O}$-admissible.  The result is now the conclusion of Corollary 6.5 of\cite{blumberghillbiincomplete}.
	\end{proof}
	\begin{cor}
		Suppose $G$ is a simple group, $H\leq G$ a proper subgroup, and $\mathcal{O}_m$ is an indexing category.  If $G/H$ is $\mathcal{O}_m$-admissible then the only indexing category $\mathcal{O}_a$ which is compatible with $\mathcal{O}_m$ is the complete indexing category.
	\end{cor}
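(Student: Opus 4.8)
The plan is to deduce this immediately from Corollary \ref{cor:simpleGroups}. That corollary already delivers the conclusion for any subgroup $H$ satisfying $\core_G(H) = e$, so the only thing to verify is that this hypothesis is automatic once $G$ is simple and $H$ is proper.

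First I would recall that $\core_G(H) = \bigcap_{g \in G} H^g$ is a \emph{normal} subgroup of $G$: conjugation by any element of $G$ merely permutes the collection of conjugates $\{H^g : g \in G\}$, hence fixes their intersection. Next, taking $g = e$ in the intersection shows $\core_G(H) \leq H$, so in particular $\core_G(H) \leq H \subsetneq G$. Since $G$ is simple, its only normal subgroups are $e$ and $G$; as $\core_G(H)$ is a proper normal subgroup, we conclude $\core_G(H) = e$.

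With this in hand, the hypotheses of Corollary \ref{cor:simpleGroups} are satisfied: $\core_G(H) = e$ and $G/H$ is $\mathcal{O}_m$-admissible by assumption. Therefore any indexing category $\mathcal{O}_a$ compatible with $\mathcal{O}_m$ is the complete indexing category, which is exactly the claim. There is no real obstacle here — the content is entirely contained in Corollary \ref{cor:simpleGroups} (and ultimately in Corollary 6.5 of \cite{blumberghillbiincomplete}); this statement is just the clean specialization obtained by observing that proper subgroups of a simple group have trivial core.
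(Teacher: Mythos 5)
Your proposal is correct and matches exactly what the paper intends: the corollary is stated without proof precisely because it is the immediate specialization of Corollary \ref{cor:simpleGroups} once one observes that $\core_G(H)$ is a proper normal subgroup of the simple group $G$ and hence trivial. Your verification of that observation is accurate and complete.
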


	\section{Bi-incomplete Tambara Functors}\label{section:biincomplete}
	
	In equivariant algebra, the role of abelian groups is played by algebraic objects known as Mackey functors. These amount to a collection of abelian groups, indexed on the subgroups of $G$, that are connected by various maps called transfers and restrictions.  The role of commutative rings is played by Tambara functors which, in addition to being Mackey functors, have the structure of a ring for every subgroup as well as multiplicative norm maps.  In certain cases, only some of the possible transfer and norm maps are available, and we call such structures bi-incomplete Tambara functors.  In this section we review the notion of incomplete Mackey functors and bi-incomplete Tambara functors.  Most of the material from this section can be found in \cite{blumberghillbiincomplete}.  Throughout we let $\mathcal{O}$ be an indexing category.
	
	\begin{defn}
		For a finite group $G$, the \emph{$\mathcal{O}$-Burnside Category} $\mathcal{A}^G_{\mathcal{O}}$ is the category whose objects are finite $G$-sets and morphisms $\mathcal{A}^G_{\mathcal{O}}(X,Y)$ are isomorphism classes of spans 
		\[
			[X \xleftarrow{r} A \xrightarrow{t} Y]
		\]  
		where $r,t$ are equivariant maps of $G$-sets with $t\in \mathcal{O}$. Composition is given by pullback, i.e.\ the composition of $[X \xleftarrow{r_1} A \xrightarrow{t_1} Y]$ and $[Y \xleftarrow{r_2} B \xrightarrow{t_2} Z]$ is the class of the span along the top of
		\[
			\begin{tikzcd}
				& & A\times_Y B \ar[bend right, "r_1\circ \pi_1"']{ddll} \ar["\pi_1"]{dl}\ar[bend left,"t_2\circ \pi_2"]{ddrr} \ar["\pi_2"']{dr} & & \\
				& A \ar["t_1"']{dr} \ar["r_1"]{dl} & & B \ar["t_2"']{dr} \ar["r_2"]{dl} & \\
				X & & Y & & Z	
			\end{tikzcd}
		\]
		where the middle square is a pullback.
	\end{defn} 
	
	\begin{remark}
		Composition in the category $\mathcal{A}^G_{\mathcal{O}}$ is well defined because the indexing category $\mathcal{O}$ is pullback stable.  In many ways this is the point of indexing categories, although they were invented in \cite{BlumbergHillIncomplete} to manage incomplete systems of norms, not transfers.
	\end{remark}
	
	\begin{example}
		When $\mathcal{O} = \Set^G$ is the complete indexing category, we refer to $\burnside^G_{\mathcal{O}}$ as simply the Burnside category of $G$ and denote it by $\burnside^G$.
	\end{example}

	Morphisms in the $\mathcal{A}^G_{\mathcal{O}}$ factor as the composition of two nice families of maps called the restrictions and transfers.  For a map $f\colon X\to Y$ in $\Set^G$, we define the restriction of $f$ by 
	\[
		R_f = [Y\xleftarrow{f} X =X].
	\]
	Similarly, if $f\in \mathcal{O}$ we can define the transfer of $f$ by
	\[
		T_f = [X = X \xrightarrow{f} Y].
	\]
	
	Using these, we can decompose an arbitrary span as 
	\begin{equation}\label{spanDecomposition}
		[X \xleftarrow{f} A \xrightarrow{g} Y] = T_g\circ R_f.
	\end{equation}
	
	When $f\colon G/K\to G/H$ for $K\leq H$ is the canonical quotient we denote $R_f$ and $T_f$ by $R^H_K$ and $T^H_K$ respectively.  Finally, if $f\colon G/H\to G/H^g$ is a conjugation isomorphism, we denote $T_f$ by $c_g$, the conjugation by $g$. Choosing $c_{g^{-1}} = R_f$ leads to the same conjugation maps.
	
	The $\mathcal{O}$-Burnside category of a group $G$ is a semi-additive category with finite products given by disjoint union of $G$-sets.  It follows that a product preserving functor $M\colon \Oburnside^G\to \Set$ naturally takes values in commutative monoids.  For any finite $G$-set $X$, the addition on $M(X)$ is given by 
	\[
		M(X)\times M(X)\cong M\left(X\amalg X\right)\xrightarrow{M(T_{\nabla})} M(X)
	\]
	where $\nabla\colon X\amalg X\to X$ is the fold map.  The unit is given by the image of 
	\[
		M(\emptyset)\xrightarrow{M(T_{\emptyset\to X})} M(X)
	\]
	Note $M(\emptyset)$ must be a singleton set as $M$ is product preserving.
	
	\begin{defn}
		A \emph{semi $\mathcal{O}$-Mackey functor} is a product preserving functor $M\colon \Oburnside^G\to \Set$.  An \emph{$\mathcal{O}$-Mackey functor} is a semi $\mathcal{O}$-Mackey functor that is group complete, in the sense that for all $X$ the commutative monoid $M(X)$ is actually an abelian group. Any semi $\mathcal{O}$-Mackey functor $M$ determines a $\mathcal{O}$-Mackey functor we denote $M^+$ by group completing $M(X)$ at every $X$.  
		
		A morphism of (semi) $\mathcal{O}$-Mackey functors is a natural transformation of product preserving functors.  We denote the category of $\mathcal{O}$-Mackey functors for a group $G$ by $\Mack_{\mathcal{O}}^G$.  	When $\mathcal{O} = \Set^G$ is the complete indexing category we recover the usual category of Mackey functors and denote this category by $\Mack^G$.
	\end{defn}
	
	\begin{remark}
		It is common to define Mackey functors to be product preserving functors from the Burnside category into the category of abelian groups.  While this definition is equivalent, we have elected to value our Mackey functors in the category of sets for two reasons.  First, this definition lines up more readily with the view that the Burnside category is the multisorted Lawvere theory whose models are Mackey functors. Second, and more importantly, we wish to have a better analogy with Tambara functors which are also given as product preserving functors from a specified category $\mathcal{P}^G$ into sets.  Unlike Mackey functors, however, Tambara functors cannot be defined as functors that land in abelian groups.  The essential difference is that Tambara functors are a model for equivariant ring structures and the bilinear multiplication map $R\times R\to R$ of a ring $R$ is not a map of abelian groups. 
	\end{remark}
	
	While there are many objects in $\burnside^G_{\mathcal{O}}$, it suffices to specify the value of a Mackey functor $M$ on only finitely many objects.  Since every finite $G$-set is the union of its orbits, and all orbits are isomorphic to $G/H$ for some subgroup $H$, an $\mathcal{O}$-Mackey functor $M$ is determined by its value on the objects $G/H$ as well as on spans $[G/H\xleftarrow{r} G/K \xrightarrow{t} G/L] = T_K^LR^H_K$.  Abusing notation, we use $R^H_K$ (resp.\ $T^H_K$) to denote both the restriction (resp.\ transfer) in $\burnside^G_{\mathcal{O}}$ as well as the map induced by applying $M$.  
	
	The following lemma gathers together several standard and useful facts about $\mathcal{O}$-Mackey functors.  The proofs are all standard and, with the exception of the double coset formula, straightforward.  The double coset formula follows from a computation of the orbit decomposition for the product of transitive $H$-sets: $H/L\times H/K$.
	
	\begin{lem}\label{lem:propertiesOfMackeyFunctors}
		Let $H$ be a subgroup of $G$. The restriction and transfer maps of a semi-$\mathcal{O}$-Mackey functor $M$ satisfy the following properties, assuming the transfers exists.
		\begin{enumerate}
			\item (Additivity) All transfers and restrictions are additive maps of commutative monoids.
			\item (Composition) For $K\leq L\leq H$ we have $R^L_KR^H_L = R^H_K$  and  $T^H_LT^L_K = T^H_K$.
			\item (Double Coset Formula) Let $K$ and $L$ be subgroups of $H$ and $\gamma_1,\dots, \gamma_n$ be a choice of representatives for the double cosets $K\backslash H/L$.  If $H/L$ is $\mathcal{O}$-admissible, we have the following equality of maps:
			\[
				R^H_KT^H_L = \sum\limits_{\gamma_i} T_{K\cap L^{\gamma_i}}^K R^{L^{\gamma_i}}_{K\cap L^{\gamma_i}} c_{\gamma_i}
			\]
			\item (Weyl Group Action) The Weyl group $W_G(H) = N_G(H)/H$ acts naturally on the $G$-set $G/H$.  Applying $M$ to this action gives $M(G/H)$ a canonical $W_G(H)$ action.  
		\end{enumerate}
	\end{lem}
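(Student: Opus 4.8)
The plan is to deduce all four statements from functoriality of $M$ together with the semi-additive structure of $\mathcal{A}^G_{\mathcal{O}}$, so that the content reduces to identities among spans of finite $G$-sets. For (1), I would use the decomposition $[X\xleftarrow{f}A\xrightarrow{g}Y] = T_g\circ R_f$ of \eqref{spanDecomposition} to reduce to showing that restrictions $R_f$ and transfers $T_f$ are additive. Since $M$ preserves finite products and $\mathcal{A}^G_{\mathcal{O}}$ is semi-additive, each $X\amalg X$ is a biproduct and the addition on the monoid $M(X)$ is $M(T_{\nabla})$ for the fold map $\nabla\colon X\amalg X\to X$; additivity of $M(R_f)$ and $M(T_f)$ — including preservation of the zero element, via the analogous compatibility with the empty $G$-set — then follows from the compatibility of the fold maps with $R_f$ and $T_f$ in $\mathcal{A}^G_{\mathcal{O}}$, a direct pullback check amounting to bilinearity of composition in a semi-additive category transported through $M$.

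For (2), the identities already hold in $\mathcal{A}^G_{\mathcal{O}}$, so one applies $M$. Composing $R^H_L$ with $R^L_K$ amounts to pulling back the identity of $G/L$ against the quotient $G/K\to G/L$, which returns $G/K$ together with the span $[G/H\leftarrow G/K = G/K] = R^H_K$; the transfer case is dual, and $T^H_K$ is defined whenever $T^H_L$ and $T^L_K$ are, because $\mathcal{O}$, as a subcategory of $\Set^G$, is closed under composition.

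The double coset formula (3) is the one step requiring real work, and I expect it to be the main obstacle. I would compute $R^H_K\circ T^H_L$ directly. Writing $T^H_L = [G/L = G/L\to G/H]$ and $R^H_K = [G/H\leftarrow G/K = G/K]$, this composite is controlled by the pullback $P = G/L\times_{G/H}G/K$ in $\Set^G$; equivalently by the orbit decomposition of the $H$-set $H/L\times H/K$, since $P\cong G\times_H(H/L\times H/K)$. Choosing double coset representatives $\gamma_1,\dots,\gamma_n$ for $K\backslash H/L$, the point $(\gamma_i L, eK)\in P$ has stabilizer $K\cap L^{\gamma_i}$, and these exhaust the orbits, so $P\cong\coprod_i G/(K\cap L^{\gamma_i})$. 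Because $T^H_L$ exists, $H/L$ is $\mathcal{O}$-admissible by Lemma \ref{lem:stabilizerQuotients}, so Lemma \ref{closureUnderConjugation} and Lemma \ref{intersectionLemma} show each $K/(K\cap L^{\gamma_i})$ is $\mathcal{O}$-admissible and the transfers on the right-hand side are defined. Now $M$ carries $P\cong\coprod_i P_i$ to a product and carries the transfer along the fold map to the monoid sum, so the span through $P$ splits as $\sum_i$ of the spans through the $P_i$; identifying the two legs of $P_i$ — the left leg being the quotient $G/(K\cap L^{\gamma_i})\to G/L^{\gamma_i}$ followed by the conjugation isomorphism $G/L^{\gamma_i}\cong G/L$, the right leg being the quotient $G/(K\cap L^{\gamma_i})\to G/K$ — shows the $i$-th summand is $T^K_{K\cap L^{\gamma_i}}R^{L^{\gamma_i}}_{K\cap L^{\gamma_i}}c_{\gamma_i}$, and (after checking independence of the choice of representatives) this gives the formula. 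The care is entirely in the bookkeeping of conjugations and coset representatives; it is the classical double coset computation, carried out within $\mathcal{A}^G_{\mathcal{O}}$.

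For (4), the Weyl group $W_G(H) = N_G(H)/H$ acts on $G/H$ on the right by $gH\mapsto gnH$, which exhibits $W_G(H)$ as the group of $G$-set automorphisms of $G/H$. Each such automorphism is an isomorphism of $G$-sets, hence lies in $\mathcal{O}$ and defines an invertible morphism of $\mathcal{A}^G_{\mathcal{O}}$; applying $M$ yields a homomorphism from $W_G(H)$ to the automorphism group of $M(G/H)$, that is, a $W_G(H)$-action. Naturality in $M$ is immediate, since a morphism of $\mathcal{O}$-Mackey functors is a natural transformation and hence commutes with the morphisms of $\mathcal{A}^G_{\mathcal{O}}$ induced by these automorphisms.
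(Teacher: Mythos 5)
Your proposal is correct and follows exactly the route the paper indicates: the paper only remarks that the proofs are standard and that the double coset formula "follows from a computation of the orbit decomposition for the product of transitive $H$-sets $H/L\times H/K$," which is precisely your identification of the pullback $G/L\times_{G/H}G/K\cong G\times_H(H/L\times H/K)$ with orbits indexed by $K\backslash H/L$ and stabilizers $K\cap L^{\gamma_i}$. Your admissibility check for the right-hand side also matches Remark \ref{rem:mackeyDoubleCoset}, so nothing further is needed.
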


	\begin{remark}\label{rem:mackeyDoubleCoset}
		In the statement of the double coset formula we note that $\mathcal{O}$-admissibility of $H/L$ is sufficient to imply both sides of the equation are well-defined.  The left hand side is clear. To see the right hand side is well defined, note that $K/(K\cap L^{\gamma_i})$ is admissible for all $i$ by Lemma \ref{intersectionLemma}.  As a warning, it appears to be possible for all the transfer on the right hand side to be well defined but for the transfer on the left to still be undefined.  
	\end{remark}

	Examples of Mackey functors include the Burnside Mackey functor, representation rings, and Galois field extensions discussed below.  For simplicity, we write the examples as complete Mackey functors, i.e. with $\mathcal{O} = \Set^G$.  Incomplete versions can be obtained for any $\mathcal{O}$ by restricting our functors along the inclusion $\Oburnside^G\to \burnside^G$.
	\begin{example}\label{ex:represented}
		For any object $X\in \burnside^G$ there is a represented semi-Mackey functor $A_X = \burnside^G(X,-)$.   The group completion $A^+_{G/G}$ is called the Burnside Mackey functor.  One can check that $A_{G/G}(G/H)$ is isomorphic to the set of isomorphism classes of finite $H$-sets with addition given by disjoint union.  It follows that $A^+_{G/G}(G/H)$ is the free abelian group on the set of transitive $H$-sets.  The restriction $R^H_K$ is given by the restriction of $H$-sets to $K$-sets while the transfer $T^H_K$ is given by induction.
	\end{example}

	\begin{example}\label{ex:repRing}
		For every $H\leq G$, we can form the abelian group of virtual complex $H$-representations $\textrm{Rep}(H)$.  For subgroups $K\leq H$, these groups are related via the classical induction and restriction homomorphisms $\textrm{Ind}_K^H\colon \textrm{Rep}(K)\to \textrm{Rep}(H)$ and $\textrm{Res}^H_K\colon \textrm{Rep}(H)\to \textrm{Rep}(K)$.  This produces the complex representation ring Mackey functor $R_G$, which is given by $R_G(G/H) = \textrm{Rep}(H)$ with transfers and restrictions are given by induction and restriction homomorphisms. Varying the kinds of representations considered leads to other examples, such as the real orthogonal representation ring $RO_G$, the unitary representation ring $RU_G$, etc.
	\end{example}
	\begin{example}\label{galoisExample}
		Let $F\subset L$ be a a finite Galois field extension with Galois group $G$.  We obtain a Mackey functor $\mathbb{L}$ given by $\mathbb{L}(G/H) = L^H$.  The restriction $R^H_K$ associated to $K\leq H$ is given by inclusion of fixed points.  The transfers $T^H_K$ are given by the classical field trace:  picking representatives $h_1K,\dots, h_nK$ for the left cosets $H/K$ we define $\textrm{tr}\colon L^K\to L^H$ by 
		\[
			\textrm{tr}(x) = \sum\limits_{i=1}^n  h_ix
		\]
		which does not depend on the choice of representatives as $x\in L^K$.  In fact, the same transfers and restrictions give a Mackey functor for any ring $R$ with $G$-action.
	\end{example}

	Examples of incomplete Mackey functors come to us from unstable equivariant homotopy theory.
	\begin{example}
		Recall the trivial indexing category $\mathcal{O}^{tr}$ of Example \ref{ex:trivialIndexing}.  An $\mathcal{O}^{tr}$-Mackey functor is more commonly known as a coefficient system.  Such an object consists of a system of abelian groups, indexed by the subgroups of $G$, and connected by only the restriction maps.  Given a based $G$-space $X$, we have the $n$-th homotopy coefficient system of $X$ given by $G/H\mapsto \pi_n(X^H)$ where $X^H$ denotes the $H$-fixed points of $X$.  For subgroups $K\leq H$, the restriction map $R^H_K\colon \pi_n(X^H)\to \pi_n(X^K)$ is induced by the inclusion of fixed points $X^H\to X^K$.
	\end{example}
		
	\begin{example}
		Suppose $X$ is a (based) $G$-space which is also an infinite loop space.  In the non-equivariant setting, such a space is equivalent to a connective spectrum; the failure of $X$ to be a genuine $G$-spectrum is measured by the non-existence of transfer maps in the homotopy coefficient systems.  More precisely, for a genuine $\Omega$-$G$-spectrum $Y$, the homotopy groups $\pi_n(Y)$ fit into a Mackey functor with the transfers determined by the delooping isomorphisms $\Omega^V(Y_{n+V})\cong Y_n$.  To build in transfer maps to the homotopy coefficient system of $X$ is equivalent to making choices for equivariant deloopings $X\cong \Omega^V(X_V)$ for various non-trivial $G$-representations $V$.  Such deloopings can be difficult to construct, although models in the case $G=C_2$ have been constructed by Liu, and used in computations of equivariant cohomology by Petersen \cite{Liu_twistedBar,Petersen}.  For more on the connections between incomplete Mackey functors and stability in topology we refer the reader to the work of Lewis and Blumberg--Hill \cite{Lewis:Hurewicz,BHIncompleteStable}.
	\end{example}

	Examples \ref{ex:represented}, \ref{ex:repRing}, and \ref{galoisExample} above are all classical examples of Mackey functors.  Notice, though, that all three examples actually have more structure than simply an abelian group for all subgroups $H\leq G$.  If $M$ is any Mackey functor from the examples,  $M(G/H)$ is actually a ring for all $H$.  This ring structure, and the norm maps to be described, leads to the notion of Tambara functors, originally defined in \cite{Tambara}.  Tambara functors, similar to Mackey functors, are defined by first giving an additive category $\mathcal{P}^G$ and letting  (semi) Tambara functors be the product preserving functors from this category into sets.  
	
	Eventually we need to define bi-incomplete Tambara functors, which depend on a pair of indexing categories $\compPair$, one for the multiplicative norms and one for the additive transfers.  Since the composition laws in the category $\mathcal{P}^G$ are a bit elaborate, we first give the definition in the case of complete indexing categories. We then proceed to explain the necessary modifications to create a category we denote $\bitburnside$ which is the domain of a bi-incomplete Tambara functor.
	
	For a group $G$, the \emph{polynomial category} $\Tburnside^G$ is the category whose objects are finite $G$-sets and with morphism sets $\Tburnside^G(X,Y)$ consisting of isomorphism classes of \emph{bispans} 
	\[
		[X \xleftarrow{r} A \xrightarrow{n} B \xrightarrow{t} Y]
	\]  
	where $r$, $n$, and $t$ are equivariant maps of $G$-sets.  The composition of morphisms is described below. This category, like the Burnside category, is semi-additive and thus for any product preserving functor $S\colon \Tburnside^G\to \Set$ and any $X\in \Tburnside^G$, the set $S(X)$ is naturally a commutative monoid with addition given by
	\[
		S(X)\times S(X)\cong S(X\amalg X) \xrightarrow{S([X\amalg X\xleftarrow{=}X\amalg X\xrightarrow{=}X\amalg X\xrightarrow{\nabla}X])} S(S).
	\]
	
	Putting the fold map $\nabla$ in the middle of the bispan, instead of on the right, leads to a second map $S(X)\times S(X)\to S(X)$ which we call the multiplication.  The composition laws of $\mathcal{P}^G$ are such that the addition and multiplication make $S(X)$ into a commutative semiring (ring possibly without additive inverses).
	
	\begin{defn}
		A \emph{semi-Tambara functor} is a a product preserving functor $\Tburnside^G\to \Set$.  A \emph{Tambara functor} is a semi-Tambara functor $S$ such that $S(X)$ is a ring for all $X$. A morphism of Tambara functors is a natural transformation of product preserving functors and the category of Tambara functors for a group $G$ is denoted by $\Tamb^G$.
	\end{defn} 
	
	Before describing the composition laws for $\Tburnside^G$, we highlight the key difference between the categories $\burnside^G$ and $\Tburnside^G$, namely the extra map $A\xrightarrow{n}B$ in the bispan.  The map $n$ in this category exists to parameterize the norm maps $N^H_K\colon S(G/K)\to S(G/H)$ for Tambara functors $S$.  These norm maps are multiplicative maps between the various rings comprising a Tambara functor.  Between the transfers, norms, and restrictions the category $\Tburnside^G$ has three distinguished kinds of morphisms associated to any map $f\colon X\to Y$ in $\Set^G$.  We denote these by
	\begin{align*}
		R_f  & = [Y\xleftarrow{f} X = X =X], \\
		N_f  & = [X = X \xrightarrow{f} Y = Y], \\
		T_f  & = [X =X = Y\xrightarrow{f} Y],
	\end{align*}
	and refer to these maps as the restriction, norm, and transfer associated to the map $f$.  Just as we did for Mackey functors, we abuse notation and use $R_f, N_f,$ and $T_f$ for both the maps in $\Tburnside^G$ as well as for the maps induced by a Tambara functor $S$.  Further, when $f\colon G/K\to G/H$ is a canonical quotient for $K\leq H$, we write $R^H_K$, $T^H_K$ and $N^H_K$.
	
	To describe the composition laws in $\Tburnside^G$, we first note than an arbitrary bispan 
	\[
		\omega = [X \xleftarrow{r} A \xrightarrow{n} B \xrightarrow{t} Y]
	\]
	can be written as the composition $T_t\circ N_n \circ R_r$.  Given another bispan 
	\[
		\omega' = [Y \xleftarrow{r'} C \xrightarrow{n'} D \xrightarrow{t'} Z],
	\]
	we need to know how to write $\omega'\circ \omega = T_{t'}N_{n'}R_{r'}T_tN_nR_r$ as a composite of the form $T_{t''}N_{n''}R_{r''}$.  To do this, we need to describe the interchange rules relating the various $T$'s, $N$'s and $R$'s; these are summarized over the course of a few lemmas.
	
	\begin{lem}
		For $G$-equivariant maps $f\colon X\to Y$ and $g\colon Y\to Z$ we have
		\begin{enumerate}
			\item $T_{g\circ f} = T_{g}\circ T_f$, 
			\item $N_{g\circ f} = N_g\circ N_f$,
			\item $R_{g\circ f} = R_{f}\circ R_{g}$.
			
		\end{enumerate}
	\end{lem}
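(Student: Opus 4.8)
The plan is to verify each of (1)--(3) by a direct computation with the composition law of $\Tburnside^G$, taking advantage of the fact that in each case two of the three legs of every bispan in sight are identities. Recall that $T_f$ has the form $[X = X = X \xrightarrow{f} Y]$ and $R_f$ the form $[Y \xleftarrow{f} X = X = X]$ --- so their norm legs (and, for $T_f$, the restriction leg; for $R_f$, the transfer leg) are identities --- while $N_f = [X = X \xrightarrow{f} Y = Y]$ has identity restriction and transfer legs but a genuinely nontrivial norm leg. Because of this degeneracy, the exponential (distributivity) diagrams that occur in the general composition recipe collapse, and each identity reduces to two elementary observations: a pullback along an identity is again (up to canonical isomorphism) an identity, and the exponential diagram attached to a map $h$ and an identity is trivial, because the relevant pushforward $\Pi_h$ carries terminal objects to terminal objects.

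For (1) and (3) the norm leg of both bispans is an identity, so no exponential diagram contributes and the composite is computed entirely by the Mackey-type (pullback) part of the recipe, exactly as in the Burnside category $\burnside^G$. Composing $T_g$ after $T_f$ requires the pullback of the transfer leg $f\colon X\to Y$ of $T_f$ against the identity restriction leg of $T_g$; this pullback is canonically $X$, the remaining legs stack up to the identity and to $g\circ f$, and one reads off $[X = X = X \xrightarrow{g\circ f} Z] = T_{g\circ f}$. Dually, composing $R_g$ and then $R_f$ requires the pullback of the identity transfer leg of $R_g$ against the restriction leg $f\colon X\to Y$ of $R_f$, again canonically $X$, yielding $[Z \xleftarrow{g\circ f} X = X = X] = R_{g\circ f}$; the contravariant order $R_f\circ R_g$ in the statement is precisely the order in which these two bispans compose, since $R_h$ is a morphism into the source of $h$.

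Claim (2) is the only one with any real content, since it is the only place where the norm leg is nontrivial and the exponential diagram must actually be unwound. Composing $N_g$ after $N_f$, the first pullback (the identity transfer leg of $N_f$ against the identity restriction leg of $N_g$) returns $Y$; pulling this back along the norm leg $f\colon X\to Y$ of $N_f$ returns $X$, whose structure map to the middle object of $N_g$ is $f$; and the pushforward $\Pi_f(X \xrightarrow{\id} X)$ over $Y$ is $Y$ itself (a fiberwise product of singletons), with evaluation map the identity of $X$ and structure map $f$. Feeding this back through the recipe, the restriction and transfer legs of the composite both come out as identities while the two norm legs $f$ and $g$ stack up to $g\circ f$, so the composite is $[X = X \xrightarrow{g\circ f} Z = Z] = N_{g\circ f}$.

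The main obstacle is purely bookkeeping: one must have the general composition recipe of $\Tburnside^G$ --- in particular the precise shape of the exponential diagram and the order in which the pullbacks are taken --- written out with named objects, and then carefully track which objects and maps have collapsed to identities, since the indexing is easy to turn around. There is no conceptual difficulty, as every collapse is forced by the two elementary facts above. As an alternative and essentially equivalent route, one can identify morphisms of $\Tburnside^G$ with isomorphism classes of polynomial diagrams, under which $R_f$, $N_f$, and $T_f$ correspond to base change $f^{*}$, the dependent product $\Pi_f$, and the dependent sum $\Sigma_f$; claims (1)--(3) then become the standard identities $\Sigma_g\Sigma_f \cong \Sigma_{g\circ f}$, $\Pi_g\Pi_f \cong \Pi_{g\circ f}$, and $f^{*}g^{*} \cong (g\circ f)^{*}$.
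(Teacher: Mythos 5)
The paper states this lemma without proof, treating it as part of the standard description of composition in $\Tburnside^G$ going back to Tambara and Strickland, so there is no in-paper argument to compare against; your verification is correct and is essentially the standard one. For (1) and (3) the norm legs are identities, so the composite is computed purely by pullbacks exactly as in $\burnside^G$, and for (2) the only load-bearing input is that the dependent product appearing in the exponential diagram carries terminal objects to terminal objects (i.e.\ $n_*$ of an identity is an identity), which collapses the distributivity step and lets the two norm legs compose. Your closing observation --- that under the identification of a bispan $[X\xleftarrow{r}A\xrightarrow{n}B\xrightarrow{t}Y]$ with the polynomial functor $\Sigma_t\Pi_n r^*$ the three claims become $\Sigma_g\Sigma_f\cong\Sigma_{g\circ f}$, $\Pi_g\Pi_f\cong\Pi_{g\circ f}$, and $f^*g^*\cong(g\circ f)^*$ --- is the cleanest way to package the bookkeeping. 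One small indexing caution on (2): in the usual composition recipe the exponential diagram is taken along the norm leg of the \emph{second} bispan, so the relevant pushforward is $\Pi_g$ applied to the (pulled-back) identity transfer leg of $N_f$, yielding $\id_Z$, rather than $\Pi_f(\id_X)$; as you yourself anticipate, this changes nothing, since either dependent product of an identity is an identity, but it is worth getting the orientation right if the computation is written out in full.
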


	\begin{lem} \label{tambaraDoubleCosetFormula}
		For any pullback square in $\Set^G$
		\[
			\begin{tikzcd}
				A \ar["h"]{d} \ar["k"]{r} & B \ar["f"]{d} \\
				C \ar["g"]{r} & D
			\end{tikzcd}
		\]
		we have both $R_f\circ N_g = N_{k}\circ R_{h}$ and $R_f\circ T_g = T_{k}\circ R_{h}$.
	\end{lem}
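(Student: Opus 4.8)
The two identities are the ``base change'' laws in $\mathcal{P}^G$ attached to a pullback square: the second, $R_f\circ T_g = T_k\circ R_h$, is the familiar statement (already true in $\burnside^G$) that restricting a transfer along a map equals the transfer, along the pulled-back map, of the restriction; the first, $R_f\circ N_g = N_k\circ R_h$, is its genuinely multiplicative analogue for norms. My plan is to verify both directly from the definition of composition of bispans in $\mathcal{P}^G$, exploiting the fact that, although that definition is elaborate in general, it degenerates almost completely on the bispans at hand.

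I would treat the transfer identity first. Write $T_g = [C = C = C\xrightarrow{g} D]$ and $R_f = [D\xleftarrow{f} B = B = B]$; both have identity middle (norm) leg. Inspecting the composition recipe, the composite of two bispans whose norm legs are identities again has identity norm leg and is computed by the pullback composition of the underlying spans — the exponential/distributor step being trivial precisely because the legs it operates on are identities. Hence $R_f\circ T_g$ is the span obtained by pulling back $g$ against $f$, which is exactly the square of the statement, namely $[C\xleftarrow{h} A\xrightarrow{k} B]$, i.e.\ the bispan $[C\xleftarrow{h} A = A\xrightarrow{k} B] = T_k\circ R_h$.

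For the norm identity I would unwind the full composition recipe for $R_f\circ N_g$, with $N_g = [C = C\xrightarrow{g} D = D]$ and $R_f = [D\xleftarrow{f} B = B = B]$. The recipe first forms the pullback of the right leg of $N_g$ (here $\id_D$) against the left leg of $R_f$ (here $f$), which collapses to a copy of $B$; it then forms the pullback of the norm leg $g$ against (that copy of) $f$, which is precisely the square in the statement and so produces $A$ with projections $h$ and $k$; finally it runs the exponential/distributor diagram, which here is built from the dependent product along $n' = \id_B$ and is therefore trivial. Assembling the output bispan yields $[C\xleftarrow{h} A\xrightarrow{k} B = B]$, which is $N_k\circ R_h$; a parallel (even shorter) unwinding of $N_k\circ R_h$, or a direct comparison of normal forms, confirms the equality.

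The only real content is bookkeeping, and the step I would watch most carefully is the norm case: identifying the intermediate pullback that the composition recipe manufactures with the given pullback square (legitimate because pullbacks are unique up to canonical isomorphism and morphisms of $\mathcal{P}^G$ are isomorphism classes of bispans, so that isomorphism is absorbed), and confirming that the distributor step genuinely degenerates when one of its legs is an identity. Both are routine, but a careless treatment of the distributor is the one place the argument could slip. If one wished to avoid the explicit recipe altogether, an alternative is to establish the two Beck--Chevalley isomorphisms $f^{*}\Sigma_{g}\cong\Sigma_{k}h^{*}$ and $f^{*}\Pi_{g}\cong\Pi_{k}h^{*}$ for the slice categories $\Set^{G}/(-)$ and transport them through the polynomial-functor description of $\mathcal{P}^G$; this is conceptually cleaner but presupposes that dictionary.
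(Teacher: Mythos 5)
Your argument is correct. Note that the paper does not actually prove this lemma: it is stated as part of the recalled description of composition in $\mathcal{P}^G$, imported from Tambara's construction of the polynomial category (cf.\ the reference to Section 7 of Strickland's notes). Your direct unwinding of the composition recipe is exactly the standard verification one finds there — ordinary span composition for $R_f\circ T_g$ (since both bispans have identity norm legs, this is just the computation in $\burnside^G$ transported along $i_{add}$), and for $R_f\circ N_g$ the observation that the dependent-product/distributor step is taken along the identity norm leg of $R_f$ and hence degenerates, leaving only the pullback of $g$ against $f$. The two points you flag — that the manufactured pullback is identified with the given one only up to the canonical isomorphism absorbed by passing to isomorphism classes of bispans, and that the distributor genuinely trivializes on an identity leg — are precisely the places where care is needed, and you handle both correctly.
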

	\begin{remark} \label{rem:underlyingMackey}
		The composition law $R_f\circ T_g = T_{k}\circ R_{h}$ in Lemma \ref{tambaraDoubleCosetFormula} is exactly the same composition law satisfied by restrictions and transfers in the Burnside category.  Indeed, there is a functor $i_{add}\colon \burnside^G\to \Tburnside^G$ that is the identity on objects and acts on morphisms by
		\[
			[X\xleftarrow{r} A\xrightarrow{t} Y] \mapsto  [X\xleftarrow{r} A = A\xrightarrow{t} Y].
		\]
		Restricting along $i_{add}$ gives a forgetful functor $i_{add}^*\colon \Tamb^G\to \Mack ^G$ analogous to sending a commutative ring to its underlying abelian group.
		
		Since the norms and restrictions satisfy the same interchange law as transfers and restrictions there is a second functor $i_{mult}\colon \burnside^G\to \Tburnside^G$ which is the identity on objects and acts on morphisms by 
		\[
				[X\xleftarrow{r} A\xrightarrow{t} Y] \mapsto  [X\xleftarrow{r} A \xrightarrow{t} Y= Y].
		\]
		Restricting along this functor gives a functor from the category of Tambara functors to the category of semi-Mackey functors analogous to sending a commutative ring to its underlying multiplicative monoid.
	\end{remark}

	It remains to explain how the transfers and norm maps interchange.  To do so, we first need to describe the \emph{exponential diagrams}.  For $X\in \Set^G$ we denote the category of $G$-sets over $X$ by $\Set^G_{/X}$. Associated to an equivariant map $f\colon X\to Y$, we have a functor $f^*\colon\Set^G_{/Y}\to \Set^G_{/X}$ given by pullback.  By the adjoint functor theorem, $f^*$ has a right adjoint, called the dependent product, which we denote by $f_{*}\colon \Set^G_{/X} \to \Set^G_{/Y}$.  
	
	\begin{example}\label{ex:coinductionAsDependentProduct}
		Let $n\colon G/H\to G/G$ be the collapse map.  There are equivalences of categories $\Set^G_{/(G/H)}\cong \Set^H$ and $\Set^G_{/(G/G)}\cong \Set^G$ and, passing through these equivalences, one can show that $n^*\colon \Set^G\to \Set^H$ is the usual restriction functor.  By the uniqueness of adjoints, it follows that $n_*\colon \Set^H\to \Set^G$ is the coinduction functor. 
	\end{example}
	
	Given two composable maps $A\xrightarrow{t}B\xrightarrow{n}C$ in $\Set^G$, we can form the associated \emph{exponential diagram} 
	\[
		\begin{tikzcd}
			B \ar["n"]{d}&  A  \ar["t"]{l}&  B\times_Cn_*(A) \ar["r'"]{l} \ar["n'"]{d}\\
			C & & n_*(A) \ar["t'"]{ll}
		\end{tikzcd}
	\]
	in which $n'$ is the projection and $t'$ is the map realizing $n_*(A)$ as an object in $\Set^G_{/C}$.  Noting that $B\times_{C}n_*(A) = n^{*}n_{*}(A)$, the map $r'\colon n^{*}n_*(A)\to A$ is the counit of the adjunction.  
	\begin{proposition}
		Given composable maps $A\xrightarrow{t}B\xrightarrow{n}C$ in $\Set^G$, the norm $N_n$ and transfer $T_t$ in $\mathcal{P}^G$ interchange as $N_{n}T_t = T_{t'}N_{n'}R_{r'}$ where $t'$, $n'$, and $r'$ are the maps in the exponential diagram. 
	\end{proposition}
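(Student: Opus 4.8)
The plan is to establish the identity by computing the composite of the two bispans $T_t$ and $N_n$ directly from the composition law of $\Tburnside^G$, and then recognizing the resulting bispan as the one read off from the exponential diagram. In canonical factored form we have $T_t = [A \xleftarrow{=} A \xrightarrow{=} A \xrightarrow{t} B]$ and $N_n = [B \xleftarrow{=} B \xrightarrow{n} C \xrightarrow{=} C]$, while the right-hand side $T_{t'} \circ N_{n'} \circ R_{r'}$ is, by the factorization of an arbitrary bispan recorded above, exactly the single bispan $[A \xleftarrow{r'} n^{*}n_{*}(A) \xrightarrow{n'} n_{*}(A) \xrightarrow{t'} C]$. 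Thus everything reduces to showing that composing $N_n$ after $T_t$ in $\Tburnside^G$ produces this particular bispan.

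First, I would specialize the general bispan composition recipe to the situation at hand. That recipe builds a composite $\omega' \circ \omega$ out of a few iterated pullbacks together with a single dependent-product (exponential) construction; when $\omega = T_t$ and $\omega' = N_n$ the restriction and transfer legs on both sides are identities, so all of the auxiliary pullbacks degenerate and the only non-trivial data that survives is precisely the exponential diagram of the composable pair $A \xrightarrow{t} B \xrightarrow{n} C$. Second, I would match the four $G$-sets and four maps of the output bispan with the maps of that exponential diagram: the restriction leg becomes the counit $r' \colon n^{*}n_{*}(A) \to A$, the norm leg becomes the projection $n' \colon n^{*}n_{*}(A) \to n_{*}(A)$, and the transfer leg becomes the structure map $t' \colon n_{*}(A) \to C$. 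Third, I would check that the canonical isomorphism $B \times_{C} n_{*}(A) \cong n^{*}n_{*}(A)$ and the identification of the relevant counit with $r'$ are the evident ones, so that the computed bispan and $[A \xleftarrow{r'} n^{*}n_{*}(A) \xrightarrow{n'} n_{*}(A) \xrightarrow{t'} C]$ represent the same isomorphism class of bispans.

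Conceptually this is just the statement that norms distribute over transfers: unwinding the effect of each side on a Tambara functor $S$ and tracking components over the orbits of the $G$-sets involved, the left-hand side sends $s$ to the assignment $c \mapsto \prod_{b \in n^{-1}(c)} \bigl( \sum_{a \in t^{-1}(b)} s_{a} \bigr)$, and multiplying out turns this into a sum indexed by the sections of $t$ over the fibre $n^{-1}(c)$ — that is, over the fibre of $n_{*}(A) \to C$ — of products pulled back along the counit, which is exactly $T_{t'} N_{n'} R_{r'}$. I expect the only real obstacle to be organizational: keeping track of the several pullbacks and the dependent product in the general composition formula and confirming that no unexpected twist is introduced when they collapse, so that the output matches the exponential diagram on the nose rather than merely up to some non-canonical isomorphism. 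Alternatively, one may simply invoke that this interchange law is part of Tambara's original construction of $\Tburnside^G$ in \cite{Tambara}; the verification above is included for the reader's convenience.
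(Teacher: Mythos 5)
Your proposal is correct, and it is worth noting that the paper itself offers no proof of this proposition: it is stated as part of the description of composition in $\mathcal{P}^G$ and implicitly deferred to Tambara's original construction and to \cite{strickland2012tambara}. Your outline is the standard verification: one takes Tambara's single-formula definition of bispan composition (iterated pullbacks plus one dependent product), observes that for $N_n\circ T_t$ the outer restriction and transfer legs are identities so the auxiliary pullbacks collapse, and identifies the surviving data with the exponential diagram of $A\xrightarrow{t}B\xrightarrow{n}C$; your closing remark that one may instead simply cite Tambara is exactly what the paper does. The one point to keep straight is a potential circularity: in the paper's presentation composition in $\mathcal{P}^G$ is \emph{described} via these interchange rules, so the content of a genuine proof lies in checking the rules against Tambara's independent definition of composition (or in checking that the rules consistently determine a category), which is the route your first two steps take. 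Your heuristic fiberwise computation $c\mapsto \prod_{b\in n^{-1}(c)}\sum_{a\in t^{-1}(b)}s_a$ expanding into a sum over sections of $t$ on $n^{-1}(c)$ is the right sanity check, since the fiber of $n_*(A)\to C$ over $c$ is precisely that set of sections.
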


	\begin{example} \label{c2Example}
		Let $G=C_2= \{e,\sigma\}$ be the cyclic group of order $2$, let $S\in \Tamb^{C_2}$ and let $a,b\in S(C_2/e)$.  Since addition is the transfer associated to the fold maps, the exponential diagram can be used to compute 
		\[
			N_{e}^{C_2}(a+b) = N_e^{C_2}(a)+N_e^{C_2}(b) + T_e^{C_2}(a\cdot c_{\sigma}(b))
		\]     
		For a reference on how to actually compute exponential diagrams, see Section 7 of \cite{strickland2012tambara}.
	\end{example}

	We now turn to the problem of defining bi-incomplete Tambara functors.  Suppose we are given a pair of indexing categories $\compPair$.  The idea is to take a wide subcategory $\Tburnside^G_{\mathcal{O}_m,\mathcal{O}_a}\subset \Tburnside^G$ with the transfers restricted to maps in $\mathcal{O}_a$ and the norms restricted to $\mathcal{O}_m$.  Due to the complicated composition laws of $\Tburnside^G$, some care has to be taken in which indexing categories we can choose and still have $\bitburnside$ be a category.  
	
	\begin{example}\label{ex:normOfAddition}
		Let $G=C_2$ be the group with two elements and suppose $\compPair = (\mathcal{O}^{gen},\mathcal{O}^{tr})$.  It follows from Example \ref{c2Example} that $\bitburnside$ as described in the above paragraph is \emph{not} a category.  Indeed, if $\nabla\colon C_2/e\amalg C_2/e\to C_2/e$ is the fold map, the example shows that defining the composition $N_e^{C_2}\circ T_{\nabla}$ in $\bitburnside$  requires $T_{e}^{C_2}$. Since $C_2/e\to C_2/C_2$ is not a morphism in $\mathcal{O}^{tr}$ this means the composition in $\Tburnside_{\compPair}$ would not be defined.   
	\end{example}

	The necessary condition to put on a pair of indexing categories $\compPair$ is studied in \cite{blumberghillbiincomplete} and leads to the following definition.
	\begin{defn}\label{defn:compatibleIndexingCategories}
		A pair of indexing categories $\compPair$ is \emph{compatible} if for all maps $n\colon S\to T$ in $\mathcal{O}_m$, we have 
		\[
			n_*((\mathcal{O}_a)_{/S})\subset (\mathcal{O}_a)_{/T}
		\]
		where $n_*$ is the dependent product functor.
	\end{defn}

	\begin{thm}[Theorem 3.5 of \cite{blumberghillbiincomplete}]
		If $\compPair$ is a compatible pair of indexing categories then $\bitburnside\subset \Tburnside$ is a wide  subcategory with morphisms $\bitburnside(X,Y)$ given by isomorphism classes of bispans
		\[
			[X \xleftarrow{r} A \xrightarrow{n} B \xrightarrow{t} Y]
		\] 
		with $n\in \mathcal{O}_m$ and $t\in \mathcal{O}_a$.
	\end{thm}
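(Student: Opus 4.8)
The plan is to verify that the subset of bispans $[X \xleftarrow{r} A \xrightarrow{n} B \xrightarrow{t} Y]$ with $n \in \mathcal{O}_m$ and $t \in \mathcal{O}_a$ is closed under the composition of $\Tburnside^G$, and contains all identities. The identity on $X$ is the bispan $[X = X = X = X]$, in which both the norm and transfer legs are identity maps; identities lie in any indexing category since indexing categories are wide, so the identities are present. The real content is closure under composition, and since every morphism factors as $T_t \circ N_n \circ R_r$ with $t$, $n$ in the appropriate indexing categories, it suffices to check that each of the pairwise interchange rules from the lemmas preceding Definition~\ref{defn:compatibleIndexingCategories} stays inside the prescribed classes. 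Concretely, composing $\omega' \circ \omega = T_{t'}N_{n'}R_{r'}T_tN_nR_r$ and pushing everything into the canonical form $T_{t''}N_{n''}R_{r''}$ requires moving an $R$ past a $T$, moving an $R$ past an $N$, and — the crucial case — moving an $N$ past a $T$.

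The first two are handled by Lemma~\ref{tambaraDoubleCosetFormula}: $R_f \circ T_g = T_k \circ R_h$ and $R_f \circ N_g = N_k \circ R_h$, where $k$, $h$ come from a pullback square with $g$ on one side. When $g \in \mathcal{O}_a$ (resp.\ $g \in \mathcal{O}_m$), pullback stability of the indexing category $\mathcal{O}_a$ (resp.\ $\mathcal{O}_m$) guarantees $k$ lies in the same category, so the transfer (resp.\ norm) leg stays admissible. The restriction legs are never constrained, so composing them via the functoriality $R_{g \circ f} = R_f \circ R_g$ presents no issue, and similarly $T_{g \circ f} = T_g \circ T_f$ and $N_{g \circ f} = N_g \circ N_f$ keep transfers inside $\mathcal{O}_a$ and norms inside $\mathcal{O}_m$ since indexing categories are closed under composition.

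The main obstacle — and the reason the compatibility hypothesis is exactly what is needed — is the interchange of an $N$ past a $T$. By the proposition on exponential diagrams, given composable $A \xrightarrow{t} B \xrightarrow{n} C$ one has $N_n T_t = T_{t'} N_{n'} R_{r'}$, where $t' \colon B \times_C n_*(A) \to n_*(A)$... wait, rather $t'$ is the map exhibiting $n_*(A)$ over $C$, $n'$ is the projection $n^*n_*(A) \to n_*(A)$, and $r'$ is the counit. Here $n'$ is a pullback of $n$, so $n' \in \mathcal{O}_m$ by pullback stability of $\mathcal{O}_m$. The delicate point is $t'$: when $t \in \mathcal{O}_a$, the map $t'$ is precisely $n_*$ applied to $t$ viewed as an object of $(\mathcal{O}_a)_{/B}$ over... more carefully, $t \colon A \to B$ presents $A$ as an object of $\Set^G_{/B}$, but to apply $n_*$ we need an object over the source of $n$, which is $B$; so $t' = n_*(t)$ as a map in $\Set^G_{/C}$. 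Compatibility of $\compPair$, Definition~\ref{defn:compatibleIndexingCategories}, says exactly that $n_*((\mathcal{O}_a)_{/B}) \subset (\mathcal{O}_a)_{/C}$, hence $t' \in \mathcal{O}_a$. Assembling these observations: any composite of bispans with norm legs in $\mathcal{O}_m$ and transfer legs in $\mathcal{O}_a$ reduces, via finitely many applications of the interchange rules, to a single bispan whose norm leg is a composite of pullbacks of maps in $\mathcal{O}_m$ (hence in $\mathcal{O}_m$) and whose transfer leg is a composite of pullbacks of maps in $\mathcal{O}_a$ together with one dependent-product image $t'$ which the compatibility hypothesis places in $\mathcal{O}_a$. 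This shows $\bitburnside$ is closed under composition, completing the verification that it is a wide subcategory of $\Tburnside^G$ with the stated morphism sets.
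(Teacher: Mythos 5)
This statement is quoted from Blumberg--Hill and the paper does not supply its own proof, so there is nothing internal to compare against; your argument is the standard one and it is correct. You correctly identify the three interchange moves, that pullback stability handles $R$ past $T$ and $R$ past $N$, and that the compatibility condition $n_*((\mathcal{O}_a)_{/S})\subset(\mathcal{O}_a)_{/T}$ is used exactly once per composition, to place the transfer leg $t'=n_*(t)$ of the exponential diagram back in $\mathcal{O}_a$.
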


	\begin{defn}
		A semi $\compPair$-Tambara functor is a product preserving functor $S\colon \bitburnside\to \Set$.  A $\compPair$-Tambara functor is a semi $\compPair$-Tambara functor $S$ such that $S(X)$ is group complete for all $G$-sets $X$. When the specific indexing categories are unimportant, we refer to such objects as (semi) bi-incomplete Tambara functors.  The category of $\compPair$-Tambara functors is denoted $\Tamb^G_{\compPair}$.
	\end{defn}

	The main example of interest comes to us from topology.
	\begin{defn}
		A $G$-\emph{universe} $\mathcal{U}$ for a finite group $G$ is a real orthogonal representation of $G$ such that $\mathcal{U}^G\neq 0$ and for any finite dimensional subrepresentation $V\subset \mathcal{U}$ there exist $G$-equivariant embeddings $V^n\to \mathcal{U}$ for all $n$.
	\end{defn}
	
	\begin{example}
		A complete $G$-universe $\mathcal{U}$ is one such that every real orthogonal $G$-representation embeds into $\mathcal{U}$ infinitely many times.  A model for $\mathcal{U}$ is given by taking the infinite direct sum of the real regular representation of $G$. 
	\end{example} 

	\begin{example}
		A trivial $G$-universe $\mathcal{U}_{tr}$ is any $G$-universe with trivial $G$-action.  
	\end{example}

		Universes came about as a way to index different categories of genuine $G$-spectra $\spec_{\mathcal{U}}^G$. Different universes change the associated categories of spectra in a subtle, but important, way related to the dualizability of the the orbits $G/H$.  Briefly, the suspension spectrum $G/H_+\in \spec_{\mathcal{U}}^G$ is dualizable, in fact self-dual, if and only if there is a $G$-equivariant embedding $G/H\to \mathcal{U}$. For full details see \cite{LMayS} Section II.6.
		
		If $\mathcal{U}$ is a complete $G$-universe, then every $G$-orbit has an embedding into $\mathcal{U}$ and so all orbits are self-dual.  As a result, the canonical quotient map $G/K\to G/H$ associated to subgroups $K\leq H$ induces two maps of $G$-spectra:
		\begin{align*}
			r_K^H:G/K_+&\to G/H_+\\
			t_K^H\colon G/H_+&\to G/K_+
		\end{align*}
		For any $G$-spectrum $X$ indexed on $\mathcal{U}$, the assignment
		\begin{equation}\label{eq:homotopyMackey}
			G/H\mapsto [G/H_+,X] 
		\end{equation}
		is a Mackey functor, where restriction and transfer are given by $(r_K^H)^*$ and $(t_K^H)^*$ respectively.  Here, we have used $[-,-]$ to denote the homotopy classes of maps in the equivariant stable homotopy category.  If $X$ is a $G$-$E_{\infty}$ ring spectrum then the assignment \eqref{eq:homotopyMackey} yields a Tambara functor.
		
		When the universe $\mathcal{U}$ is not complete the situation is a bit more subtle.  The universe $\mathcal{U}$ determines two operads, the Steiner operad $S_{\mathcal{U}}$ and linear isometries operad $L_{\mathcal{U}}$, which in turn determine indexing categories we denote $\mathcal{O}_{S}$ and $\mathcal{O}_L$.  The types of indexing categories arising this way are studied in \cite{rubin_detecting}.  
		\begin{proposition}[Proposition 6.16 of \cite{blumberghillbiincomplete}]
			The pair $(\mathcal{O}_{L},\mathcal{O}_S)$ is compatible.
		\end{proposition}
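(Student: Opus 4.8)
\emph{Proof proposal.} This is Proposition 6.16 of \cite{blumberghillbiincomplete}, so one option is simply to cite it; the following sketch also records how the machinery of Section \ref{section:indexingCategories} makes the argument go. Recall the operadic descriptions of the two indexing categories coming from \cite{BlumbergHillIncomplete, rubin_detecting}: a finite $H$-set $T$ is $\mathcal{O}_S$-admissible precisely when $T$ embeds $H$-equivariantly into $i_H^*\mathcal{U}$ (equivalently, the permutation representation $\mathbb{R}[T]$ does), and it is $\mathcal{O}_L$-admissible precisely when $\mathbb{R}[T]\otimes i_H^*\mathcal{U}$ embeds $H$-equivariantly into $i_H^*\mathcal{U}$. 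Since $i_H^*\mathcal{U}$ is an $H$-universe it has a trivial summand, so $\mathcal{O}_L$-admissibility implies $\mathcal{O}_S$-admissibility; thus $\mathcal{O}_L\subseteq\mathcal{O}_S$, and the roles of the two categories in Definition \ref{defn:compatibleIndexingCategories} are exactly what one expects for a single universe (norms from linear isometries, additive transfers from the underlying $E_\infty$-structure).

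To verify compatibility, fix a map $n\colon S\to T$ in $\mathcal{O}_L$ and an object $(p\colon X\to S)$ of $(\mathcal{O}_S)_{/S}$; we must show $n_*p\colon n_*X\to T$ lies in $\mathcal{O}_S$. By Lemma \ref{lem:stabilizerQuotients} this is a condition on the stabilizer-quotients of $n_*X$ at its points, and a point of $n_*X$ over $t\in T$ is a section of $p$ over the fiber $n^{-1}(t)$. Decomposing $S$, $T$, $X$ into orbits and using Lemma \ref{closedUnderSubobjects} and Lemma \ref{lem:closureUnderSelfInduction}, one reduces to the case $S=G/L$, $T=G/L'$ with $L\leq L'$, and $X=G/M$ mapping to $G/L$ by the canonical quotient with $M\leq L$; here $n\in\mathcal{O}_L$ says $L'/L$ is $i_{L'}^*\mathcal{O}_L$-admissible and $p\in\mathcal{O}_S$ says $L/M$ is $i_L^*\mathcal{O}_S$-admissible. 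The fiber of $n_*X$ over the canonical point of $G/L'$ is the coinduced $L'$-set $\map_L(L',L/M)$, whose permutation representation is the multiplicative (``tensor'') induction $\bigotimes_{L'/L}\mathbb{R}[L/M]$ with its twisted $L'$-action, so it suffices to construct an $L'$-equivariant linear isometric embedding $\bigotimes_{L'/L}\mathbb{R}[L/M]\hookrightarrow i_{L'}^*\mathcal{U}$.

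This is where the universe axioms enter. Choose a finite-dimensional $L$-subrepresentation $V\subseteq i_L^*\mathcal{U}$ containing $\mathbb{R}[L/M]$; since tensor induction preserves direct summands it is enough to embed $\bigotimes_{L'/L}V$ into $i_{L'}^*\mathcal{U}$. Using the projection formula to identify $\mathrm{Ind}_L^{L'}(i_L^*\mathcal{U})\cong\mathbb{R}[L'/L]\otimes i_{L'}^*\mathcal{U}$, the hypothesis that $L'/L$ is $i_{L'}^*\mathcal{O}_L$-admissible is exactly the assertion that induction from $L$ to $L'$ carries $\mathcal{U}$-embeddable representations to $\mathcal{U}$-embeddable ones. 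Feeding this in together with the admissibility of $L/M$ and the fact that $i_{L'}^*\mathcal{U}$ absorbs countably many copies of each finite-dimensional subrepresentation, one assembles the desired embedding of $\bigotimes_{L'/L}V$; here the normal-core results of Section \ref{section:indexingCategories} (Lemma \ref{closureUnderConjugation}, Proposition \ref{coreLemma}) are the natural bookkeeping device, since the twisting in the tensor induction is governed by the coset homomorphism $L'\to\Sigma_{[L':L]}$ and so factors through $L'/\core_{L'}(L)$.

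The main obstacle is this last representation-theoretic step: controlling the irreducible constituents of $\bigotimes_{L'/L}V$ in terms of those of $V$ and the condition $\mathbb{R}[L'/L]\otimes i_{L'}^*\mathcal{U}\hookrightarrow i_{L'}^*\mathcal{U}$. One must genuinely use both hypotheses \emph{and} the fact that $V$ contains the permutation representation $\mathbb{R}[L/M]$; for an arbitrary finite-dimensional $V\subseteq i_L^*\mathcal{U}$ the conclusion can fail (already for $C_2\leq C_4$), so the argument has to remember that we are coinducing a $G$-set rather than merely a representation. Equivalently, and closer in spirit to the transfer-system methods of Section \ref{section:transferSystems}, one may run the entire reduction at the level of admissible sets: decompose $\map_L(L',L/M)$ into $L'$-orbits and show each is $i_{L'}^*\mathcal{O}_S$-admissible by repeated application of Lemma \ref{intersectionLemma}, Corollary \ref{intersectionLemma2}, Lemma \ref{lem:closureUnderSelfInduction}, and Proposition \ref{coreLemma}.
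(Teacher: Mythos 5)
In the paper this statement carries no proof at all---it is quoted verbatim as Proposition 6.16 of \cite{blumberghillbiincomplete}---so your first option, simply citing it, is exactly what the paper does and is fine. The trouble is with the sketch you append: it stalls at precisely the step that carries all the content. Having reduced to showing that the coinduced set $\map_L(L',L/M)$ is $\mathcal{O}_S$-admissible, you pass to permutation representations and try to embed the tensor induction $\bigotimes_{L'/L}\mathbb{R}[L/M]$ equivariantly into $i_{L'}^*\mathcal{U}$. The hypotheses you assemble (additive induction preserves $\mathcal{U}$-embeddability, via the projection formula) do not control the multiplicative norm of a representation, and you say so yourself (``the main obstacle is this last representation-theoretic step''). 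The appeal to the normal core and to the reductions of Section \ref{section:indexingCategories} does not close this gap either: those lemmas manipulate admissible \emph{sets} inside a fixed indexing category, whereas what is missing here is the passage from the two universe conditions to a single equivariant embedding. As written, the key implication is asserted, not proved.

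The standard argument avoids tensor induction entirely by staying at the level of sets. Steiner admissibility of the $K$-set $T$ gives a $K$-equivariant injection of $T$ into the underlying $K$-set of $i_K^*\mathcal{U}$. Coinduction $\map_K(H,-)$ is a right adjoint, hence preserves injections, so $\map_K(H,T)$ embeds $H$-equivariantly into $\map_K(H,i_K^*\mathcal{U})$. The latter is the underlying $H$-set of the (additively) coinduced representation $\mathrm{CoInd}_K^H\,i_K^*\mathcal{U}\cong \mathrm{Ind}_K^H\mathrm{Res}_K^H\,i_H^*\mathcal{U}\cong\mathbb{R}[H/K]\otimes i_H^*\mathcal{U}$, and this is exactly the representation that the $\mathcal{O}_L$-admissibility of $H/K$ embeds isometrically into $i_H^*\mathcal{U}$. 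Composing gives an $H$-equivariant embedding of $\map_K(H,T)$ into $i_H^*\mathcal{U}$, i.e.\ $\mathcal{O}_S$-admissibility, and one concludes by Theorem \ref{thm:compatibilityCharacterization}. If you want to go beyond the citation, this is the argument to record; your representation-theoretic route would require additional input that the sketch does not supply.
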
 
	
		Correspondingly, in an incomplete model of the equivariant stable homotopy category such as those studied in \cite{BHIncompleteStable}, the assignment \eqref{eq:homotopyMackey} naturally takes values in $\mathcal{O}_S$-Mackey functors.   If $X$ is an equivariant ring spectrum in this category, we obtain a $(\mathcal{O}_{L},\mathcal{O}_S)$-Tambara functor.
		
\section{Compatibility and Transfer Systems}\label{section:transferSystems}

	Given a pair of indexing categories $\compPair$, it is not straightforward from the definitions to check whether or not the pair is compatible.  Most of the difficulty stems from problems computing the image of the dependent product functor $n_{*}\colon \Set^G_{/S}\to \Set^G_{/T}$ associated to a map $n\colon S\to T$ in $\Set^G$.  The goal of this section is to alleviate this by providing a combinatorial characterization of when two indexing categories are compatible that can be checked without computing any dependent products.
	
	In \cite{blumberghillbiincomplete}, Blumberg and Hill provide an alternative characterization of compatibility which, in light of Example \ref{ex:coinductionAsDependentProduct}, amounts to only checking the dependent products along the canonical projections $n\colon G/K\to G/H$ in $\mathcal{O}_m$. As remarked in that example, such functors are given by the coinduction functor which sends an $K$-set $X$ to the $H$-set $\map_K(H,X)$.
	
	\begin{example}\label{ex:coinductionInC2}
		Let $H=e$ and $G=C_2$, the group with two elements.  Write $*$ for both the trivial $G$-set and trivial $H$-set.  There are four elements of $\map_e(C_2,*\amalg *)$ and a straightforward computation yields
		\[
		\map_e(C_2,*\amalg * )\cong *\amalg *\amalg C_2/e
		\]
		which (essentially) gives the formula from Example \ref{ex:normOfAddition}. More generally, we have an isomorphism:
		\[
			\map_e(C_2,n\cdot[*])\cong n\cdot[*]\amalg \left(\frac{n(n-1)}{2}\right)\cdot [C_2/e]
		\]
		Readers who are more familiar with computations in Tambara functors may recognize that this equation arises in the norm in the $C_2$-Burnside Tambara functor. 
	\end{example}
	
	\begin{thm}[Theorem 5.13 of \cite{blumberghillbiincomplete}]\label{thm:compatibilityCharacterization}
		A pair of indexing categories $\compPair$ is compatible if and only if for every pair of subgroups $K\leq H$ such that $H/K$ is $\mathcal{O}_m$-admissible and every $\mathcal{O}_a$-admissible $K$-set $T$, the coinduced $H$-set $\map_K(H,T)$ is also $\mathcal{O}_a$-admissible. 
	\end{thm}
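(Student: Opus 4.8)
The plan is to translate the dependent product functors appearing in Definition~\ref{defn:compatibleIndexingCategories} into coinduction functors by means of the equivalences $\Set^G_{/(G/H)} \cong \Set^H$, and then to reduce a general map in $\mathcal{O}_m$ to the canonical projections $G/K \to G/H$ by decomposing its source into orbits. As a preliminary, observe that for such a canonical projection $n\colon G/K \to G/H$ with $K \leq H$, the pullback $n^{*}$ is, under these equivalences, the restriction $\Set^H \to \Set^K$, exactly as in Example~\ref{ex:coinductionAsDependentProduct}; hence its right adjoint $n_{*}$ corresponds to coinduction $\map_K(H,-)$. Moreover, $n$ lies in $\mathcal{O}_m$ precisely when $H/K$ is $\mathcal{O}_m$-admissible, and, unwinding the definitions via Lemma~\ref{lem:stabilizerQuotients}, the equivalence $\Set^G_{/(G/H)} \cong \Set^H$ identifies the slice $(\mathcal{O}_a)_{/(G/H)}$ with the full subcategory of $\mathcal{O}_a$-admissible $H$-sets (an $H$-set being $\mathcal{O}$-admissible exactly when its structure map to $G/H$ lies in $\mathcal{O}$, equivalently when each of its orbits is).

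With this dictionary in hand the forward implication is immediate. If $\compPair$ is compatible and $H/K$ is $\mathcal{O}_m$-admissible, then applying the containment $n_{*}\bigl((\mathcal{O}_a)_{/S}\bigr) \subseteq (\mathcal{O}_a)_{/T}$ of Definition~\ref{defn:compatibleIndexingCategories} to the canonical projection $n\colon G/K \to G/H$ and to the object of $(\mathcal{O}_a)_{/(G/K)}$ determined by a given $\mathcal{O}_a$-admissible $K$-set $T$ yields, after translating back, that $\map_K(H,T)$ is $\mathcal{O}_a$-admissible.

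For the reverse implication, suppose the coinduction condition holds and let $n\colon S \to T$ be any map in $\mathcal{O}_m$; the goal is $n_{*}\bigl((\mathcal{O}_a)_{/S}\bigr) \subseteq (\mathcal{O}_a)_{/T}$. Since $n_{*}$ is a right adjoint it respects the decomposition of the base $T$ into orbits --- and, using pullback stability of $\mathcal{O}_a$ together with finite coproduct completeness, $(\mathcal{O}_a)_{/T}$ is the product of the slices over those orbits --- so we reduce at once to $T = G/H$. Under $\Set^G_{/(G/H)} \cong \Set^H$ the map $n$ corresponds to an $\mathcal{O}_m$-admissible $H$-set, which we decompose into orbits as $\coprod_i H/K_i$ with each $H/K_i$ being $\mathcal{O}_m$-admissible by Lemma~\ref{closedUnderSubobjects}; correspondingly an object of $(\mathcal{O}_a)_{/S}$ becomes a finite family of $\mathcal{O}_a$-admissible $K_i$-sets $\bar{A}_i$. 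The key computation is that $n^{*}$ factors as the diagonal $\Set^G_{/(G/H)} \to \prod_i \Set^G_{/(G/H)}$ followed by $\prod_i n_i^{*}$, where $n_i\colon G/K_i \to G/H$ are the canonical projections; passing to right adjoints, $n_{*}$ carries $(\bar{A}_i)_i$ to the fibre product over $G/H$ of the $n_{i*}(\bar{A}_i)$, that is, under $\Set^G_{/(G/H)} \cong \Set^H$, to the finite product $\prod_i \map_{K_i}(H, \bar{A}_i)$ formed in $\Set^H$.

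It remains to check that this last $H$-set is $\mathcal{O}_a$-admissible. Each factor $\map_{K_i}(H, \bar{A}_i)$ is $\mathcal{O}_a$-admissible by the hypothesis, so it suffices to know that finite products of $\mathcal{O}_a$-admissible $H$-sets are again $\mathcal{O}_a$-admissible; distributing the product over coproducts of orbits reduces this to a product of two orbits $H/L \times H/M$, which by the double coset formula of Lemma~\ref{lem:propertiesOfMackeyFunctors} is $\coprod_{\gamma} H/(L \cap M^{\gamma})$ over double coset representatives $\gamma \in H$, and each summand is $\mathcal{O}_a$-admissible by Lemma~\ref{closureUnderConjugation} (to pass from $M$ to $M^{\gamma}$) followed by Corollary~\ref{intersectionLemma2} applied inside the $H$-indexing category $i_H^{*}\mathcal{O}_a$ (legitimate by Example~\ref{ex:restrictedIndexingCat}), a finite coproduct of admissible sets being admissible. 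I expect the main obstacle to be the key computation identifying $n_{*}$ with a product of coinductions: one must establish that the dependent product distributes over coproducts in its base and carefully track the chain of equivalences. The remaining ingredient --- closure of the admissible $H$-sets under finite products --- is routine but genuinely needed, since coinduction famously fails to preserve coproducts, so the factors $\map_{K_i}(H, \bar{A}_i)$ cannot be split further according to the orbit decompositions of the $\bar{A}_i$.
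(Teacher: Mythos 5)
The paper does not actually prove this statement; it is imported verbatim as Theorem 5.13 of Blumberg--Hill, so there is no in-paper argument to compare against. That said, your proof is correct and is essentially the standard route to this result. The forward direction is, as you say, just the specialization of Definition \ref{defn:compatibleIndexingCategories} to canonical orbit projections under the dictionary $\Set^G_{/(G/H)}\cong\Set^H$. For the converse, the two reductions you make are exactly the right ones: (i) dependent products commute with the orbit decomposition of the \emph{target} (fiberwise, $n_*A$ over $t$ is the set of sections of $A$ over $n^{-1}(t)$, which only sees the component of $T$ containing $t$), and (ii) for a source decomposed as $\coprod_i G/K_i$ over $G/H$, the right adjoint of $n^*=(\prod_i n_i^*)\circ\Delta$ is the fiber product over $G/H$ of the $(n_{i})_*$, i.e.\ the product in $\Set^H$ of the coinductions $\map_{K_i}(H,\bar A_i)$. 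The remaining input --- that finite products of $\mathcal{O}_a$-admissible $H$-sets are admissible --- you correctly supply from the double coset decomposition $H/L\times H/M\cong\coprod_\gamma H/(L\cap M^\gamma)$ together with Lemma \ref{closureUnderConjugation}, Corollary \ref{intersectionLemma2}, and finite coproduct completeness; and you correctly note that each $H/K_i$ is $\mathcal{O}_m$-admissible (Lemma \ref{closedUnderSubobjects}) and each $n_i$ lies in $\mathcal{O}_m$ by pullback stability, so the coinduction hypothesis applies to each factor. This is a faithful reconstruction of the Blumberg--Hill argument, and correctly identifies why the statement cannot be checked orbitwise in the source (coinduction does not preserve coproducts) while it can be in the target.
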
  

	Unfortunately, this characterization of compatibility can still be difficult to check in practice due to the somewhat unwieldy nature of the coinduction functors.  In particular, Example \ref{ex:coinductionInC2} shows the coinduction functor does not preserve coproducts.  Because of this, one of the standard techniques of equivariant algebra, separating a $G$-set into its orbits, often does not work when considering coinduction.  In this section we provide a characterization of compatibility, Definition \ref{defn:compatibleTransferSystems} below, that is more easily checked and does not make any reference to the coinduction functors.
	
	The main result of this section is stated in terms of transfer systems, a notion we now recall.  
	\begin{defn}
		For a finite group $G$, a $G$-transfer system $\mathcal{T}$ is a partial order $\leq_{\mathcal{T}}$ on the set of subgroups of $G$ such that
		\begin{enumerate}
			\item (refines subset relation) if $K\leq_{\mathcal{T}}H$ then $K\leq H$,
			\item (closure under conjugation) if $K\leq_{\mathcal{T}} H$, then for all $g\in G$ we have $K^g \leq_{\mathcal{T}} H^g$,
			\item (closure under intersection) If $K \leq_{\mathcal{T}} H$ then for all $L\leq G$ we have $(K\cap L)\leq_{\mathcal{T}}(H\cap L)$.
		\end{enumerate}
		
	\end{defn}
 
 	\begin{notn}
 	 	If  $\mathcal{T}$ is a $G$-transfer system it is convenient to represent $\mathcal{T}$ as a directed graph with a node for each conjugacy class of subgroup $H\leq G$ and an arrow $K\to H$ if and only if $K\leq_{\mathcal{T}}H$.  Figure \ref{fig:transferSystemExamples} gives one example and one non-example of transfer systems for the cyclic group $C_4$ drawn in this notation.  
 	 	
 	 	Later, we will have two transfer systems $\mathcal{T}_1$ and $\mathcal{T}_2$ such that $\mathcal{T}_1$ refines in $\mathcal{T}_2$ in the sense that if $K\leq_{\mathcal{T}_1} H$ then $K\leq_{\mathcal{T}_2} H$.  In such cases, we write $\mathcal{T}_1\leq \mathcal{T}_2$ and represent both transfer systems as a single directed graph with two sets of arrows:  a solid arrow $K\rightarrow H$ if $K\leq_{\mathcal{T}_1}H$ and a dashed arrow $K\dashrightarrow H$ if $K\leq_{\mathcal{T}_2}H$ but  not necessarily $K\leq_{\mathcal{T}_1}H$. When we need to be clear about whether or not $K\leq_{\mathcal{T}_1}H$ we write it explicitly. An example is given in Figure \ref{fig:twoTransferSystemsInOne}.
 	\end{notn}

	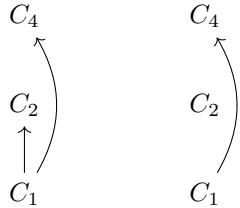
\begin{figure}[h]
		\begin{tikzcd}
			C_4 & & C_4 \\
			C_2 & & C_2 \\
			C_1\ar{u} \ar[bend right]{uu} & & C_1 \ar[bend right]{uu}
		\end{tikzcd}
		\caption{Two poset structures on the set of subsets of the cyclic group $C_4$.  The left graph is a transfer system.  The right graph is not a transfer system as it fails to satisfy closure under intersection.}
		\label{fig:transferSystemExamples}
	\end{figure}

	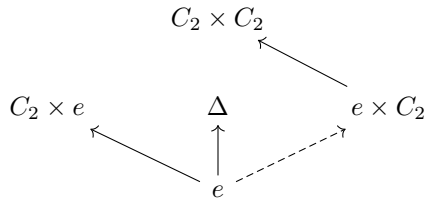
\begin{figure}[h]
		\begin{tikzcd}
			 & C_2\times C_2 & \\
			 C_2\times e & \Delta & e\times C_2 \ar{ul} \\
			 & e \ar{ul} \ar{u} \ar[dashed]{ur} &  
		\end{tikzcd}
		\caption{Two transfer systems $\mathcal{T}_1\leq \mathcal{T}_2$ on the group $C_2\times C_2$.  The group $\Delta$ is the diagonal subgroup given by the image of the diagonal map $C_2\to C_2\times C_2$.  This pair of transfer systems is not compatible.}
		\label{fig:twoTransferSystemsInOne}
	\end{figure}

	Independent work of Rubin and Balchin--Barnes--Roitzheim has shown that, given a transfer system $\mathcal{T}$, one can build an indexing category $\mathcal{O}$ generated by maps $G/K\to G/H$ such that $K\leq_{\mathcal{T}}H$.  This process can also be turned around, producing a transfer system from every indexing category, and these two constructions are mutual inverses.
	\begin{thm}[Theorem 3.7 of \cite{rubin_detecting}]
		The posets of indexing categories and transfer systems for a finite group $G$ are equivalent.
	\end{thm}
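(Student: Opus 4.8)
The plan is to build an explicit order isomorphism between the two posets, with Lemma~\ref{lem:stabilizerQuotients} as the linchpin: that lemma says an indexing category $\mathcal{O}$ is completely recorded by which quotients $H/K$ (for $K\leq H\leq G$) are $\mathcal{O}$-admissible, and conversely that membership of an arbitrary map in $\mathcal{O}$ is detected entirely by this data. So one assigns to $\mathcal{O}$ the relation $\mathcal{T}_{\mathcal{O}}$ on subgroups of $G$ with $K\leq_{\mathcal{T}_{\mathcal{O}}}H$ precisely when $K\leq H$ and $H/K$ is $\mathcal{O}$-admissible, and to a transfer system $\mathcal{T}$ the class $\mathcal{O}_{\mathcal{T}}$ of all maps $f\colon S\to T$ in $\Set^G$ with $G_s\leq_{\mathcal{T}}G_{f(s)}$ for every $s\in S$. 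Throughout I would use the identification that $H/K$ is $\mathcal{O}$-admissible precisely when the canonical quotient $G/K\to G/H$ lies in $\mathcal{O}$, which holds because the structure map is the only $G$-map $G\times_H X\to G/H$ over $G/H$, and which is how admissibility is used in the proofs above.

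Next I would verify that $\mathcal{T}_{\mathcal{O}}$ is a transfer system. Reflexivity holds because identities lie in $\mathcal{O}$, and antisymmetry follows from antisymmetry of $\subseteq$. For transitivity, given $K\leq_{\mathcal{T}_{\mathcal{O}}}L\leq_{\mathcal{T}_{\mathcal{O}}}M$ one applies closure under self-induction (Lemma~\ref{lem:closureUnderSelfInduction}) to the $\mathcal{O}$-admissible $M$-set $M/L$ and the $\mathcal{O}$-admissible $L$-set $L/K$, obtaining that $M\times_L(L/K)\cong M/K$ is $\mathcal{O}$-admissible. Refinement of the subset relation is immediate from the definition, closure under conjugation is Lemma~\ref{closureUnderConjugation}, and closure under intersection is Lemma~\ref{intersectionLemma}. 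Then I would verify that $\mathcal{O}_{\mathcal{T}}$ is an indexing category: it is wide by construction, contains all identities by reflexivity of $\leq_{\mathcal{T}}$, and is closed under composition by transitivity of $\leq_{\mathcal{T}}$. For pullback stability, consider pulling back $f\colon X\to Z$ in $\mathcal{O}_{\mathcal{T}}$ along an arbitrary $g\colon Y\to Z$; a point of $X\times_Z Y$ is a pair $(x,y)$ with $f(x)=g(y)$, its stabilizer is $G_x\cap G_y$, and it maps to $y$ with stabilizer $G_y$. Since $G_y\subseteq G_{g(y)}=G_{f(x)}$ and $G_x\leq_{\mathcal{T}}G_{f(x)}$, closure under intersection with $L=G_y$ gives $G_x\cap G_y\leq_{\mathcal{T}}G_{f(x)}\cap G_y=G_y$, as required. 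Finite coproduct completeness is formal, since both disjoint unions and stabilizers are computed orbitwise and the empty coproduct and the coproduct injections present no difficulty.

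Finally, I would check the two assignments are mutually inverse and monotone, hence an order isomorphism. A map $f$ lies in $\mathcal{O}_{\mathcal{T}_{\mathcal{O}}}$ exactly when $G_{f(s)}/G_s$ is $\mathcal{O}$-admissible for all $s$, which is exactly $f\in\mathcal{O}$ by Lemma~\ref{lem:stabilizerQuotients}; and $K\leq_{\mathcal{T}_{\mathcal{O}_{\mathcal{T}}}}H$ exactly when the quotient $G/K\to G/H$ lies in $\mathcal{O}_{\mathcal{T}}$, i.e.\ when $K^g\leq_{\mathcal{T}}H^g$ for all $g\in G$, which by closure under conjugation and refinement of $\subseteq$ is exactly $K\leq_{\mathcal{T}}H$. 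An inclusion $\mathcal{O}\subseteq\mathcal{O}'$ plainly forces $\mathcal{T}_{\mathcal{O}}\leq\mathcal{T}_{\mathcal{O}'}$, and conversely Lemma~\ref{lem:stabilizerQuotients} shows $\mathcal{T}_{\mathcal{O}}\leq\mathcal{T}_{\mathcal{O}'}$ forces $\mathcal{O}\subseteq\mathcal{O}'$; so both assignments are monotone, and together with mutual inverseness they form the desired order isomorphism.

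I expect the main obstacle to be conceptual rather than computational: the argument hinges on freely passing between the ambient-category description of an indexing category (wide, pullback stable, finite coproduct complete) and the pointwise-on-stabilizers description furnished by Lemma~\ref{lem:stabilizerQuotients}, and the place where this passage is most delicate is the verification that $\mathcal{O}_{\mathcal{T}}$ is pullback stable. That is precisely the step which forces the transfer system axiom of closure under intersection, and one must be careful that the stabilizer bookkeeping $G_{(x,y)}=G_x\cap G_y$ in a pullback square lines up with the hypothesis ``$L\leq G$'' of that axiom.
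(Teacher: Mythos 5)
The paper does not prove this statement at all --- it is quoted from Rubin (and, independently, Balchin--Barnes--Roitzheim) --- so there is no internal proof to compare against. Your construction is the standard one from those sources and is correct: the dictionary $\mathcal{O}\mapsto\mathcal{T}_{\mathcal{O}}$ via admissibility of $H/K$, the inverse dictionary via the stabilizer criterion of Lemma~\ref{lem:stabilizerQuotients}, transitivity from self-induction (Lemma~\ref{lem:closureUnderSelfInduction}), the transfer-system axioms from Lemmas~\ref{closureUnderConjugation} and~\ref{intersectionLemma}, and the derivation of pullback stability from closure under intersection (with the computation $G_{(x,y)}=G_x\cap G_y$ and $G_{f(x)}\cap G_y=G_y$) are exactly the steps in the published proofs. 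The one place to tighten is your opening justification: it is not true that the structure map is the only $G$-map $G\times_HX\to G/H$ (for $X=H/e$ there are $|G/H|$ of them); it is the unique such map \emph{over} $G/H$, but the paper's definition of admissibility does not impose that condition. The intended convention, consistent with Blumberg--Hill, is that admissibility means the structure map itself lies in $\mathcal{O}$, and with that reading your identification ``$H/K$ admissible $\iff$ the canonical quotient $G/K\to G/H$ lies in $\mathcal{O}$'' is the definition rather than something needing an argument; under the literal ``there exists a morphism'' reading one would still have to rule out the non-canonical maps $xK\mapsto xgH$, which amount to canonical quotients $G/K^{g}\to G/H$ for conjugates $K^g\leq H$ not necessarily conjugate to $K$ inside $H$. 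This is a wrinkle in the paper's phrasing rather than a gap in your argument, but it is worth stating the convention explicitly before using it.
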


	The theorem above implies that we could just as easily have defined bi-incomplete Tambara functors in terms of the pair of transfer systems $\tCompPair$ that are equivalent to $\compPair$.  Unwinding the definitions, we see that the category $\bitburnside$ has a transfer $T_K^H$ if and only if $K\leq_{\mathcal{T}_a} H$ and a norm $N_K^H$ if and only if  $K\leq_{\mathcal{T}_m} H$.  Of course, describing $\bitburnside$ in terms of transfer systems requires that we have a notion of compatible transfer systems.
	
	\begin{defn}\label{defn:compatibleTransferSystems}
		Let $\mathcal{T}_1$ and $\mathcal{T}_2$ be two $G$-transfer systems.   We say $(\mathcal{T}_1, \mathcal{T}_2)$ is a \emph{compatible} pair if
		\begin{enumerate}
			\item $\mathcal{T}_1\leq \mathcal{T}_2$,
			\item whenever $A$ is a subgroup of $G$ and $B,C\leq A$ are subgroups such that $B\leq_{\mathcal{T}_1} A$ and $(B\cap C) \leq_{\mathcal{T}_2} B$ then $C\leq_{\mathcal{T}_2} A$. 
		\end{enumerate}
	\end{defn}

	We pause to explain the second requirement of compatibility in terms of our graphical representations of transfer systems.  The conditions that $B\leq_{\mathcal{T}_1} A$ and $(B\cap C) \leq_{\mathcal{T}_2} B$ can be represented graphically by saying the following subgraph appears in the directed graph representing these transfer systems:
	\begin{equation} \label{incompleteSquare}
		\begin{tikzcd}[column sep = small]
			 & A & \\
			 C & & B\ar{ul}\\
			 & B\cap C \ar{ul}\ar[dashed]{ur}& 
		\end{tikzcd}
	\end{equation}
	Here the solid arrow $B\cap C\rightarrow C$ exists by intersection of the solid arrow $B\rightarrow A$ with $C$. Two transfer systems $\mathcal{T}_1\leq \mathcal{T}_2$ are compatible if any subgraph of the form \eqref{incompleteSquare} is actually a part of a diamond:
	\begin{equation} \label{completeSquare}
	\begin{tikzcd}[column sep = small]
	& A & \\
	C \ar[dashed]{ur} & & B\ar{ul}\\
	& B\cap C \ar{ul}\ar[dashed]{ur}& 
	\end{tikzcd}
	\end{equation}
	Figure \ref{fig:A5} gives an example of a compatible pair of transfer systems $(\mathcal{T}_1,\mathcal{T}_2)$ for the alternating group $G=A_5$.  We stress that once such a diagram is drawn it is an easy matter to check that the two transfer systems are compatible, as the graphs of the transfer systems can simply be checked visually.  An example of a pair of transfer systems that are not compatible is found in Figure \ref{fig:twoTransferSystemsInOne}.
	
	\begin{figure}[h]
	\begin{tikzcd}
		& A_5                                           &                          &  &                       & A_5                       &                \\
		D_5 \arrow[ru] & A_4 \arrow[u]                                 & S_3 \arrow[lu]           &  & D_5                   & A_4                       & S_3 \arrow[lu] \\
		C_5 \arrow[u]  & C_2^2 \arrow[u]                               & C_3 \arrow[lu] \arrow[u] &  & C_5 \arrow[u, dashed] & C_2^2                     & C_3 \arrow[lu] \\
		& C_2 \arrow[u] \arrow[ruu] \arrow[luu]         &                          &  &                       & C_2 \arrow[u] \arrow[luu] &                \\
		& e \arrow[u] \arrow[u] \arrow[ruu] \arrow[luu] &                          &  &                       & e \arrow[u] \arrow[luu]   &               
	\end{tikzcd}
		\caption{An example of compatible transfer systems for the alternating group $A_5$ is drawn on the right.  The poset of conjugacy classes of subgroups of the alternating group $A_5$ is drawn on the left for reference.}
		\label{fig:A5}
	\end{figure}
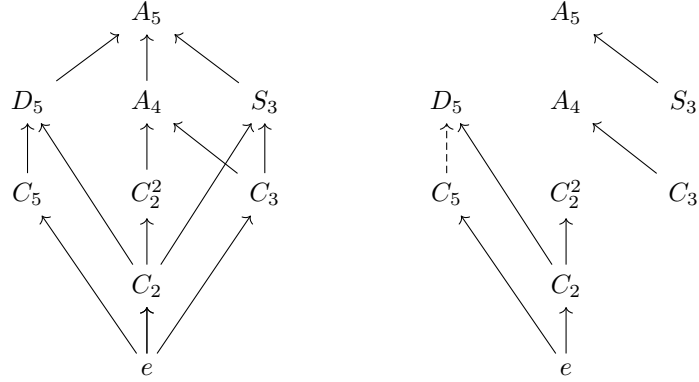

	Our definition of compatible transfer systems will only have value if we can show it is related to the notion of compatible indexing categories.  In the remainder of this section we show they are in fact equivalent.  As a first step, we have:
	\begin{proposition}\label{prop:catImpliesSystem}
		Suppose $\compPair$ is a pair of indexing categories with corresponding pair of transfer systems $\tCompPair$.  If $\compPair$ is compatible then $\tCompPair$ is as well.
	\end{proposition}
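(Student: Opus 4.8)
The plan is to check the two conditions in Definition \ref{defn:compatibleTransferSystems} directly, translating everything back into statements about admissible sets. Throughout, recall that for the transfer system $\mathcal{T}$ attached to an indexing category $\mathcal{O}$, one has $K \leq_{\mathcal{T}} H$ precisely when $H/K$ is $\mathcal{O}$-admissible. Condition (2) of Definition \ref{defn:compatibleTransferSystems} reads: for subgroups $B, C \leq A \leq G$ with $B \leq_{\mathcal{T}_m} A$ and $(B\cap C) \leq_{\mathcal{T}_a} B$, one has $C \leq_{\mathcal{T}_a} A$. I will prove this; condition (1), that $\mathcal{T}_m \leq \mathcal{T}_a$, is then the special case $C = B$ (its hypothesis $(B\cap B)\leq_{\mathcal{T}_a} B$ being automatic), so nothing further is needed.

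To set up the main case, fix $B, C \leq A$ and put $D = B \cap C$. The hypotheses say that $A/B$ is an $\mathcal{O}_m$-admissible $A$-set and $B/D$ is an $\mathcal{O}_a$-admissible $B$-set, and the goal is that $A/C$ is an $\mathcal{O}_a$-admissible $A$-set. The idea is to run Blumberg--Hill's coinduction criterion (Theorem \ref{thm:compatibilityCharacterization}) on a carefully chosen $B$-set and then locate $A/C$ inside the coinduced $A$-set. I would take $T = (B/D) \amalg (B/B)$: this is $\mathcal{O}_a$-admissible over $B$ because $B/D$ is by hypothesis, the trivial $B$-set $B/B$ always is, and $\mathcal{O}_a$-admissibility is closed under finite disjoint unions (indexing categories are finite coproduct complete). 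Since $A/B$ is $\mathcal{O}_m$-admissible, Theorem \ref{thm:compatibilityCharacterization}, applied to the pair $B \leq A$, then gives that $\map_B(A, T)$ is an $\mathcal{O}_a$-admissible $A$-set, and by Lemma \ref{closedUnderSubobjects} so is each of its orbits. Thus it will be enough to exhibit $A/C$ as an orbit of $\map_B(A, T)$.

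This last step is the crux, and I would handle it through the adjunction $\mathrm{Res}^A_B \dashv \map_B(A,-)$: producing an orbit of $\map_B(A, T)$ isomorphic to $A/C$ amounts to producing a $B$-map $g\colon \mathrm{Res}^A_B(A/C) \to T$ whose adjoint $A/C \to \map_B(A,T)$ is injective. The $B$-orbit of the coset $eC$ in $\mathrm{Res}^A_B(A/C)$ has $B$-stabilizer $B\cap C = D$, so it is $B$-isomorphic to $B/D$ via $bC \mapsto bD$; I would let $g$ be this isomorphism onto the summand $B/D$ of $T$ on that orbit, and the constant map to the $B$-fixed point of $B/B$ on the (automatically $B$-stable) complement. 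Tracing the adjunction, the element $\phi \in \map_B(A, T)$ corresponding to $g$ is the function $a \mapsto g(aC)$, and the point is that $\mathrm{Stab}_A(\phi) = C$: the inclusion $C \subseteq \mathrm{Stab}_A(\phi)$ is immediate, and if $a$ stabilizes $\phi$ then evaluating $\phi(xa) = \phi(x)$ at $x = e$ forces $g(aC) = g(eC)$, which sits in the summand $B/D$ at the basepoint $D$; that forces $aC$ into the $B$-orbit of $eC$, i.e.\ $a \in BC$, and then $a \in DC = C$. Hence the $A$-orbit of $\phi$ is isomorphic to $A/C$.

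Combining the pieces, $A/C$ is an orbit of the $\mathcal{O}_a$-admissible $A$-set $\map_B(A, T)$, so $A/C$ is $\mathcal{O}_a$-admissible, i.e.\ $C \leq_{\mathcal{T}_a} A$; specializing to $C = B$ yields $\mathcal{T}_m \leq \mathcal{T}_a$, which finishes the argument. The step I expect to require the most care is the construction of $T$ together with the stabilizer computation: giving $T$ two summands --- a ``generic'' copy of $B/D$ and a ``collapsing'' copy of $B/B$ --- is exactly what prevents some larger quotient $A/C'$ (with $C \subsetneq C' \leq A$) from appearing in place of $A/C$, which is what makes the stabilizer come out on the nose. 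Should that computation turn out to be awkward in some configuration, a fallback is to first push $B$ and $C$ to normal subgroups using Proposition \ref{coreLemma} before analyzing the coinduction, but the two-summand choice above appears to make that unnecessary.
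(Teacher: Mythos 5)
Your proof is correct, and it takes a genuinely different route from the one in the paper. The paper first proves the special case $A=BC$ (Lemma \ref{lem:productCase}) by coinduing the transitive $B$-set $B/D$ and exhibiting an element with stabilizer $C$, and then handles the general case by induction on the index $[A:C]$, coinducing the trivial $B$-set on the double cosets $B\backslash A/C$ to produce an intermediate subgroup $C\leq A_\mu\subsetneq A$ and composing two relations in $\mathcal{T}_a$. Your choice of target $T=(B/D)\amalg (B/B)$ collapses both steps into one: the $B/D$ summand plays the role of the paper's map $\mu(bc)=bD$ on the cosets lying in $BC/C$, while the $B/B$ summand absorbs the cosets outside $BC/C$ --- precisely the obstruction that forces the paper into the product case plus induction --- and the stabilizer computation then comes out to be exactly $C$ in a single application of Theorem \ref{thm:compatibilityCharacterization} and Lemma \ref{closedUnderSubobjects}. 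The supporting details all check out: $T$ is $\mathcal{O}_a$-admissible by hypothesis, triviality of $B/B$, and finite coproduct completeness; the map $g$ is well defined and $B$-equivariant since $\mathrm{Stab}_B(eC)=D$; and the computation $\mathrm{Stab}_A(\phi)=C$ is right (if $\phi(a)=\phi(e)=eD$ then $aC$ lies in the $B$-orbit of $eC$, so $a=bc$ with $bD=eD$, whence $a\in DC=C$). Deriving condition (1) as the case $C=B$ of condition (2) is also fine and makes the argument self-contained, whereas the paper cites Corollary 6.4 of \cite{blumberghillbiincomplete} for this. The trade-off is minor: the paper's induction isolates the product case, which is reused conceptually elsewhere, while your argument is shorter and avoids the induction on index entirely.
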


	Before proceeding with the proof, we note that the condition $\mathcal{T}_m\leq \mathcal{T}_a$ follows from Corollary 6.4 of \cite{blumberghillbiincomplete} which says we have $\mathcal{O}_m\leq \mathcal{O}_a$.  It suffices then to show the pair $\tCompPair$ meets condition (2) of Definition \ref{defn:compatibleTransferSystems}.
	
	\begin{lem}\label{lem:productCase}
		If $\compPair$ is compatible then $\tCompPair$ meets condition (2) of Definition \ref{defn:compatibleTransferSystems} in the special case where $A=BC$ is the product of the subgroups $B$ and $C$.  That is, if the product $BC=\{bc : b\in B, C\in C\}$ is a subgroup of $G$ such that $B\leq_{\mathcal{T}_m} BC$ and $(B\cap C) \leq_{\mathcal{T}_a} B$ then $C\leq_{\mathcal{T}_a} BC$. 
	\end{lem}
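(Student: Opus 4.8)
The plan is to translate the hypotheses into statements about $\mathcal{O}_a$-admissible sets, then feed a carefully chosen $K$-set into the coinduction characterization of compatibility from Theorem \ref{thm:compatibilityCharacterization}. Concretely, set $K = B$ and $H = BC = A$; since $B \leq_{\mathcal{T}_m} BC$ means exactly that $BC/B$ is an $\mathcal{O}_m$-admissible $BC$-set, the pair $B \leq BC$ satisfies the hypothesis needed to invoke Theorem \ref{thm:compatibilityCharacterization}. The hypothesis $(B\cap C)\leq_{\mathcal{T}_a} B$ says $B/(B\cap C)$ is an $\mathcal{O}_a$-admissible $B$-set. So Theorem \ref{thm:compatibilityCharacterization} tells us the coinduced $BC$-set $\map_B(BC, B/(B\cap C))$ is $\mathcal{O}_a$-admissible. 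The goal, $C \leq_{\mathcal{T}_a} BC$, says $BC/C$ is an $\mathcal{O}_a$-admissible $BC$-set; by Lemma \ref{closedUnderSubobjects} it suffices to exhibit $BC/C$ as (isomorphic to) an orbit of $\map_B(BC, B/(B\cap C))$.

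Thus the heart of the argument is a purely combinatorial computation of the orbit decomposition of $\map_B(BC, B/(B\cap C))$ as a $BC$-set, and verifying that $BC/C$ appears among the orbits. Here the product structure $A = BC$ is exactly what makes the computation tractable: an element of $\map_B(BC, B/(B\cap C))$ is a $B$-equivariant function from $BC$ (with its left $B$-action) to the coset space $B/(B\cap C)$, and the stabilizer in $BC$ (acting on the right, suitably interpreted) of the function sending $b \mapsto b(B\cap C)$ should work out to be exactly $C$. I would write $|BC/(B\cap C)\text{-side}|$ in terms of $[B:B\cap C]$ and use that $|BC| = |B|\,|C|/|B\cap C|$, so that $[BC : B] = [C : B\cap C]$; the number of $B$-orbits on $BC$ is therefore $[C:B\cap C]$, and a $B$-equivariant map out of $BC$ is determined freely by its values on a set of $B$-orbit representatives, giving $|\map_B(BC, B/(B\cap C))| = [B:B\cap C]^{[C:B\cap C]}$. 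Among these, the "diagonal-like" function $f_0$ defined using the inclusion $B\cap C \hookrightarrow B$ should have $BC$-stabilizer equal to $C$: if $c \in C$, translating $f_0$ by $c$ permutes the $B$-cosets in $BC$ but the values are corrected precisely because $B\cap C$ normalizes nothing extra here — I expect one checks directly that $c\cdot f_0 = f_0$ iff $c \in C$, using that on the coset $bc(B\cap C)$ the value is pinned down.

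The step I expect to be the main obstacle is this stabilizer computation — getting the bookkeeping of left versus right actions right and confirming that no element outside $C$ fixes $f_0$, and dually that every element of $C$ does. A clean way to organize it: identify $\map_B(BC, B/(B\cap C))$ with $\map_B(BC, \map_{B\cap C}(B, \ast))$, use the adjunction/associativity of coinduction to rewrite this as $\map_{B\cap C}(BC, \ast)$ — which is the coinduction of the trivial $(B\cap C)$-set up to $BC$ — and then the $BC$-set $\map_{B\cap C}(BC, \ast)$ manifestly contains the orbit $BC/(B\cap C)$ as the "constant" functions... but we want $BC/C$, not $BC/(B\cap C)$, so I would instead keep the target as $B/(B\cap C)$ rather than collapsing it, and isolate the orbit through $f_0$ by hand. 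Once the orbit $BC/C$ is located inside the admissible set $\map_B(BC, B/(B\cap C))$, Lemma \ref{closedUnderSubobjects} closes the argument immediately.
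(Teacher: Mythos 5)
Your proposal is correct and follows essentially the same route as the paper: invoke Theorem \ref{thm:compatibilityCharacterization} to get admissibility of $\map_B(BC, B/(B\cap C))$, then locate $BC/C$ as an orbit via Lemma \ref{closedUnderSubobjects} by exhibiting the function $\mu(bc) = b(B\cap C)$ and checking its stabilizer is exactly $C$. The stabilizer computation you flag as the main obstacle is in fact short: well-definedness follows because $b$ is unique up to right multiplication by $B\cap C$, the containment $C\subseteq A_\mu$ is immediate since $\mu$ ignores the $C$-factor, and for $x=bc\notin C$ one has $b\notin B\cap C$, so $(x\cdot\mu)(e)=bD\neq eD=\mu(e)$.
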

	\begin{proof}
		Let $A=BC$, our goal is to show $A/C$ is $\mathcal{O}_a$-admissible.  Writing $D=B\cap C$, we have, by Theorem \ref{thm:compatibilityCharacterization}, that the set $ \map_B(A,B/D)$ is $\mathcal{O}_a$-admissible.  Since indexing categories are closed under subobjects (Lemma \ref{closedUnderSubobjects}), we are done if we can show there is an element $\mu\in \map_B(A,B/D)$ whose stabilizer $A_{\mu}\subset A$ is exactly $C$ as this implies
		\[
			A/A_{\mu} =(BC)/C\subset \map_B(A,B/D)
		\]
		is $\mathcal{O}_a$ admissible.

		Given any element $x\in A$ we can write $x=bc$ for some $c\in C$ and $b\in B$.  We define the map $\mu\colon A\to B/D$ by 
		\[
		\mu(x) = \mu(bc) = bD
		\]
		Since $B\cap C =D$, the element $b$ is unique up to  right multiplication by an element of $D$ and so this map does not depend on the choice of $b$.  Since the element $c$ has no bearing on the value of $\mu$ we have $\mu(xc') = \mu(x)$ for any $c'\in C$ and thus $C\subset A_{\mu}$.  
		
		To show the reverse inclusion, take any $x\notin C$ and, as before, write $x=bc$.  Since $x\notin C$, we have $b\notin D$ and thus
		\[
			(x\cdot \mu)(e) = \mu(x) = bD\neq eD = \mu(e)
		\]
		and so $x\notin A_{\mu}$.  It follows that $A_{\mu}\subset C$ proving that $A_{\mu}=C$.
	\end{proof}

\begin{proof}[Proof of Proposition \ref{prop:catImpliesSystem}]
	
	By hypothesis we have a subgraph
	\[
	\begin{tikzcd}[column sep = small]
	& A & \\
	C & & B\ar{ul}\\
	& B\cap C \ar{ul}\ar[dashed]{ur}& 
	\end{tikzcd}
	\]
	in our transfer systems $\tCompPair$ and our goal is to show there is a dashed arrow $C\dashrightarrow A$.
	
	We proceed by induction on the index $k =[A:C]$, where the base case is trivial.  Supposing that $k>1$, let $n$ be the number of double cosets $B\backslash A/ C$ and write $\underline{n} = \{1,2,\dots, n\}$ for the trivial $B$-set with $n$ elements.  Note that $\underline{n}$ is admissible in all indexing categories by closure under finite coproducts.  We may assume $n>1$ as $n=1$ occurs only if $A=BC$, which was covered in Lemma \ref{lem:productCase}. 
	
	By Theorem \ref{thm:compatibilityCharacterization}, the set $ \map_B(A,\underline{n})$ is $\mathcal{O}_a$-admissible. Our goal is produce an element $\mu\in \map_B(A,\underline{n})$ so that $C\subset A_{\mu}\subsetneq A$.  If we can produce such a $\mu$, we have the following subgraph in our transfer systems
	\[
	\begin{tikzcd}
		& A &  \\
		A_{\mu} \ar[dashed, "(2)"]{ur}& &  B \ar{ul}\\
		C \ar[dashed,"(1)"]{u} &B\cap A_{\mu} \ar{ul} &    \\
		& B\cap C \ar{ul} \ar[dashed]{u} \ar[ dashed]{uur}&  
	\end{tikzcd}
	\]
	
	 where the arrow labeled $(1)$ exists by induction hypothesis as the index $[A_{\mu}:C]$ is less than $k$, the arrow labeled $(2)$ exists since $A_{\mu}$ is the stabilizer of an object in $M$, which is an $\mathcal{O}_a$-admissible $A$-set, and all other arrows exist either by hypothesis or by the closure under intersection property of transfer systems.   Composing arrows $(1)$ and $(2)$ yields the result.
	
	To define $\mu$, let $a_1,a_2, \dots, a_n$ be representatives for the double cosets $B\backslash A/C$.  For simplicity, take $a_1=e$ to be the unit.  For any $x\in A$, there is a unique $i_x\in \underline{n}$ so that $x\in Ba_{i_x}C$ and we define $\mu(x) = i_x$.  Since the double cosets are fixed under left multiplication by $B$ we see that $\mu$ is $B$-equivariant.  Similarly, as the double cosets are fixed under right multiplication by $C$ we have $C\subset A_{\mu}$.  Finally, we note the containment $A_{\mu}\subset A$ is strict as
	\[
	(a_2\cdot \mu)(e) = \mu(a_2) = 2\neq 1 = \mu(e)
	\]
	so $a_2\cdot \mu\neq \mu$.  We have used here the assumption that $n>1$.
\end{proof}

To show the reverse direction, that compatible transfer systems yield compatible indexing categories, we prove a stronger result. This leads to an affirmative answer to Conjecture 7.4 of \cite{blumberghillbiincomplete}.

\begin{proposition} \label{prop:systemImpliesCat}
	Suppose that $\tCompPair$ is a compatible pair of transfer systems with corresponding indexing categories $\compPair$.  If $H/K$ is an $\mathcal{O}_m$-admissible $H$-set and $f\colon S\to T$ is a map in $i^{*}_K\mathcal{O}_a$ then $\map_K(H,f)$ is a map in $i^*_{H}\mathcal{O}_a$.  In particular, taking $T=K/K$, we see that $\map_{K}(H,S)$ is an $\mathcal{O}_a$-admissible $H$-set for all $\mathcal{O}_a$-admissible $K$-sets $S$, so $\compPair$ is compatible.
\end{proposition}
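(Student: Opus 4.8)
The plan is to prove the stated refinement about maps by first settling the case $K\trianglelefteq H$, where the single combinatorial axiom of a compatible pair of transfer systems does all the work, and then deducing the general case from it via the core. By Example~\ref{ex:restrictedIndexingCat} the $i_H^*\mathcal{O}_a$-admissible sets are exactly the $\mathcal{O}_a$-admissible ones, so by Lemma~\ref{lem:stabilizerQuotients} it suffices to show that for every $\mu\in\map_K(H,S)$, writing $\nu=\map_K(H,f)(\mu)=f\circ\mu$, the $H$-set $H_\nu/H_\mu$ is $\mathcal{O}_a$-admissible (note $H_\mu\le H_\nu$ automatically). The ``in particular'' clause is then the case $T=K/K$: there $\map_K(H,T)=H/H$, the structure map lies in $i_K^*\mathcal{O}_a$ exactly when $S$ is $\mathcal{O}_a$-admissible, and membership of $\map_K(H,f)$ in $i_H^*\mathcal{O}_a$ forces every orbit of $\map_K(H,S)$ to be $\mathcal{O}_a$-admissible; together with Theorem~\ref{thm:compatibilityCharacterization} this gives that $\compPair$ is compatible.

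\textbf{The case $K\trianglelefteq H$.} I would write $Q=H/K$, fix a set section $s\colon Q\to H$ with $s(1)=e$, and identify $\mu$ with the tuple $(\mu_q)_{q\in Q}$, $\mu_q=\mu(s(q))\in S$, on which $H$ acts through a twisted permutation action: it permutes the index $q$ through $Q$ and twists the value by the relevant element of $K$. Computing stabilizers in this model gives
\[
	K\cap H_\mu=\bigcap_{q\in Q}s(q)^{-1}K_{\mu_q}\,s(q),\qquad K\cap H_\nu=\bigcap_{q\in Q}s(q)^{-1}K_{f(\mu_q)}\,s(q).
\]
Since $f\in i_K^*\mathcal{O}_a$, Lemma~\ref{lem:stabilizerQuotients} gives $K_{\mu_q}\le_{\mathcal{T}_a}K_{f(\mu_q)}$ for every $q$; closure of $\mathcal{T}_a$ under conjugation followed by a telescoping use of closure under intersection then yields $K\cap H_\mu\le_{\mathcal{T}_a}K\cap H_\nu$, while $K\le_{\mathcal{T}_m}H$ (as $H/K$ is $\mathcal{O}_m$-admissible) together with closure of $\mathcal{T}_m$ under intersection gives $K\cap H_\nu\le_{\mathcal{T}_m}H_\nu$. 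I would then invoke condition~(2) of Definition~\ref{defn:compatibleTransferSystems} with $A=H_\nu$, $B=K\cap H_\nu$, $C=H_\mu$: indeed $B\le_{\mathcal{T}_m}A$, and $B\cap C=K\cap H_\mu\le_{\mathcal{T}_a}B$, so $C=H_\mu\le_{\mathcal{T}_a}A=H_\nu$, which is precisely the desired $\mathcal{O}_a$-admissibility of $H_\nu/H_\mu$. Diagrammatically this is an instance of completing \eqref{incompleteSquare} to \eqref{completeSquare}.

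\textbf{Reduction of the general case to the normal one.} Let $N=\core_H(K)$, so $N\trianglelefteq H$ and $H/N$ is $\mathcal{O}_m$-admissible by Proposition~\ref{coreLemma}. A $K$-equivariant map $H\to S$ is a fortiori $N$-equivariant, so restriction of equivariance gives inclusions of $H$-sets $\map_K(H,S)\hookrightarrow\map_N(H,S)$ and $\map_K(H,T)\hookrightarrow\map_N(H,T)$ (with $S,T$ regarded as $N$-sets), and under these $\map_K(H,f)$ is the restriction of $\map_N(H,f)$. Moreover $f$, viewed as a map of $N$-sets, lies in $i_N^*\mathcal{O}_a$: restricting a map preserves the indexing subcategory, by Lemmas~\ref{lem:stabilizerQuotients} and~\ref{intersectionLemma}. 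The normal case just proved gives $\map_N(H,f)\in i_H^*\mathcal{O}_a$, and restricting a map in $i_H^*\mathcal{O}_a$ to sub-$H$-sets again lands in $i_H^*\mathcal{O}_a$ (the stabilizer criterion of Lemma~\ref{lem:stabilizerQuotients} on a subobject is implied by the one on the ambient object), so $\map_K(H,f)\in i_H^*\mathcal{O}_a$, as needed.

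\textbf{Expected main difficulty.} I expect the crux to be the identification of $K\cap H_\mu$ and $K\cap H_\nu$ in the normal case as those finite intersections of $G$-conjugates of the fibre-stabilizers $K_{\mu_q}$ -- carrying out the twisted-permutation computation correctly and noting that the conjugating elements $s(q)$ lie in $H\le G$ (though not necessarily in $K$), so that closure of $\mathcal{T}_a$ under conjugation genuinely applies. Once that is in place, the transfer-system axioms and the single compatibility condition finish the normal case immediately, and the core reduction together with the restriction-stability of the indexing subcategory is purely formal.
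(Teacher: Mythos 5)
Your proof is correct and follows essentially the same route as the paper's: both hinge on applying the compatibility axiom of Definition~\ref{defn:compatibleTransferSystems} to the square with apex $H_{f\circ\mu}$ and with $B$ the intersection of that stabilizer with the normal core of $K$ in $H$, and both establish the edge $B\cap C\leq_{\mathcal{T}_a}B$ by writing these subgroups as finite intersections of $H$-conjugates of the fibre stabilizers $K_{\mu(h)}$ and invoking closure under conjugation and intersection. The only difference is organizational: you isolate the normal case and then reduce to it via the inclusion $\map_K(H,S)\hookrightarrow\map_{\core_H(K)}(H,S)$, whereas the paper inlines the core $C=\core_H(K)$ directly into the general argument.
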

\begin{proof}
	
	To show $\map_K(H,f)$ is a map in $i^*_{H}\mathcal{O}_a$, we must show for all $\alpha\in \map_{K}(H,S)$ that $H_{f\circ\alpha}/H_{\alpha}$ is an $\mathcal{O}_a$-admissible set.  Shifting to transfer systems, we must show $H_{\alpha}\leq_{\mathcal{T}_a} H_{f\circ \alpha}$.
	
	Writing $C = \core_H(K)$, we claim that we have the following subgraph in the directed graph for the pair $\tCompPair$.
	\[
	\begin{tikzcd}[column sep = small]
		 & H_{f\circ \alpha} & \\
		 H_{\alpha} & & C\cap  H_{f\circ \alpha}\ar{ul} \\
		 & C \cap H_{\alpha} \ar{ul} \ar[dashed]{ur}&  
	\end{tikzcd}
	\] 
	
	The solid arrows exist by Lemma \ref{coreLemma} and closure under intersection. If we can establish the existence of the dashed arrow, we can use compatibility of our transfer systems to complete the diagram by filling in the last side of the diamond with a dashed arrow, completing the proof.  
	
	We claim that
	\[
		C\cap H_{\alpha} = C\cap \left(\bigcap\limits_{h\in H} h^{-1}K_{\alpha(h)}h\right) 
	\]
	where $K_{\alpha(h)}$ is the stabilizer of $\alpha(h)$ in $K$.  To prove the claim, we first note that for all $x\in C$ we have
	\[
		(x\cdot \alpha)(h) = \alpha(hx) = \alpha(hxh^{-1}h) = (hxh^{-1})\alpha(h)
	\]
	where the last equality follows from the fact that $C$ is a normal subgroup of $H$.  Thus $x\in C\cap H_{\alpha}$ if and only if for all $h\in H$ we have $\alpha(h) = (hxh^{-1})\alpha(h)$ if and only if $x\in h^{-1}K_{\alpha(h)}h$ for all $h$, establishing the claim.
	
 	The same argument shows 
	\[
	 C\cap H_{f\circ \alpha}= C\cap \left(\bigcap\limits_{h\in H} h^{-1}K_{(f\circ\alpha)(h)}h\right).
	\]
	
	Since $f\colon S\to T$ is in $i_{K}^{*}\mathcal{O}_a$ we have, by Lemma \ref{lem:stabilizerQuotients}, that $K_{(f\circ\alpha)(h)}/K_{\alpha(h)}$ is $\mathcal{O}_a$-admissible for any $h$.   The $\mathcal{O}_a$-admissibility of $(C\cap H_{f\circ\alpha})/(C\cap H_{\alpha})$ now follows from Lemma \ref{intersectionLemma}.
\end{proof}

Together, Propositions \ref{prop:catImpliesSystem} and \ref{prop:systemImpliesCat} justify the claim that compatibility for indexing categories and transfer systems are the same.

\begin{thm}\label{thm:catEqualSystem}
	Suppose $\compPair$ is a pair of indexing categories.  If $\tCompPair$ is the corresponding pair of transfer systems then $\compPair$ is compatible, in the sense of Definition \ref{defn:compatibleIndexingCategories}, if and only if $\tCompPair$ is compatible, in the sense of Definition \ref{defn:compatibleTransferSystems}.
\end{thm}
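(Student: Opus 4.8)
The plan is to read the equivalence directly off the two propositions just established. Proposition \ref{prop:catImpliesSystem} shows that if $\compPair$ is compatible as a pair of indexing categories then $\tCompPair$ satisfies Definition \ref{defn:compatibleTransferSystems}, and Proposition \ref{prop:systemImpliesCat} shows the converse. Since the assignments $\mathcal{O}\mapsto\mathcal{T}$ and $\mathcal{T}\mapsto\mathcal{O}$ are mutually inverse bijections (Theorem 3.7 of \cite{rubin_detecting}), these are precisely the two directions of the asserted biconditional, so the theorem follows with no further work; I would state the proof in a single line to that effect.

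It is worth recording which ingredient in each direction is doing the real work, in case the reader wants to reconstruct the argument. In the direction \emph{indexing categories compatible $\Rightarrow$ transfer systems compatible}, the key obstacle is converting the coinduction characterization of compatibility (Theorem \ref{thm:compatibilityCharacterization}) into the combinatorial diamond-filling condition. The approach is: given the incomplete square $B\leq_{\mathcal{T}_m}A$ and $(B\cap C)\leq_{\mathcal{T}_a}B$, apply Theorem \ref{thm:compatibilityCharacterization} to conclude that an appropriate coinduced set $\map_B(A,X)$ is $\mathcal{O}_a$-admissible, then exhibit an element of it whose stabilizer lies strictly between $C$ and $A$; an induction on the index $[A:C]$ then splits the desired edge $C\dashrightarrow A$ into two shorter edges and closes the diamond. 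The delicate points are choosing $X$ (the trivial $B$-set on the double cosets $B\backslash A/C$ works) and the stabilizer computation, with the base case $A=BC$ handled separately via the map $A\to B/(B\cap C)$.

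In the other direction, \emph{transfer systems compatible $\Rightarrow$ indexing categories compatible}, the main device is the normal core: replacing $K$ by $\core_H(K)$ is legitimate by Proposition \ref{coreLemma}, and then the stabilizer of an element $\alpha\in\map_K(H,S)$ intersected with the core becomes an intersection of conjugates of stabilizers in $K$, so the needed admissibility is controlled by closure under intersection (Lemma \ref{intersectionLemma}) together with the diamond-filling hypothesis on $\tCompPair$, which supplies the last edge; one then appeals to the easy direction of Theorem \ref{thm:compatibilityCharacterization}. This is the step I would expect to be conceptually hardest to discover, since without the core trick the stabilizers of elements of a coinduced set are not visibly intersections of admissible quotients; the failure of compatibility in Figure \ref{fig:twoTransferSystemsInOne} is a good sanity check that the condition is not vacuous.
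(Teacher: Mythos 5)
Your proposal is correct and matches the paper exactly: Theorem \ref{thm:catEqualSystem} is stated in the paper as an immediate consequence of Propositions \ref{prop:catImpliesSystem} and \ref{prop:systemImpliesCat}, with no further argument needed. Your sketches of the two underlying propositions (the double-coset/induction-on-index argument for one direction and the normal-core trick for the other) also faithfully reflect how the paper proves them.
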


Finally, we end this section with the following corollary, affirmatively answering a conjecture of Blumberg and Hill. This corollary overcomes a technical obstacle in defining norms of incomplete Mackey functors.  After unwinding definitions, the proof is immediate from Proposition \ref{prop:systemImpliesCat}.
	\begin{cor}[Conjecture 7.4 of \cite{blumberghillbiincomplete}] \label{cor:firstConjecture}
		If $\compPair$ is a compatible pair of indexing categories then for every $\mathcal{O}_m$-admissible $H$-orbit $H/K$, coinduction restricts to a functor 
		\[
			\map_K(H,-)\colon i^{*}_K\mathcal{O}_a \to i^{*}_H\mathcal{O}_a,
		\]
		where $i_K^*\mathcal{O}$ and $i_H^*\mathcal{O}$ are the indexing categories of Example \ref{ex:restrictedIndexingCat}.
	\end{cor}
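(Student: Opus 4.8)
The plan is to recognize that this corollary is essentially a repackaging of Proposition \ref{prop:systemImpliesCat}, so the work is almost entirely definitional. First I would recall that coinduction $\map_K(H,-)\colon \Set^K \to \Set^H$ is already a functor, so to prove that it restricts to a functor $i_K^*\mathcal{O}_a \to i_H^*\mathcal{O}_a$ it suffices to check two things: that it carries objects of $i_K^*\mathcal{O}_a$ to objects of $i_H^*\mathcal{O}_a$, and that it carries morphisms of $i_K^*\mathcal{O}_a$ to morphisms of $i_H^*\mathcal{O}_a$. The first is automatic: since $i_K^*\mathcal{O}_a$ and $i_H^*\mathcal{O}_a$ are wide subcategories of $\Set^K$ and $\Set^H$ respectively, they contain all finite $K$-sets, resp.\ $H$-sets, as objects. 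So the entire content is the statement about morphisms.

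Next I would unwind the definition of the restricted indexing categories from Example \ref{ex:restrictedIndexingCat}: a map $f\colon S\to T$ of $K$-sets lies in $i_K^*\mathcal{O}_a$ precisely when $G\times_K f$ lies in $\mathcal{O}_a$, and similarly for $i_H^*\mathcal{O}_a$. With this translation in hand, the claim that $\map_K(H,f)$ lies in $i_H^*\mathcal{O}_a$ whenever $f$ lies in $i_K^*\mathcal{O}_a$ is literally the first assertion of Proposition \ref{prop:systemImpliesCat}. That proposition applies here because $H/K$ is $\mathcal{O}_m$-admissible by hypothesis, and because the pair of transfer systems $\tCompPair$ corresponding to $\compPair$ is compatible in the sense of Definition \ref{defn:compatibleTransferSystems} --- this last point is exactly the content of Proposition \ref{prop:catImpliesSystem} (equivalently, one direction of Theorem \ref{thm:catEqualSystem}), applied to our compatible pair $\compPair$.

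Having established that coinduction preserves both objects and morphisms of the relevant subcategories, and since it is already functorial on the ambient categories $\Set^K \to \Set^H$ (preserving identities and composites on the nose), the restricted functor $\map_K(H,-)\colon i_K^*\mathcal{O}_a \to i_H^*\mathcal{O}_a$ exists as claimed. The only point that requires a moment's care --- and where an error would most plausibly creep in --- is making sure we are invoking Proposition \ref{prop:systemImpliesCat} with \emph{compatibility of transfer systems} as an available hypothesis rather than as a conclusion; this is handled by first passing from ``$\compPair$ compatible'' to ``$\tCompPair$ compatible'' via Proposition \ref{prop:catImpliesSystem}, and only then applying Proposition \ref{prop:systemImpliesCat}. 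Beyond this bookkeeping there is genuinely nothing to compute: all the combinatorial work with cores and double cosets has already been carried out inside the proof of Proposition \ref{prop:systemImpliesCat}.
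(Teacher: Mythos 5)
Your proof is correct and matches the paper's argument: the paper likewise treats the corollary as an immediate unwinding of Proposition \ref{prop:systemImpliesCat}, with the passage from compatibility of $\compPair$ to compatibility of $\tCompPair$ supplied by Proposition \ref{prop:catImpliesSystem} (Theorem \ref{thm:catEqualSystem}). Your explicit bookkeeping of that last step is exactly the intended reading.
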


\section{Symmetric Monoidal Mackey Functors}\label{section:GCommMonoids}

In this section we recall the notion of $G$-symmetric monoidal Mackey functors in the sense of Hill and Hopkins \cite{HiHo-ESMS} and lay the groundwork for showing that bi-incomplete Tambara functors are precisely the $\mathcal{O}_m$-commutative monoids in $\mathcal{O}_a$-Mackey functors.  This result is analogous to the fact that commutative rings are the commutative monoids in the category of abelian groups. While this is a helpful analogy to have in mind, we stress that Tambara functors are \emph{not} the commutative monoids in the monoidal category of Mackey functors.  While Mackey functors do have a symmetric monoidal product, called the box product, the commutative monoids are the commutative Green functors. 

A crucial insight of Hill and Hopkins is that recovering Tambara functors from the category of Mackey functors requires considering a form of equivariant symmetric monoidal structure on the category of Mackey functors. Just as a symmetric monoidal category is a form of monoid object in categories, an equivariant symmetric monoidal structure should be a form of equivariant monoid object, \ie a semi-Mackey functor, in categories.  To keep our presentation brief, we limit our presentation to only what we need, referring the interested reader to \cite{HiHo-ESMS} for further discussion.

\begin{defn}\label{defn:symMack}
	A \emph{symmetric monoidal $\mathcal{O}$-Mackey functor} $\mathcal{C}$ consists of symmetric monoidal categories $\mathcal{C}(H)$ for every subgroup $H\leq G$, together with strong monoidal functors 
	\[
	 	R^H_K\colon\mathcal{C}(H)\to \mathcal{C}(K), \quad c_g\colon \mathcal{C}(H)\to \mathcal{C}(H^g)
	\]
	
	for any pair $K\leq H$, and for any $g\in G$.  If $H/K$ is an $\mathcal{O}$-admissible $H$-set, we also have a strong monoidal functor $N_K^H\colon C(K)\to C(H)$.  The functors $N^H_K$, $R^H_K$ and $c_g$ are called the norm, restriction, and conjugation respectively.  We require further the following coherence data:
	
	\begin{enumerate}
	\item isomorphisms $N_K^HN_L^K\cong N_L^H$, $R^K_LR^H_K\cong R^H_L$, and $c_gc_h\cong c_{gh}$ whenever these makes sense,
	
	\item For any $g\in G$, isomorphisms $c_{g}N_K^H\cong N_{K^g}^{H^g}c_g$ and $c_gR^H_K\cong R^{H^g}_{K^g}$ \label{eq:conjugationIsomorphisms}
	
	\item For subgroups $K,L\leq H$, with $H/L$ $\mathcal{O}$-admissible, the norms and restrictions are required to have a natural isomorphism
	\begin{equation}\label{eq:doubleCosetFormula}
		R^H_KN^H_L \cong \bigotimes\limits_{\gamma_i} N_{K\cap L^{\gamma_i}}^K R^{L^{\gamma_i}}_{K\cap L^{\gamma_i}} c_{\gamma_i}
	\end{equation}
	where the product on the right is the symmetric monoidal product in $\mathcal{C}(K)$ indexed over a collection of double coset representatives for $K\backslash H/L$.  We refer to this isomorphism as the double coset formula.
	\end{enumerate}
\end{defn}
\begin{remark}
	In the statement of the double coset formula it may appear we have failed to assume the $\mathcal{O}$-admissibility of the sets $K/K\cap L^{\gamma_i}$ to ensure the right side is well defined.  In fact, the admissibility of these sets follows from the admissibility of $H/L$, and closure of admissible sets under conjugation and intersection. We again stress that, just as in Remark \ref{rem:mackeyDoubleCoset}, it may occur that the right side of \eqref{eq:doubleCosetFormula} may be defined without the norm $N^H_L$ being well defined.
\end{remark}
\begin{example}
	For any $G$-indexing category $\mathcal{O}$, there is a symmetric monoidal $\mathcal{O}$-Mackey functor $\Set^\mathcal{O}$ given by $\Set^{\mathcal{O}}(H) = \Set^H$, where $\Set^H$ is the category of $H$-sets with the cocartesian monoidal structure.  The norm maps $N_K^H$ are given by the induction functors $H\times_K(-)\colon \Set^K\to \Set^H$.  The restrictions are the usual forgetful functors, and the conjugations are given by the usual isomorphism of categories $c_g\colon\Set^H\cong \Set^{H^g}$.
	
	If we instead give the categories $\Set^H$ the cartesian monoidal structure, we can construct a different symmetric monoidal $\mathcal{O}$-Mackey functor by taking the same restrictions and conjugations, and taking the norms to be coinduction functors.  The change in norms is necessitated by the fact that the double coset formula depends on the symmetric monoidal structures on the categories $\Set^H$.
\end{example}

\begin{remark}
	The double coset formula is an exact analogue the formula for ordinary Mackey functors from Lemma \ref{lem:propertiesOfMackeyFunctors} (3).  For Mackey functors, the double coset formula comes from the composition laws for the Burnside category.  In theory, a symmetric monoidal Mackey functor could be defined as a $2$-product preserving pseudofunctor out of an appropriate bicategorical Burnside category.  A similar construction has been carried out by Balmer and Dell'Ambrogio in what they call Mackey 2-functors\cite{BalmerDell}.   We have avoided doing so here because the $2$-category theory would take us beyond the scope of this paper.
\end{remark}

With symmetric monoidal Mackey functors as our model for equivariant symmetric monoidal categories, we turn to task of defining equivariant monoids in these categories.  To motivate the definitions, we consider the example of equivariant stable homotopy in which our equivariant monoids need to correspond to ring spectra.

	For $\mathcal{O}=\Set^G$, there is a symmetric monoidal Mackey functor given by $G/H\mapsto \spec^H$, the category of genuine $H$-spectra.  The restriction functors are the usual restrictions $\mathrm{Res}^H_K\colon \spec^H\to \spec^K$ and the norms $N^H_K\colon \spec^K\to \spec^H$ are given by the Hill--Hopkins--Ravenel norm \cite{HHR}.  For any $H\leq G$, denote the category of commutative ring spectra in $\spec^H$ by $\mathrm{Comm}^H$. 

\begin{proposition}[Proposition 2.27 of \cite{HHR}]	\label{prop:HHRNormAdjunction}
The norm and restriction functors restrict to an adjunction
	\[
		\begin{tikzcd}
			\mathrm{Comm}^K \ar[shift left,"N^H_K"]{r} & \mathrm{Comm}^H \ar[shift left, "R^H_K"]{l}
		\end{tikzcd}
	\]
with $N^H_K$ as the left adjoint.
\end{proposition}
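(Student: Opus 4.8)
The plan is to first promote the norm and restriction to functors on commutative rings, and then verify the adjunction by reducing to free commutative ring spectra. Since the Hill--Hopkins--Ravenel norm $N^H_K\colon\spec^K\to\spec^H$ is symmetric monoidal, it carries commutative monoids to commutative monoids and so restricts to a functor $N^H_K\colon\mathrm{Comm}^K\to\mathrm{Comm}^H$; dually, $\mathrm{Res}^H_K$ is strong symmetric monoidal, hence restricts to $R^H_K\colon\mathrm{Comm}^H\to\mathrm{Comm}^K$. I would then exhibit the unit and counit explicitly. The unit $\eta_A\colon A\to R^H_K N^H_K A$ is the inclusion of the ``diagonal'' smash factor in the double coset decomposition of the underlying object of $R^H_K N^H_K A$; one checks this is a map of commutative $K$-rings. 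The counit $\varepsilon_B\colon N^H_K R^H_K B\to B$ is the ``norm multiplication'' map that uses the commutative $H$-ring structure of $B$ to multiply together the $H/K$-indexed smash factors comprising $N^H_K R^H_K B$; one checks this is a map of commutative $H$-rings.

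Rather than verifying the triangle identities by hand, I would establish the adjunction bijection first on free commutative rings and then extend. For a free commutative $K$-ring $\mathbb{P}_K X$ there is a natural isomorphism $N^H_K\mathbb{P}_K X\cong\mathbb{P}_H(H_+\wedge_K X)$, identifying the norm of a free algebra with the free algebra on an induced object; this is a combinatorial fact about the behavior of norms on free commutative algebras proved in \cite{HHR}. Combining it with the free--forgetful adjunction and the induction--restriction adjunction $H_+\wedge_K(-)\dashv\mathrm{Res}^H_K$ on spectra gives, for any $B\in\mathrm{Comm}^H$, natural bijections
\begin{align*}
	\mathrm{Comm}^H\bigl(N^H_K\mathbb{P}_K X,\,B\bigr)
	&\cong\mathrm{Comm}^H\bigl(\mathbb{P}_H(H_+\wedge_K X),\,B\bigr)
	\cong\spec^K\bigl(X,\,\mathrm{Res}^H_K B\bigr)\\
	&\cong\mathrm{Comm}^K\bigl(\mathbb{P}_K X,\,R^H_K B\bigr),
\end{align*}
and one checks this composite is precomposition with $\eta_{\mathbb{P}_K X}$, so the adjunction holds on free commutative rings. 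To extend to an arbitrary $A\in\mathrm{Comm}^K$, write $A$ as the reflexive coequalizer of free commutative rings $\mathbb{P}_K\mathbb{P}_K X\rightrightarrows\mathbb{P}_K X$ with $X$ its underlying spectrum; since $\mathrm{Hom}$ carries colimits in the first variable to limits, it suffices to know that $N^H_K\colon\mathrm{Comm}^K\to\mathrm{Comm}^H$ preserves this reflexive coequalizer, for then the bijection on free algebras propagates to $A$, and naturality in $A$ and $B$ is routine.

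The main obstacle is exactly this last point. The norm is not additive and does not commute with colimits of underlying spectra, so one cannot commute $N^H_K$ past the bar-type presentation of $A$ naively. Following \cite{HHR}, the fix is to stay inside $\mathrm{Comm}$, where reflexive coequalizers (more generally sifted colimits) are created by the forgetful functor and are built out of the iterated smash products that the norm does respect; from this one shows $N^H_K$ preserves reflexive coequalizers of commutative rings, which closes the argument. As a sanity check on orientation, $N^H_K$ comes out as the \emph{left} adjoint, consistent with the fact that the norm is a multiplicative analogue of induction, which is left adjoint to restriction on underlying spectra.
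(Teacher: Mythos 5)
This proposition is imported from Hill--Hopkins--Ravenel without proof (the paper only cites \cite{HHR}, Proposition 2.27), so there is no in-paper argument to compare against; your sketch is a faithful reconstruction of the standard HHR argument: symmetric monoidality of the norm, the identification $N^H_K\mathbb{P}_K X\cong\mathbb{P}_H(H_+\wedge_K X)$, and propagation from free algebras along reflexive coequalizers using that $\mathrm{Comm}$-colimits of this shape are created in spectra and preserved by the norm. The only step I would ask you to make explicit is that the isomorphism $N^H_K\mathbb{P}_K\cong\mathbb{P}_H(H_+\wedge_K(-))$ is compatible with the monad multiplications (not merely natural in $X$), since the coequalizer presentation $\mathbb{P}_K\mathbb{P}_K X\rightrightarrows\mathbb{P}_K X$ involves $\mu$, and without that compatibility the bijection does not descend to $A$; this is exactly what the appendix results of \cite{HHR} supply.
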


Proposition \ref{prop:HHRNormAdjunction} says that a $G$-ring spectrum $E$, in addition to being a commutative monoid in $\spec^G$, comes equipped with counit maps $\mu^G_H\colon N^G_HR^G_HE\to E$, called norm multiplications by \cite{AngeltveitBohmann}, for any subgroup $H\leq G$.  Similarly, every $H$-ring spectrum $F$ comes equipped with unit maps $\eta^G_H\colon F\to R^G_HN^G_HF$.  The Hill--Hopkins model for equivariant commutative monoids is essentially just ordinary commutative monoids, together with coherent collections of unit and counit maps $\mu^G_H$ and $\eta^G_H$.  We begin by discussing the unit maps, which it turns out exist for any ordinary commutative monoid. 

For any subgroup $H\leq G$, the double coset representatives $\gamma_1,\dots, \gamma_n$ for $H\backslash G/H$ can be picked so that $\gamma_1=e$ is the identity of $G$.  This leads to a decomposition in the double coset formula:
\begin{equation}\label{eq:doubleCosetDecomposition}
	R^G_HN^G_H\cong \id_{\mathcal{C}(H)}\otimes \left( \bigotimes\limits_{i>1} N_{K\cap H^{\gamma_i}}^K R^{H^{\gamma_i}}_{K\cap H^{\gamma_i}} c_{\gamma_i}\right)	
\end{equation}

Suppose $x$ is a monoid in the symmetric monoidal category $\mathcal{C}(H)$ and write $\eta_x\colon 1_H\to x$ for the unit.  Since the restrictions, norms, and conjugation functors are all strong monoidal, they all preserve monoids and so there are unit maps $\eta_i\colon 1_H\to N_{K\cap H^{\gamma_i}}^K R^{H^{\gamma_i}}_{K\cap H^{\gamma_i}} c_{\gamma_i}(x)$ for every $i$.  In light of \eqref{eq:doubleCosetDecomposition},  we can define a map $\eta^G_H\colon x\to R^G_HN^G_H(x)$ by
\begin{equation}\label{eq:unit}
	x\cong x\otimes\left(\bigotimes\limits_{i>1} 1_H \right) \xrightarrow{1\otimes \left(\bigotimes\eta_i\right)} x\otimes \left( \bigotimes\limits_{\gamma_i, i>1} N_{K\cap H^{\gamma_i}}^K R^{H^{\gamma_i}}_{K\cap H^{\gamma_i}} c_{\gamma_i}(x)\right) \cong R^G_HN^G_H(x) 
\end{equation}

The map $\eta^G_H$ function as the unit maps for our equivariant commutative monoids.  While these maps exist for any commutative monoid, the counits represent non-trivial data which differentiates equivariant commutative monoids from the ordinary variety. 

\begin{defn}[See \cite{Hoyer} Lemma 2.7.3]\label{defn:coherence}
	For $\mathcal{C}$ a symmetric monoidal $\mathcal{O}$-Mackey functor, an \emph{$\mathcal{O}$-commutative monoid} is a monoid (in the usual sense) $x\in \mathcal{C}(G)$ together with \emph{norm multiplications} $\mu^H_K\colon N^H_KR^G_K(c)\to R^G_H(x)$ for every pair of subgroups $K\leq H$.  When the groups are clear from context, we may simply write $\mu$ for instead of $\mu^H_K$.  The norm multiplications are subject to the following coherence data:
	\begin{enumerate}
		\item For any $H\leq G$ we have the triangle identity
		\[
		\begin{tikzcd}
			R^G_H(x)\ar["{\eta^G_H}"]{r} \ar[equal]{dr}& R^G_HN_H^GR^G_H(x)\ar["{R^G_H(\mu^G_H)}"]{d}\\
			& R^G_H(x)
		\end{tikzcd}
		\]
		where $\eta^G_H$ is the map \eqref{eq:unit}.
		\item For all $L\leq K\leq H$, the map $\mu_L^H$ is equal to
		\[
		N^H_LR^G_L(x)\cong N^H_KN^K_LR^G_L(x)\xrightarrow{N^H_K(\mu^K_L)} N^H_KR^G_K(x)\xrightarrow{\mu^H_K} R^G_H(x) 
		\]
		\item For any $K\leq H$ and $g\in G$ we have a commuting square
		\[
		\begin{tikzcd}
			c_{g} N_K^HR^G_K(x) \ar["c_g\mu_K^H"]{r} \ar["\cong"]{d} & c_gR^G_H(x) \ar["\cong"]{d} \\
			N_{K^g}^{H^g}R^G_{K^g}(x) \ar["\mu^{H^g}_{K^g}"]{r} & R^G_{H^g}(x)
		\end{tikzcd}
		\]
		where the vertical isomorphisms come from the structural isomorphisms \eqref{eq:conjugationIsomorphisms} of Definition \ref{defn:symMack} 
		
		\item (compatibility with the double coset formula).  Let $L$, $K$, and $H$ be subgroups of $G$ with $L\leq K$. Fix representatives $\alpha_1,\dots,\alpha_n$ and $\beta_1,\dots \beta_m$  for the double cosets $L\backslash G/H$ and $K\backslash G/H$ respectively. The following diagram must commute:
		\[
			\begin{tikzcd}[row sep = large, column sep = huge]
				  \bigotimes\limits_{\alpha_i} N^K_{K\cap H^{\alpha_i}}N_{L\cap H^{\alpha_i}}^{K\cap H^{\gamma_i}} R^G_{L\cap H^{\alpha_i}}M  \ar["\otimes N^K_L(\mu^{K\cap H^{\alpha_i}}_{L\cap H^{\alpha_i}})"]{rr} &  & \bigotimes\limits_{\alpha_i} N^K_{K\cap H^{\alpha_i}}R^G_{K\cap H^{\alpha_i}}M \ar["F"]{d} \\
				N^K_LR^G_LN^G_HR^G_HM \ar["N^K_LR^G_L(\mu_H^G)"]{d} \ar["\cong"]{u}& & \bigotimes\limits_{\beta_i} N^K_{K\cap H^{\beta_i}}R^G_{K\cap H^{\beta_i}}M  \ar["\otimes \mu^K_{K\cap H^{\beta_i}}"]{d}\\
				N^K_LR^G_L(M) \ar["\mu_L^K"]{rr}  & & R^G_KM
			\end{tikzcd}
		\]
		The isomorphism comes from the double coset formula applied to $R^G_LN^G_H$.  To define the map $F$, note that if $\alpha_i\in K\beta_jH$, then $K\cap H^{\alpha_i}\cong K\cap H^{\beta_j}$ and so we have well defined multiplications \[\bigotimes\limits_{\alpha_i\in K\beta_jH} N^K_{K\cap H^{\alpha_i}}R^G_{K\cap H^{\alpha_i}}M\to N^K_{K\cap H^{\beta_j}}R^G_{K\cap H^{\beta_j}}M\] and $F$ is the monoidal product of all these maps.
	\end{enumerate}
\end{defn}  

\begin{defn}
	A morphism of $\mathcal{O}$-commutative monoids $x$ and $y$ is a morphism $f\colon x\to y$ of monoids in $\mathcal{C}(G)$ such that for any subgroups $K\leq H$ with $H/K$ $\mathcal{O}$-admissible, the following square commutes
	\[
	\begin{tikzcd}[row sep =large, column sep = large]
	N^H_KR^G_K(x) \ar["N^H_KR^G_K(f)"]{r} \ar["\mu^H_K"]{d} & N^H_KR^G_K(y) \ar["\mu^H_K"]{d}\\
	R^G_H(x) \ar["R^G_H(f)"]{r} & R^G_H(y)
	\end{tikzcd}
	\]
\end{defn}

	To clarify coherence condition (4) in Definition \ref{defn:coherence}, note the double coset formula gives an isomorphism:
	\[
		R^G_KN^G_HR^G_H M\cong \bigotimes\limits_{\beta_i} N^K_{K\cap H^{\beta_i}}R^G_{K\cap H^{\beta_i}}M.
	\]
	Passing through this isomorphism, and applying (4) in the degenerate case $K=L$, we can interpret the last map of (4) as 
	\[
		R^G_KN^G_HR^G_H M\xrightarrow{R^G_K(\mu^G_H)} R^G_KM
	\]
	
	In total, we have morphisms $\alpha^K_L\colon N^K_LR^G_L(N^G_HR^G_HM)\to R^G_K(N^G_HR^G_HM)$ such that the following diagram commutes:
	\[
		\begin{tikzcd}[row sep = large, column sep = large]
			N^K_LR^G_L(N^G_HR^G_HM) \ar["N^K_LR^G_L(\mu^G_H)"]{d} \ar["\alpha^K_L"]{r} & R^G_KN^G_HR^G_HM \ar["R^G_K(\mu^G_H)"]{d}\\
			N^K_LR^G)HM \ar["\mu^K_L"]{r} & R^G_KM
		\end{tikzcd}
	\]
	The above discussion indicates the proof of:
	\begin{proposition}\label{prop:inducedMonoid}
		For any $\mathcal{O}$-commutative monoid $M$ and any $H\leq G$ with $G/H$ an $\mathcal{O}$-admissible set, the object $N^G_HR^G_HM$ is also an $\mathcal{O}$-commutative monoid.  Moreover, the map $\mu^G_H\colon N^G_HR^G_HM\to M$ is a morphism of $\mathcal{O}$-commutative monoids.
	\end{proposition}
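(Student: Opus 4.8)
The plan is to equip $N := N^G_HR^G_HM$ with norm multiplications and then verify the coherence axioms of Definition \ref{defn:coherence}, taking for the norm multiplications exactly the maps $\alpha^K_L$ built in the discussion preceding the statement. First I would note that $N$ is a monoid in $\mathcal{C}(G)$: the functors $R^G_H$ and $N^G_H$ are strong monoidal by Definition \ref{defn:symMack}, hence preserve monoids, so $N^G_HR^G_HM$ carries a canonical monoid structure inherited from $M$. Next, for $L\leq K\leq G$ with $K/L$ an $\mathcal{O}$-admissible $K$-set, I would unwind the construction of $\alpha^K_L$: one applies the double coset isomorphism \eqref{eq:doubleCosetFormula} to $R^G_LN^G_HR^G_HM$ (simplifying $R^{H^{\alpha_i}}_{L\cap H^{\alpha_i}}c_{\alpha_i}R^G_H\cong R^G_{L\cap H^{\alpha_i}}$), pushes forward along $N^K_L$, applies the norm multiplications $\mu^{K\cap H^{\alpha_i}}_{L\cap H^{\alpha_i}}$ of $M$ together with the comparison map $F$ of the discussion, and then runs the double coset isomorphism for $R^G_KN^G_H$ backwards. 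A preliminary point I would need to settle is that $\alpha^K_L$ is independent of the chosen double coset representatives; this follows from the coherence of \eqref{eq:doubleCosetFormula} in $\mathcal{C}$ together with axiom (3) of Definition \ref{defn:coherence} for $M$, which says the norm multiplications of $M$ are compatible with conjugation.

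It then remains to check the four coherence conditions of Definition \ref{defn:coherence} for the family $\{\alpha^K_L\}$. I expect conditions (1)--(3) --- the triangle identity, the composition law $\mu^H_L=\mu^H_K\circ N^H_K(\mu^K_L)$, and conjugation-equivariance --- to follow rather formally: after expanding each $\alpha$ through the double coset isomorphisms, each identity reduces to the corresponding identity for the norm multiplications of $M$, together with naturality of the double coset isomorphisms, the defining formula \eqref{eq:unit} for the unit maps, and the monoidal coherences packaged into ``strong monoidal'' in Definition \ref{defn:symMack}. The genuine obstacle is condition (4), compatibility with the double coset formula, applied to $N$ with a third auxiliary subgroup $P$: one must reconcile the double coset decompositions of $R^G_LN^G_P$ and $R^G_KN^G_P$ with the internal decompositions of $R^G_{(-)}N^G_H$ that define the $\alpha$'s. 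My plan here is to expand every $\alpha$ into norm multiplications of $M$ and a web of double coset isomorphisms, and then to show the resulting large diagram commutes by repeatedly invoking condition (4) for $M$ at the subgroup pairs $L\cap P^{(-)}\leq K\cap P^{(-)}$ (with auxiliary subgroup built from $H$), using the associativity of iterated double coset decompositions, which is ultimately the associativity of the orbit products underlying Lemma \ref{lem:propertiesOfMackeyFunctors}(3). This is routine in principle but delicate; it will require fixing compatible systems of double coset representatives for all the cosets involved, and this bookkeeping is where I expect the main difficulty to lie.

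Finally, the assertion that $\mu^G_H\colon N\to M$ is a morphism of $\mathcal{O}$-commutative monoids is immediate once the structure on $N$ is in place. That $\mu^G_H$ is a map of monoids in $\mathcal{C}(G)$ is the triangle identity (1) together with strong monoidality, and the remaining requirement is precisely that, for each $\mathcal{O}$-admissible pair $L\leq K$, one has $R^G_K(\mu^G_H)\circ\alpha^K_L=\mu^K_L\circ N^K_LR^G_L(\mu^G_H)$. This is exactly the commuting diagram displayed just before the statement, whose commutativity is condition (4) of Definition \ref{defn:coherence} for $M$ read through the isomorphism $R^G_KN^G_HR^G_HM\cong\bigotimes_{\beta_i}N^K_{K\cap H^{\beta_i}}R^G_{K\cap H^{\beta_i}}M$, so no additional argument is needed.
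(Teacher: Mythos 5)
Your proposal is correct and follows essentially the same route as the paper: the paper also takes the maps $\alpha^K_L$ constructed in the preceding discussion as the norm multiplications on $N^G_HR^G_HM$, and reads off that $\mu^G_H$ is a morphism of $\mathcal{O}$-commutative monoids from the displayed commuting square, which is condition (4) for $M$ passed through the double coset isomorphism. The paper in fact leaves the coherence verifications as "indicated" by that discussion, so your more explicit accounting of conditions (1)--(4), including the bookkeeping for (4), is consistent with and somewhat more detailed than the paper's own argument.
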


	\begin{remark}
		Proposition \ref{prop:inducedMonoid} should be thought of as a weak form of Proposition \ref{prop:HHRNormAdjunction} in our setting.  While we do not assert (yet) that the norm and restrictions give an adjunction on categories of $\mathcal{O}$-commutative monoids, we have that the map $\mu^G_H$ lives in the correct category to be a candidate for a counit map.  This provides an important technical step in our proof of the generalized Hoyer--Mazur theorem in Section \ref{section:MainTheorem}.
	\end{remark}

\section{The $\mathcal{O}_m$-symmetric monoidal Mackey functor of $\mathcal{O}_a$-Mackey functors}\label{section:biincompleteAsMonoids}

Let $\compPair$ be a compatible pair of indexing categories.  In this section we construct a symmetric monoidal $\mathcal{O}_m$-Mackey functor of $\mathcal{O}_a$-Mackey functors. This construction has previously been carried out in the complete case $\compPair = (\mathcal{O}^{gen},\mathcal{O}^{gen})$ by Mazur \cite{KMazurThesis} for $G=C_{p^n}$ and by Hoyer \cite{Hoyer} for general $G$. This section lays the groundwork for Section \ref{section:MainTheorem} where we generalize the Hoyer--Mazur theorem by characterizing $\compPair$-Tambara functors as the $\mathcal{O}_m$-commutative monoids in $\mathcal{O}_a$-Mackey functors.

Our first aim is to construct the restriction, norm, and conjugation functors which make up the structure of our $\mathcal{O}_m$-symmetric monoidal Mackey functor.  We first define analogous functors on Burnside categories with the aim being to upgrade these to operations on Mackey functor categories by left Kan extending.  The functor underlying the norm is an extension of the coinduction functor $\map_K(H,-)\colon \Set^K\to \Set^H$. A priori, it is not clear that coinduction extends to a functor of incomplete Burnside categories.  In the case of interest, our work in Section \ref{section:transferSystems} on compatible transfer systems, specifically Corollary \ref{cor:firstConjecture}, provides exactly the justification we need to give such an extension. 

A convenient consequence of defining all of our operations as left Kan extensions is that checking the coherence data of Definition \ref{defn:symMack} reduces to checking for similar coherence at the level of functors on the Burnside category. In particular, the proof that the double coset formula holds amounts to the fact that a similar formula holds on the level of sets with group action.  Since the functors on the Burnside category are very explicit, this data is easy to check by hand. 

For any subgroup $H\leq G$, we have an indexing category $i^*_H\mathcal{O}_a$ as in Example \ref{ex:restrictedIndexingCat}.  To clean up notation in this section, we denote the associated incomplete Burnside categories $\burnside_{i_H^*\mathcal{O}_a}^H$ by $\burnside^H_{\mathcal{O}_a}$. Similarly, we denote the category of $i_H^*\mathcal{O}_a$-$H$-Mackey functors by $\Mack^H_{\mathcal{O}_a}$.

For subgroups $K\leq H$, the categories $\Set^K$ and $\Set^H$ are connected by the functors
\begin{align*}
	\textrm{Res}^H_K\colon \Set^H & \to \Set^K, \\
	\map_K(H,-)\colon \Set^K & \to \Set^H.
\end{align*}

For any choices of $K$ and $H$ the functor $\textrm{Res}^H_K$ extends to a functor $\rho^H_K\colon \burnside^H_{\mathcal{O}_a}\to \burnside^K_{\mathcal{O}_a}$ because $\textrm{Res}^H_K$, being the right adjoint to induction, preserves pullback diagrams and restricts to a functor $i_H^*\mathcal{O}_a\to i_K^*\mathcal{O}_a$.  The coinduction functor does not always extend but, as noted above, there exists an extension $C_K^H\colon \burnside^K_{\mathcal{O}_a}\to \burnside^H_{\mathcal{O}_a}$ when $H/K$ is $\mathcal{O}_m$-admissible; coinduction preserves pullbacks because it is a right adjoint and it restricts to a functor on indexing categories by Corollary \ref{cor:firstConjecture}.

Similarly, for any $g\in G$ we have a conjugation isomorphism $c_g\colon \Set^H\to \Set^{H^g}$.  Abusing notation a bit, this extends to an isomorphism $c_g\colon\burnside^H_{\mathcal{O}_a}\to \burnside^{H^g}_{\mathcal{O}_a}$ of Burnside categories.

\begin{defn}\label{defn:normsAndRestrictions}
	For any $H\leq G$ and subgroup $K\leq H$ the \emph{restriction functor} $R^H_K\colon \Mack^H_{\mathcal{O}_a}\to \Mack_{\mathcal{O}_a}^K$ and \emph{conjugation functor} $c_g\colon\Mack^H_{\mathcal{O}_a}\to \Mack^{H^g}_{\mathcal{O}_a}$ are given by left Kan extension along $\rho^H_K$ and $c_g$ respectively.  If $H/K$ is $\mathcal{O}_m$-admissible, define the \emph{norm functor} $N_K^H\colon \Mack^K_{\mathcal{O}_a}\to \Mack^H_{\mathcal{O}_a}$ by left Kan extension along $C^H_K$.
\end{defn}

\begin{remark}\label{rem:colimitCommuting}
	A priori, the left Kan extension of an incomplete $K$-Mackey functor $M\colon \burnside^K_{\mathcal{O}_a} \to \Set$ along $C_K^H\colon \burnside^K_{\mathcal{O}_a}\to \burnside^H_{\mathcal{O}_a}$ is an object in the presheaf category $\mathrm{Fun}(\burnside^H,\Set)$.  That $N_K^H(M) = (C_K^H)_!M$ is actually a Mackey functor (\ie preserves products) follows from the main result of \cite{borceux_day_1977}.  While
	\[
		(C_K^H)_!\colon \mathrm{Fun}(\burnside_{\mathcal{O}_a}^K,\Set)\to \mathrm{Fun}(\burnside_{\mathcal{O}_a}^H,\Set)
	\]
	is the left adjoint of $(C_K^H)^*$, it is worth noting that $N_K^H\colon \Mack^K\to \Mack^H$ is \emph{not} a left adjoint.  In particular, the norm $N_K^H$ will usually fail to commute with colimits computed in the category of Mackey functors. Nevertheless, the norm does commute with colimits computed in the presheaf category $\mathrm{Fun}(\burnside_{\mathcal{O}_a}^K,\Set)$ which is sufficient for many purposes.
\end{remark}

Since the norm, restriction, and conjugation functors are defined via left Kan extension, we can compute their value on represented functors using the Yoneda Lemma.
\begin{lem}\label{lem:normOfRepresented}
	Let $L\leq K\leq H$ be a chain of subgroups with $H/K$ an $\mathcal{O}_m$-admissible $H$-set and let $g\in G$.  For any $K$-set $X$, let $A_{X} = \burnside_{\mathcal{O}_a}^K(X,-)$ be the represented Mackey functor of Example \ref{ex:represented}.  The norm, restriction, and conjugation of $A_X$ are can be computed as $N_K^H(A_X) \cong A_{\map_K(H,X)}$, $R^{K}_L(A_X) \cong A_{\textrm{Res}^K_L(X)}$, and $c_g(A_X) = A_{c_g(X)}$
\end{lem}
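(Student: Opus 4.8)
The statement is a formal consequence of the principle that left Kan extension along a functor carries a covariantly represented functor to the functor represented by the image of the representing object. The plan is to isolate this principle and then apply it three times.

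First I would record the following general fact. Let $F\colon\mathcal{C}\to\mathcal{D}$ be a functor between small categories and let $c\in\mathcal{C}$. Then, writing $F_!$ for left Kan extension along $F$, there is a natural isomorphism $F_!\bigl(\mathcal{C}(c,-)\bigr)\cong\mathcal{D}(F(c),-)$ of copresheaves on $\mathcal{D}$. Indeed, for any functor $G\colon\mathcal{D}\to\Set$ the adjunction $F_!\dashv F^*$, where $F^*G=G\circ F$, together with the Yoneda lemma yields
\[
	\mathrm{Nat}\bigl(F_!\mathcal{C}(c,-),\,G\bigr)\cong\mathrm{Nat}\bigl(\mathcal{C}(c,-),\,G\circ F\bigr)\cong G\bigl(F(c)\bigr)\cong\mathrm{Nat}\bigl(\mathcal{D}(F(c),-),\,G\bigr),
\]
and the claim follows by a further application of the Yoneda lemma, this time in $\mathrm{Fun}(\mathcal{D},\Set)$. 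Since $\mathcal{D}(F(c),-)$ preserves all limits that exist in $\mathcal{D}$, and Burnside categories have finite products, this identification already takes place at the level of (semi-)Mackey functors when $\mathcal{C}$ and $\mathcal{D}$ are incomplete Burnside categories; in particular it sidesteps, in the represented case, the appeal to \cite{borceux_day_1977} from Remark \ref{rem:colimitCommuting}.

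I would then specialize. For the norm, take $F=C_K^H\colon\burnside^K_{\mathcal{O}_a}\to\burnside^H_{\mathcal{O}_a}$, which exists by Corollary \ref{cor:firstConjecture} because $H/K$ is $\mathcal{O}_m$-admissible, and $c=X$. By construction $C_K^H$ acts on objects by the coinduction functor $\map_K(H,-)$, so the general fact gives
\[
	N_K^H(A_X)=(C_K^H)_!\,\burnside^K_{\mathcal{O}_a}(X,-)\cong\burnside^H_{\mathcal{O}_a}\bigl(\map_K(H,X),-\bigr)=A_{\map_K(H,X)}.
\]
Taking instead $F=\rho^K_L$, which acts on objects by $\textrm{Res}^K_L$, gives $R^K_L(A_X)\cong A_{\textrm{Res}^K_L(X)}$, and taking $F=c_g$, which acts on objects by the conjugation isomorphism $\Set^K\to\Set^{K^g}$, gives $c_g(A_X)\cong A_{c_g(X)}$.

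Because the argument is formal, I do not expect a genuine obstacle; the points that require care are all bookkeeping. First, one must confirm that the Burnside-category functors $C_K^H$, $\rho^K_L$, and $c_g$ agree on objects with the set-level functors $\map_K(H,-)$, $\textrm{Res}^K_L$, and $c_g$, which is immediate from the way they are built in the discussion preceding Definition \ref{defn:normsAndRestrictions}, since each extends the corresponding functor of finite $G$-sets and the Burnside categories have finite $G$-sets as objects. Second, one should check that the three isomorphisms are natural in $X$, which follows from naturality of the Yoneda isomorphisms used above. Finally, since $A_X$ is a semi-Mackey functor, the identifications are most naturally phrased between semi-Mackey functors, or equivalently in the presheaf categories where the left Kan extensions a priori take values; in either formulation the computation is verbatim the same.
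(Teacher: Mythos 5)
Your proof is correct and is exactly the argument the paper intends: the paper gives no more than the one-line remark that the values on represented functors follow from the Yoneda lemma, and your computation $F_!\bigl(\mathcal{C}(c,-)\bigr)\cong\mathcal{D}(F(c),-)$ via the adjunction $F_!\dashv F^*$ is the standard way to make that precise. The added observation that corepresentables preserve products, so no appeal to Remark \ref{rem:colimitCommuting} is needed in this case, is a correct and welcome refinement.
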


While our definition of $N^H_K$ is completely analogous to Hoyer's, our definition of the restrictions $R^H_K$ needs some justification.  To define the restrictions, Hoyer defines a functor $I_K^H\colon \burnside^K\to \burnside^H$, which is the extension of the induction functor $H\times_K(-)\colon \Set^K\to \Set^H$, and defines $R^H_K$ by precomposition with $I^H_K$. While this definition still makes sense, it is convenient to define the restrictions as a left Kan extension because it makes them easier to compare with the norm functors. For completeness, we show our definition is equivalent to Hoyer's.

\begin{proposition}\label{prop:ambidexterity}
	The functors $\rho^H_K$ and $I^H_K$ form an ambidextrous adjunction.  That is, each is both a left and right adjoint of the other.
\end{proposition}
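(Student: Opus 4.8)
The plan is to establish both adjunctions by hand, each as a natural bijection between the relevant span-sets. Since composition in $\burnside^H_{\mathcal{O}_a}$ and $\burnside^K_{\mathcal{O}_a}$ is computed by pullback, and $\textrm{Res}^H_K$ and $H\times_{K}(-)$ both preserve pullbacks, it will be enough to produce such a bijection compatible with composing on either side; the recipes below make this transparent. The mechanism I would use is the slice equivalence attached to induction: for a $K$-set $X$, pulling back along the canonical inclusion $\tilde X\hookrightarrow H\times_{K}X$ (the image of $\{e\}\times X$) and the functor $H\times_{K}(-)$ are mutually inverse equivalences $\Set^{H}_{/(H\times_{K}X)}\simeq\Set^{K}_{/X}$, under which the remaining projection of an object over $H\times_{K}X$ transposes, via the underlying adjunction $H\times_{K}(-)\dashv\textrm{Res}^H_K$, to a map over $X$. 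I would feed the two legs of a span into this machine.

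First I would prove $\rho^H_K\dashv I^H_K$. A morphism $Y\to I^H_K X$ in $\burnside^H_{\mathcal{O}_a}$ is a span $[Y\xleftarrow{r}A\xrightarrow{t}H\times_{K}X]$ with $t\in i^*_H\mathcal{O}_a$. Running the pair $(A,t)$ over $H\times_{K}X$ through the slice equivalence produces a $K$-set $B=t^{-1}(\tilde X)$, a $K$-map $\bar t\colon B\to X$, an isomorphism $A\cong H\times_{K}B$ carrying $t$ to $H\times_{K}\bar t$, and — transposing $r$ — a $K$-map $\bar r\colon B\to\rho^H_K Y$; these assemble into a span $[\rho^H_K Y\xleftarrow{\bar r}B\xrightarrow{\bar t}X]$, and the construction is visibly reversible on isomorphism classes. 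The only point needing argument is that $t\in i^*_H\mathcal{O}_a$ if and only if $\bar t\in i^*_K\mathcal{O}_a$; since $t=H\times_{K}\bar t$, one direction is Example \ref{ex:restrictedIndexingCat}, and the other follows from Lemma \ref{lem:stabilizerQuotients} once one notes that the stabilizer subquotient of $H\times_{K}\bar t$ at $[e,b]$ is carried isomorphically onto that of $\bar t$ at $b$. Naturality in both variables is routine. This yields $\rho^H_K\dashv I^H_K$; in particular left Kan extension along $\rho^H_K$ coincides with precomposition by $I^H_K$, which is Hoyer's restriction functor, so this half already delivers the comparison promised in the text.

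Finally I would prove $I^H_K\dashv\rho^H_K$ by repeating the argument on a span $[H\times_{K}X\xleftarrow{r}A\xrightarrow{t}Y]$, but now factoring through the slice equivalence along the leg $r$: this gives $A\cong H\times_{K}B$ with $B=r^{-1}(\tilde X)$ and $r=H\times_{K}\bar r$, and transposing the other leg $t\colon H\times_{K}B\to Y$ produces $\bar t\colon B\to\rho^H_K Y$, hence the span $[X\xleftarrow{\bar r}B\xrightarrow{\bar t}\rho^H_K Y]$. The delicate step — which I expect to be the main obstacle — is that $\bar t$ is no longer gotten from $t$ merely by inducing up, so the indexing-category condition must be checked to transport: by Lemma \ref{lem:stabilizerQuotients}, $t\in i^*_H\mathcal{O}_a$ is governed by admissibility of the subquotients $H_{\bar t(b)}/K_b$, whereas $\bar t\in i^*_K\mathcal{O}_a$ is governed by that of $K_{\bar t(b)}/K_b=(K\cap H_{\bar t(b)})/K_b$. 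One implication is immediate from Lemma \ref{intersectionLemma} (intersect with $K$); the reverse implication is where the closure of admissible sets under self-induction and conjugation has to be brought to bear, and this is the step I would invest the most care in. Once it is secured, the span bijection closes up exactly as before, giving $I^H_K\dashv\rho^H_K$ and hence the asserted ambidexterity.
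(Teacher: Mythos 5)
Your overall strategy is the same as the paper's: both arguments produce the adjunction bijections directly at the level of spans, normalizing the leg touching $H\times_K X$ via the equivalence $\Set^H_{/(H\times_K X)}\simeq \Set^K_{/X}$ (the paper phrases this as $I^H_K$ being an essential sieve) and transposing the other leg across the underlying $H\times_K(-)\dashv \mathrm{Res}^H_K$ adjunction. Your first half, $\rho^H_K\dashv I^H_K$, is correct, and for exactly the reason you give: in that direction the constrained transfer leg is literally of the form $H\times_K\bar t$, and $H\times_K\bar t\in i_H^*\mathcal{O}_a$ if and only if $\bar t\in i_K^*\mathcal{O}_a$ because $G\times_H(H\times_K\bar t)\cong G\times_K\bar t$ (Example \ref{ex:restrictedIndexingCat}); no stabilizer computation is even needed. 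You are also right that this half alone yields $(I^H_K)^*\dashv(\rho^H_K)^*$ and hence Corollary \ref{cor:restrictionAsLeftKanExtension}, which is the only place the proposition is used downstream.

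The second half is where the genuine gap lies, and it is not merely a step needing more care: the implication you defer is false as framed. To send a span $[X\leftarrow B\xrightarrow{\bar t}\rho^H_K Y]$ with $\bar t\in i_K^*\mathcal{O}_a$ back to $[I^H_KX\leftarrow I^H_KB\xrightarrow{\widehat{\bar t}}Y]$, you need admissibility of $(K\cap H_{\bar t(b)})/K_b$ to force admissibility of $H_{\bar t(b)}/K_b$. Take $B=X=K/K$, $Y=H/H$, and $\bar t$ the identity: the hypothesis is vacuous, while the conclusion asserts that $H/K$ is $\mathcal{O}_a$-admissible. For $\mathcal{O}_a=\mathcal{O}^{tr}$ and $K<H$ this fails, and indeed $\burnside^H_{\mathcal{O}_a}(H/K,H/H)$ consists of the zero span alone while $\burnside^K_{\mathcal{O}_a}(K/K,K/K)$ contains at least the identity, so no bijection exists. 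No amount of self-induction or conjugation will close this step. Be aware that the paper's own treatment of this point --- factoring the adjunct as $\epsilon_Y\circ I^H_K(\bar t)$ and asserting that the counit $\epsilon_Y\colon I^H_K\rho^H_K Y\to Y$ is a coproduct of fold maps --- meets the same obstruction, since $I^H_K\rho^H_K Y\cong H/K\times Y$ with $\epsilon_Y$ the projection, whose orbitwise stabilizer subquotients are conjugates of $H_y/(K\cap H_y)$. The honest conclusion is that only $\rho^H_K\dashv I^H_K$ holds for arbitrary $K\leq H$ and arbitrary $\mathcal{O}_a$; the other adjunction requires a hypothesis guaranteeing that $H/K$ is $\mathcal{O}_a$-admissible (e.g.\ $H/K$ being $\mathcal{O}_m$-admissible in a compatible pair, via $\mathcal{O}_m\leq\mathcal{O}_a$), after which your pointwise argument closes using Lemmas \ref{intersectionLemma} and \ref{lem:closureUnderSelfInduction}. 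Your instinct to isolate this step was exactly right; it cannot be completed without such a hypothesis.
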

\begin{proof}
	To clean up notation, we write simply $\rho$ and $I$.  We first show there are natural bijections:
	\[
		\burnside_{\mathcal{O}_a}^K(X,\rho Y)\leftrightarrow \burnside_{\mathcal{O}_a}^H(IX,Y)
	\]
	Going from left to right, we send a morphism $[X\xleftarrow{r} A\xrightarrow{t} \rho Y]$ to $[IX\xleftarrow{Ir}IA\xrightarrow{\hat{t}}Y]$ where the map $\hat{t}\colon IA\to Y$ is the adjunct of the map $t$ along the adjunction between induction and restriction of $H$-sets.  To see that $\hat{t}$ is a morphism in the indexing category $i_H^*\mathcal{O}_a$, consider the decomposition of $\hat{t}$ as
	\[
		IA\xrightarrow{It} I\rho Y \xrightarrow{\epsilon} Y
	\]
	where $\epsilon$ is the counit of the adjunction on $H$-sets.  The map $It$ is in $i_H^*\mathcal{O}_a$ by closure under self-induction (Lemma \ref{lem:closureUnderSelfInduction}), so it remains to show $\epsilon$ is a morphism in the indexing category.  The counit is the map
	\[
		H\times_K \mathrm{Res}^H_KY\to Y
	\] 
	which sends a class $[h,y]$ to the element $hy$.  It is an easy exercise that the stabilizer of $[h,y]$ in $H\times_K \mathrm{Res}^H_KY$ is exactly $h\textrm{Stab}_H(y)h^{-1}$, which is also the stabilizer of $hy$.  It follows that the counit is the coproduct of many fold maps, and thus is in $i_H^*\mathcal{O}_a$ by finite coproduct completeness.
	
	To build the inverse, suppose we are given a morphism $[IX\xleftarrow{p} B\xrightarrow{s} Y]$.  By Proposition 2.16 of \cite{BlumbergHillIncomplete}, the functor $I$ is an \emph{essential sieve} meaning there is a $K$-set $B'$ and a $K$-equivariant map $p'\colon B'\to X$ so that $B\cong IB'$ and $p$ factors as $B\cong IB'\xrightarrow{Ip'} IX$.  It follows that any morphism $[IX\xleftarrow{p} B\xrightarrow{s} Y]$ in $\burnside^H(IX,Y)$ is equal to one of the form $[IX\xleftarrow{Ip'} IB'\xrightarrow{s} Y]$.  We send such a morphism to $[X\xleftarrow{p'} B' \xrightarrow{\hat{s}} \rho Y]$, where again $\hat{s}$ is coming from the adjunction between restriction and induction.  Showing $\hat{s}$ is a morphism in $i_K^*\mathcal{O}_a$ is similar to the above argument for the adjunct $\hat{t}$.
	
	That the two constructions described above are inverse to one another can be understood by considering what happens to the restriction and transfer maps of $[X\xleftarrow{r} A \xrightarrow{t} \rho Y]$ separately.  For the transfer, both constructions simply replace the map by its adjunct across the $\mathrm{Ind}_K^H\dashv \mathrm{Res}_K^H$ adjunction.  For the restriction, the first construction applies $I$, and the second uses the fact that $I$ is an essential sieve to undo this.  It follows that the two maps described above are mutually inverse.  Moreover, since both the $\mathrm{Ind}_K^H\dashv \mathrm{Res}_K^H$ adjunction and the essential sieve property of $I$ are natural, this bijection is also natural in either argument establishing that $\rho$ is the right adjoint of $I$.  That this adjunction is ambidextrous is established by simply turning all the spans around are repeating the construction.
\end{proof}

\begin{cor}\label{cor:restrictionAsLeftKanExtension}
	The functor $(I_K^H)^*\colon \Mack^H\to \Mack^K$ is a model for left Kan extension along $\rho_K^H$ and thus $(I_K^H)^*\cong R^H_K$.
\end{cor}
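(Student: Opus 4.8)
The plan is to deduce the corollary formally from the ambidextrous adjunction of Proposition \ref{prop:ambidexterity}, via the standard principle that left Kan extension along a functor possessing a right adjoint is computed by precomposing with that right adjoint. Let me recall the principle: if $L\colon \mathcal{C}\to\mathcal{D}$ is left adjoint to $R\colon \mathcal{D}\to\mathcal{C}$, then for any $M\colon \mathcal{C}\to\Set$ and any $d\in\mathcal{D}$ the comma category $L\downarrow d$ that indexes the colimit defining the value of the left Kan extension at $d$ is carried, by the adjunction isomorphism $\mathcal{D}(Lc,d)\cong\mathcal{C}(c,Rd)$, to the comma category $\mathcal{C}\downarrow Rd$; the latter has a terminal object, namely $\id_{Rd}$, so the colimit is attained there and the value is $M(Rd)$, naturally in $d$. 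In other words the left Kan extension along $L$ is naturally isomorphic to $R^{*}$ on the relevant presheaf categories.

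I would then apply this with $L=\rho^H_K$ and $R=I^H_K$, using the half of Proposition \ref{prop:ambidexterity} which asserts $\rho^H_K\dashv I^H_K$. This yields a natural isomorphism between left Kan extension along $\rho^H_K$ and $(I^H_K)^{*}$ as functors $\mathrm{Fun}(\burnside^H_{\mathcal{O}_a},\Set)\to\mathrm{Fun}(\burnside^K_{\mathcal{O}_a},\Set)$. By Definition \ref{defn:normsAndRestrictions} the restriction functor $R^H_K$ is exactly this left Kan extension, viewed as a functor on the subcategories of product-preserving functors, so it only remains to observe that $(I^H_K)^{*}$ preserves the property of being product-preserving: indeed $I^H_K$ extends the induction functor $H\times_K(-)$, which preserves finite coproducts, hence $I^H_K$ preserves the biproduct on the Burnside category, and precomposition with a biproduct-preserving functor carries semi-Mackey functors to semi-Mackey functors. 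Thus $(I^H_K)^{*}$ restricts to a functor $\Mack^H_{\mathcal{O}_a}\to\Mack^K_{\mathcal{O}_a}$ which coincides with $R^H_K$, and this is precisely Hoyer's definition of the restriction.

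I do not expect a genuine obstacle here: once Proposition \ref{prop:ambidexterity} is available the argument is entirely formal. The only points calling for care are bookkeeping --- making sure to invoke the adjunction in the direction $\rho^H_K\dashv I^H_K$, so that one precomposes with $I^H_K$ and not with $\rho^H_K$ --- and the verification that $(I^H_K)^{*}$ lands in Mackey functors rather than merely presheaves; the latter is immediate from the precomposition description, so in this instance one does not need to route through the reflection argument of \cite{borceux_day_1977} mentioned in Remark \ref{rem:colimitCommuting}.
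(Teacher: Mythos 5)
Your proposal is correct and is exactly the argument the paper intends: the corollary is stated as an immediate consequence of Proposition \ref{prop:ambidexterity}, via the standard fact that left Kan extension along a left adjoint is precomposition with its right adjoint, applied to the half $\rho^H_K\dashv I^H_K$ of the ambidextrous adjunction. Your additional check that $(I^H_K)^*$ preserves product-preserving functors (because induction preserves disjoint unions) is a worthwhile point the paper leaves implicit, but it does not change the route.
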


We need to show that both $R^H_K$, $N^H_K$, and $c_g$ are strong monoidal functors.  The monoidal product on the categories $\Mack^H$ and $\Mack^K$ is the box product, defined by Day convolution \cite{Day}.  Briefly, if $M$ and $N$ are $H$-Mackey functors we define $M\squarebin N$ by the left Kan extension diagram
\[
\begin{tikzcd}
\burnside_{\mathcal{O}_a}^H\times\burnside_{\mathcal{O}_a}^H \ar["\times"']{d} \ar["M\times N"]{r} & \Set\\
\burnside_{\mathcal{O}_a}^H \ar["M\squarebin N = \mathrm{Lan}_\times (M\times N)"']{ur} &
\end{tikzcd}
\]
where $M\times N$ sends $(S,T)$ to $M(S)\times N(T)$ and the vertical map $\times$ is given by cartesian product on $H$-sets. 

\begin{lem}\label{lem:strongMonoidal}
	For any $K\leq H$, the functors $N^H_K$ (assuming $H/K$ is $\mathcal{O}_m$-admissible), $R^H_K$, and $c_g$ are all strong monoidal.
\end{lem}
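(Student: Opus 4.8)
The plan is to exploit the fact that the box product is Day convolution along the cartesian product of $G$-sets, together with the standard principle that left Kan extension along a (strong) monoidal functor is strong monoidal for Day convolution. First I would record that the cartesian product $-\times-$ of $H$-sets genuinely extends to a strict symmetric monoidal structure on the Burnside category $\burnside^H_{\mathcal{O}_a}$: the only point to check is that, given spans whose forward legs lie in $i^*_H\mathcal{O}_a$, the forward leg $A_1\times A_2\to T_1\times T_2$ of the product span lies in $i^*_H\mathcal{O}_a$ as well, and this follows from pullback stability since that map factors as $A_1\times A_2\to T_1\times A_2\to T_1\times T_2$, each factor being a base change of one of $A_i\to T_i$ along a projection. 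The monoidal unit is the terminal $H$-set, whose represented functor is the unit for the box product. This is precisely the sense, advertised at the start of this section, in which checking the coherence of Definition \ref{defn:symMack} reduces to coherence of functors on the Burnside category.

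Next I would verify that each of the three underlying Burnside-category functors $\rho^H_K$, $C^H_K$, and $c_g$ is strictly symmetric monoidal for this product. Both $\rho^H_K$ and $c_g$ act spanwise by applying, respectively, $\mathrm{Res}^H_K$ and the conjugation isomorphism $\Set^H\to\Set^{H^g}$ to every object of a span; since each of those set-level functors carries a cartesian product of $H$-sets to the cartesian product of the images on the nose and fixes the terminal object, $\rho^H_K$ and $c_g$ are strict symmetric monoidal. The functor $C^H_K$ likewise acts spanwise — which is well-defined precisely by Corollary \ref{cor:firstConjecture} — by applying coinduction $\map_K(H,-)$ to each object. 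The crucial observation is that, although $\map_K(H,-)$ famously fails to preserve coproducts, it is right adjoint to $\mathrm{Res}^H_K$ by Example \ref{ex:coinductionAsDependentProduct} and hence preserves all limits, in particular finite products: $\map_K(H,S\times T)\cong\map_K(H,S)\times\map_K(H,T)$ and $\map_K(H,-)$ fixes the terminal object. Thus $C^H_K$ is strict symmetric monoidal as well.

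Then I would invoke the coend computation: for a strict (or strong) monoidal $\phi\colon\mathcal{C}\to\mathcal{D}$, the left Kan extension $\phi_!$ on presheaf categories is strong monoidal for Day convolution, both $\phi_!(M\squarebin N)$ and $\phi_! M\squarebin\phi_! N$ evaluated at $d$ reducing, by co-Yoneda and the fact that $\phi_!$ preserves representables, to $\int^{c_1,c_2}\mathcal{D}(\phi(c_1\otimes c_2),d)\times M(c_1)\times N(c_2)$; the monoidal unit is preserved because $\phi$ is unital. Applying this with $\phi=\rho^H_K$, $C^H_K$, $c_g$ — which by Definition \ref{defn:normsAndRestrictions} present $R^H_K$, $N^H_K$, $c_g$ as $\phi_!$ — yields strong monoidal functors on the presheaf categories $\mathrm{Fun}(\burnside^\bullet_{\mathcal{O}_a},\Set)$. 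It then remains only to note that this structure restricts to $\Mack^\bullet_{\mathcal{O}_a}$: box products of Mackey functors are again Mackey functors, and by Remark \ref{rem:colimitCommuting} (citing \cite{borceux_day_1977}) the functors $\phi_!$ preserve product-preservation, so the associativity, unit, and symmetry isomorphisms all live among Mackey functors.

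The step needing the most care is $N^H_K$, since coinduction is the least well-behaved of the three set-level functors: it does not commute with colimits of Mackey functors, so one cannot naively reduce to generators in the category of Mackey functors. What rescues the argument is exactly that Day convolution is built from the \emph{cartesian} product, which coinduction does preserve, and that the norm commutes with colimits computed in the \emph{presheaf} category (Remark \ref{rem:colimitCommuting}). A fully hands-on alternative would combine Lemma \ref{lem:normOfRepresented}, the identification $\burnside^K_{\mathcal{O}_a}(X,-)\squarebin\burnside^K_{\mathcal{O}_a}(Y,-)\cong\burnside^K_{\mathcal{O}_a}(X\times Y,-)$ of box products of representables, and $\map_K(H,X\times Y)\cong\map_K(H,X)\times\map_K(H,Y)$, then extend from representables by density; but the coend bookkeeping above packages this more efficiently and delivers the coherence isomorphisms at the same time.
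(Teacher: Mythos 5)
Your proposal is correct and follows essentially the same route as the paper: observe that $\rho^H_K$, $C^H_K$, and $c_g$ preserve cartesian products on the level of Burnside categories (coinduction because it is a right adjoint), hence are strong monoidal for the Day-convolution structure, and then apply the general fact that left Kan extension along a strong monoidal functor is strong monoidal. The extra details you supply — that the product of spans has forward leg in $i^*_H\mathcal{O}_a$ by pullback stability, and that everything restricts from presheaves to Mackey functors via \cite{borceux_day_1977} — are correct and fill in points the paper leaves implicit.
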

\begin{proof}
	 Because the functors $\rho_K^H$, $C_{K}^H$, and $c_g$ all preserve cartesian products, they are strong monoidal functors.  The result follows from the general fact (see \cite{DayStreetKanExtensions}) that left Kan extension along strong monoidal functors is a strong monoidal.  
\end{proof}

\begin{proposition}
	With the choices of norms, restrictions, and conjugations from Definition \ref{defn:normsAndRestrictions}, the assignment $G/H\mapsto \Mack^H_{\mathcal{O}_a}$ forms a symmetric monoidal $\mathcal{O}_m$-Mackey functor.
\end{proposition}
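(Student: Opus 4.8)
The plan is to verify the data and the three coherence axioms of Definition \ref{defn:symMack} for the assignment $\mathcal{C}(H)=\Mack^H_{\mathcal{O}_a}$. Each $\mathcal{C}(H)$ is symmetric monoidal under the box product: the cartesian product of $H$-sets makes $\burnside^H_{\mathcal{O}_a}$ symmetric monoidal --- if $f,g\in i_H^*\mathcal{O}_a$ then so is $f\times g$, since $f\times\id$ and $\id\times g$ are base changes of $f$ and $g$ along projections and $i_H^*\mathcal{O}_a$ is pullback stable --- and the Day convolution of presheaves on a symmetric monoidal category is symmetric monoidal \cite{Day}. Strong monoidality of $N^H_K$ (when $H/K$ is $\mathcal{O}_m$-admissible), $R^H_K$, and $c_g$ is Lemma \ref{lem:strongMonoidal}, so the remaining work is coherence (1)--(3). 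The organizing principle is that $N^H_K$, $R^H_K$, and $c_g$ are left Kan extensions along the functors $C^H_K$, $\rho^H_K$, and $c_g$ of incomplete Burnside categories, which are themselves extensions of the functors $\map_K(H,-)$, $\mathrm{Res}^H_K$, and $c_g$ on finite $G$-sets. Hence every coherence isomorphism will be obtained by left Kan extending a corresponding isomorphism of Burnside-category functors, which is in turn induced by a classical natural isomorphism of $\Set$-level functors; this uses pseudofunctoriality of left Kan extension, $\mathrm{Lan}_F\,\mathrm{Lan}_E\cong\mathrm{Lan}_{FE}$, together with Lemma \ref{lem:normOfRepresented} and the density of represented Mackey functors.

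For coherence (1) and (2) one starts from the $\Set$-level identities $\mathrm{Res}^K_L\mathrm{Res}^H_K=\mathrm{Res}^H_L$, the iterated coinduction isomorphism $\map_L(K,\map_K(H,-))\cong\map_L(H,-)$, $c_gc_h=c_{gh}$, and the standard compatibilities of conjugation with restriction and with coinduction. Each of these extends to the incomplete Burnside categories; the one thing requiring care is that the composite functors land in the correct incomplete Burnside category, which is Corollary \ref{cor:firstConjecture} for iterated norms and Lemma \ref{closureUnderConjugation} for conjugation. Left Kan extending then produces the isomorphisms $N^H_KN^K_L\cong N^H_L$, $R^K_LR^H_K\cong R^H_L$, $c_gc_h\cong c_{gh}$, $c_gN^H_K\cong N^{H^g}_{K^g}c_g$, and $c_gR^H_K\cong R^{H^g}_{K^g}$ demanded by Definition \ref{defn:symMack}.

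The substantive axiom is coherence (3), the double coset formula, and its input is the \emph{multiplicative} double coset formula for coinduction of $G$-sets: for subgroups $K,L\leq H$ with $H/L$ an $\mathcal{O}_m$-admissible $H$-set and double coset representatives $\gamma_1,\dots,\gamma_n$ for $K\backslash H/L$, there is a natural isomorphism of $K$-sets
\[
	\mathrm{Res}^H_K\,\map_L(H,X)\;\cong\;\prod_{\gamma_i}\map_{K\cap L^{\gamma_i}}\!\big(K,\;\mathrm{Res}^{L^{\gamma_i}}_{K\cap L^{\gamma_i}}c_{\gamma_i}X\big),
\]
the evident dual of the familiar formula $\mathrm{Res}^H_K\mathrm{Ind}^H_LX\cong\coprod_{\gamma_i}\mathrm{Ind}^K_{K\cap L^{\gamma_i}}\mathrm{Res}^{L^{\gamma_i}}_{K\cap L^{\gamma_i}}c_{\gamma_i}X$. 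The hypothesis that $H/L$ is $\mathcal{O}_m$-admissible is used twice here: through Corollary \ref{cor:firstConjecture} it ensures coinduction extends to the incomplete Burnside categories appearing on both sides, and through Lemmas \ref{closureUnderConjugation} and \ref{intersectionLemma} it ensures each $K/(K\cap L^{\gamma_i})$ is $\mathcal{O}_m$-admissible, so that the norms $N_{K\cap L^{\gamma_i}}^K$ on the right are defined. To pass to Mackey functors, I would evaluate both composite functors on a represented functor $A_X=\burnside^L_{\mathcal{O}_a}(X,-)$: by Lemma \ref{lem:normOfRepresented} the left-hand side becomes $A_{\mathrm{Res}^H_K\map_L(H,X)}$, while, since the Day convolution of representables is represented by the cartesian product, the right-hand side becomes $A_{\prod_{\gamma_i}\map_{K\cap L^{\gamma_i}}(K,\,\mathrm{Res}^{L^{\gamma_i}}_{K\cap L^{\gamma_i}}c_{\gamma_i}X)}$; the displayed set-level isomorphism then identifies them. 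Both composite functors are cocontinuous between the ambient presheaf categories --- each of $N$, $R$, $c$, and $\squarebin$ is, by Remark \ref{rem:colimitCommuting} and cocontinuity of Day convolution --- so agreement on representables forces the natural isomorphism $R^H_KN^H_L\cong\bigotimes_{\gamma_i}N_{K\cap L^{\gamma_i}}^KR^{L^{\gamma_i}}_{K\cap L^{\gamma_i}}c_{\gamma_i}$ of Definition \ref{defn:symMack}(3).

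The hard part is not any individual one of these steps but the coherence \emph{among} the isomorphisms just produced --- for example, compatibility of the double coset isomorphism with the associativity constraints for iterated restriction and with the composition isomorphisms for norms --- along with the naturality of the set-level double coset formula needed to compare the functors, and not merely their values, in the previous paragraph. This coherence bookkeeping is exactly what Hoyer \cite{Hoyer} (following Mazur \cite{KMazurThesis}) carries out in the complete case; since all of our functors agree with theirs after discarding the incompleteness, those arguments transplant essentially verbatim once the pieces above are in place. The genuinely new ingredient, and the only point at which compatibility of $\compPair$ intervenes, is the repeated verification that the relevant $\Set$-level maps restrict to the incomplete indexing categories, supplied by Corollary \ref{cor:firstConjecture}, Lemma \ref{lem:closureUnderSelfInduction}, and Lemma \ref{intersectionLemma}. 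I expect the double coset axiom to account for the bulk of the work.
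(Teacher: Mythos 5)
Your proposal is correct and follows essentially the same route as the paper: strong monoidality via Lemma \ref{lem:strongMonoidal}, coherences (1)--(2) from pseudofunctoriality of left Kan extension, and the double coset formula by verifying it on representables via the set-level coinduction double coset formula (which the paper proves separately as Lemma \ref{lem:coinductionDoubleCosetFormula}) and then extending to all Mackey functors by cocontinuity in the presheaf category using Remark \ref{rem:colimitCommuting}.
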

\begin{proof}
	The restrictions norms, and conjugations are all strong monoidal by Lemma \ref{lem:strongMonoidal} so it remain to establish (1)-(3) of Definition \ref{defn:symMack}.  For subgroups $L\leq K\leq H$, there are unique natural isomorphisms $R^K_LR^H_K\cong R^H_L$,  $N^H_KN^K_L\cong N^H_L$, and $c_gc_h\cong c_{gh}$ coming from the fact that left Kan extension along a composite is isomorphic to the composite of left Kan extensions. This establishes (1), and (2) follows similarly so it remains to establish the double coset formula.
	
	For any $L,K\leq H$ and any $L$-set $X$ there is an isomorphism of $H$-sets
	\[
		\textrm{Res}^H_K\map_L(H,X) \cong \prod\limits_{\gamma_i} \map_{K\cap L^{\gamma_i}}(K, \textrm{Res}^{L^{\gamma_i}}_{K\cap L^{\gamma_i}}(c_{\gamma_i}X))
	\]
	where the $\gamma_i$ run over a transversal of the double cosets $K\backslash H/L$.  We defer the proof of this isomorphism to Lemma \ref{lem:coinductionDoubleCosetFormula} below.  For any $L\leq G$ and $L$-set $T$ write $A_{T}$ for the represented Mackey functor $\burnside^L_{\mathcal{O}_a}(T,-)$.  It is a property of Day convolution that there are natural  isomorphisms
	\[
		A_{S\times T}\cong A_S\squarebin A_T
	\] 
	for any pair of $L$-set $S$ and $T$. Using this, Lemma \ref{lem:normOfRepresented} and the set-level isomorphism above it follows that the double coset formula holds for all represented Mackey functors $A_T$.  
	
	To prove the double coset formula for an arbitrary Mackey functor $M$, we consider $M$ as an object in the presheaf category $\textrm{Fun}(A^H,\Set)$ of functors from $A^H$ to $\Set$.  This category, like all presheaf categories, is generated under colimits by the representable functors $A_T$ and so we may write
	\[
		M \cong \liminj\limits_{I} A_{T_i} 
	\]
	for some index category $I$. By Remark \ref{rem:colimitCommuting}, the norm and restriction commute with colimits in $\textrm{Fun}(A^H,\Set)$ so we have
	\begin{align*}
		R^H_KN^H_L(M) & \cong  R^H_KN^H_L\left(\liminj\limits_{I} A_{T_i}  \right)\\
		& \cong \liminj\limits_{I} R^H_KN^H_K(A_{T_i}) \\
		& \cong \liminj\limits_{I} \bigotimes\limits_{\gamma_i} N_{K\cap L^{\gamma_i}}^K R^{L^{\gamma_i}}_{K\cap L^{\gamma_i}} c_{\gamma_i}(A_{T_i})\\
		& \cong \bigotimes\limits_{\gamma_i}N_{K\cap L^{\gamma_i}}^K R^{L^{\gamma_i}}_{K\cap L^{\gamma_i}} c_{\gamma_i}\left(\liminj\limits_{I}A_{T_i}\right)\\
		& \cong \bigotimes\limits_{\gamma_i}N_{K\cap L^{\gamma_i}}^K R^{L^{\gamma_i}}_{K\cap L^{\gamma_i}} c_{\gamma_i}(M)
	\end{align*}
	where the third isomorphism uses the fact that $A_{T_i}$ is represented, and the fourth isomorphism uses the fact that Day convolution, as a left Kan extension, commutes with colimits in the presheaf category.
\end{proof}

\section{Norms and restrictions on categories of Tambara functors}\label{section:TambaraNorms}
	
	In the last section, we endowed the categories of $\mathcal{O}_a$-Mackey functors with the structure of a symmetric monoidal $\mathcal{O}_m$-Mackey functor.  We now turn our attention to characterizing the $\mathcal{O}_m$-commutative monoids in $\mathcal{O}_a$-Mackey functors.  In \cite{blumberghillbiincomplete}, it was conjectured that the  $\mathcal{O}_m$-commutative monoids are exactly the bi-incomplete Tambara functors.  In this section we lay the groundwork for proving this conjecture by studying how Tambara functors interact with the norm and restriction functors for Mackey functors.  We will show, in particular, that the norm or restriction of any Tambara functor is again a Tambara functor.
	
	It is convenient to phrase the main result of this section in slightly different language.  We construct \emph{Tambara norm functors} 
	\[
		\mathcal{N}^H_K\colon \Tamb_{\compPair}^K\to \Tamb_{\compPair}^H
	\]
	for every $\mathcal{O}_m$-admissible $H/K$ analogous to the Mackey norm functors from Definition \ref{defn:normsAndRestrictions}. In Theorem \ref{thm:normComparison} we show these two constructions agree after applying the forgetful functors from Tambara functors to Mackey functors.  Similar results when the indexing categories are complete are due to Hoyer and Mazur \cite{ Hoyer,KMazurThesis} and we adapt the proof of Theorem 2.3.3 in \cite{Hoyer}, and correct a small oversight.  Similarly, we construct \emph{Tambara restriction functors}, and also show these are compatible with the forgetful functors from Tambara functors to Mackey functors. 
	
	For any $K\leq H$, the restriction $\mathrm{Res}^H_K\colon \Set^H\to \Set^K$ and induction $H\times_K(-)\colon \Set^K\to \Set^H$ extend to functors $\rho_K^H\colon \Tburnside_{\compPair}^H  \to \Tburnside_{\compPair}^K$ and $I^H_K\colon\Tburnside_{\compPair}^K  \to \Tburnside_{\compPair}^H$ on polynomial categories.  

	\begin{defn}
		The Tambara norm functor $\mathcal{N}_K^H\colon \Tamb_{\compPair}^K\to \Tamb_{\compPair}^H$ is given by left Kan extension along the functor $I_K^H$.  The Tambara restriction functor $\mathcal{R}_K^H$ is defined by left Kan extension along $\rho^K_H$.
	\end{defn}
	\begin{remark}\label{rem:restrictionIsPullbackTambara}
		One can show that $I_K^H$ is the right adjoint of $\rho^H_K$ and it follows formally that, as in the case of Mackey functors, $\mathcal{R}^H_K$ is naturally isomorphic to the precomposition $(I_K^H)^*$.  It follows there is an adjunction $\mathcal{N}_K^H\dashv \mathcal{R}^H_K$.
	\end{remark}

Just as Lemma \ref{lem:normOfRepresented} computes the norms of represented Mackey functors, we can compute the norms of represented Tambara functors.  For any $H\leq G$ and $H$-set $T$, write $P_T$ for the represented Tambara functor $\Tburnside_{\compPair}^H(T,-)$. 
 
\begin{lem}\label{lem:normOfRepresentedTambara}
	For any $K\leq H$ and any $H$-set $T$ there is an isomorphism of Tambara functors $\mathcal{R}^H_K(P_T)\cong P_{\mathrm{Res}^H_K(T)}$.  Similarly, if $S$ is any $K$-set then $\mathcal{N}_K^H(P_S)\cong P_{H\times_K S}$.  
\end{lem}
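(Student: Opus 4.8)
The plan is to deduce both isomorphisms from a single formal fact: the left Kan extension of a covariant representable functor along an arbitrary functor is again representable. Precisely, for a functor $F\colon \mathcal{C}\to\mathcal{D}$ between essentially small categories and an object $c\in\mathcal{C}$, there is a natural isomorphism $\mathrm{Lan}_F\bigl(\mathcal{C}(c,-)\bigr)\cong\mathcal{D}(Fc,-)$. First I would record the (standard) proof of this: for any functor $G\colon\mathcal{D}\to\Set$, the universal property of $\mathrm{Lan}_F$ (the adjunction $\mathrm{Lan}_F\dashv F^{*}$) together with the Yoneda lemma gives natural bijections $\mathrm{Nat}\bigl(\mathrm{Lan}_F(\mathcal{C}(c,-)),G\bigr)\cong\mathrm{Nat}\bigl(\mathcal{C}(c,-),G\circ F\bigr)\cong G(Fc)\cong\mathrm{Nat}\bigl(\mathcal{D}(Fc,-),G\bigr)$, and a final application of Yoneda inside $\mathrm{Fun}(\mathcal{D},\Set)$ produces the desired isomorphism. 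This is exactly the principle already used for Mackey functors in Lemma \ref{lem:normOfRepresented}.

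Next I would apply this twice. For the norm, recall that $\mathcal{N}_K^H$ is by definition $\mathrm{Lan}_{I_K^H}$ and that $I_K^H\colon\Tburnside_{\compPair}^K\to\Tburnside_{\compPair}^H$ extends the induction functor $H\times_K(-)$, so on objects $I_K^H(S)=H\times_K S$; taking $\mathcal{C}=\Tburnside_{\compPair}^K$, $\mathcal{D}=\Tburnside_{\compPair}^H$, $F=I_K^H$ and $c=S$ yields $\mathcal{N}_K^H(P_S)\cong\Tburnside_{\compPair}^H(H\times_K S,-)=P_{H\times_K S}$. For the restriction, $\mathcal{R}_K^H=\mathrm{Lan}_{\rho_K^H}$ and $\rho_K^H\colon\Tburnside_{\compPair}^H\to\Tburnside_{\compPair}^K$ extends $\mathrm{Res}^H_K$, so $\rho_K^H(T)=\mathrm{Res}^H_K(T)$ on objects; the same fact gives $\mathcal{R}_K^H(P_T)\cong P_{\mathrm{Res}^H_K(T)}$. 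Alternatively, the restriction statement follows from Remark \ref{rem:restrictionIsPullbackTambara}, which identifies $\mathcal{R}_K^H$ with $(I_K^H)^{*}$, combined with the adjunction $\rho_K^H\dashv I_K^H$: for an $H$-set $T$ and a $K$-set $X$ one has $(I_K^H)^{*}(P_T)(X)=\Tburnside_{\compPair}^H\bigl(T,I_K^H(X)\bigr)\cong\Tburnside_{\compPair}^K\bigl(\rho_K^H(T),X\bigr)$, naturally in $X$, which is $P_{\mathrm{Res}^H_K(T)}(X)$.

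I do not expect a genuine obstacle: the entire argument is formal, essentially the Yoneda lemma. The only points to handle with a little care are that a representable functor on $\Tburnside_{\compPair}^H$ automatically preserves products (disjoint unions of $G$-sets are products in the polynomial category), so these Kan extensions really do land in $\Tamb_{\compPair}^H$ and we need not appeal to the Borceux--Day result of Remark \ref{rem:colimitCommuting} in this representable case; that the isomorphisms constructed live in the ambient functor categories and hence restrict to isomorphisms of Tambara functors; and that the variances of $I_K^H$ and $\rho_K^H$, together with their values on objects, are matched up correctly in each of the two applications.
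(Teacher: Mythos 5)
Your proposal is correct and matches the paper's (implicit) argument: the paper states this lemma without proof, relying on exactly the formal fact you prove — that left Kan extension along a functor sends the representable at $c$ to the representable at $F(c)$ — just as it does for the Mackey-functor analogue in Lemma \ref{lem:normOfRepresented}. Your side remarks (product preservation by representables, and the alternative derivation of the restriction case from Remark \ref{rem:restrictionIsPullbackTambara} via the adjunction $\rho^H_K\dashv I^H_K$) are also consistent with the paper.
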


If $i_H = i_{add}\colon \burnside_{\mathcal{O}_a}^H\to \Tburnside_{\compPair}^H$ is the inclusion functor of Remark \ref{rem:underlyingMackey}, then the forgetful functor which sends a Tambara functors to its underlying additive Mackey functor is $U_H=i_H^*$.   The main result of this section says that the Tambara norm functors ``commute'' with the forgetful functors $U_H$ in  the sense that there are isomorphisms $U_H\mathcal{N}_K^H\cong N_K^HU_K$ where $N^H_K$ is the Mackey norm functor.  Before proving this result, we quickly prove the analogous result for the Tambara restriction functors.

\begin{lem}
	For any pair $K\leq H$ of subgroups of $G$, there is a natural isomorphism of restriction functors $U_K\mathcal{R}^H_K\cong R^H_KU_H$.  
\end{lem}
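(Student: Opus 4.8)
The plan is to reduce the statement to a strict commutativity of functors between (bi)span categories. First I would trade the left-Kan-extension definitions of the two restriction functors for precomposition: by Remark~\ref{rem:restrictionIsPullbackTambara} the functor $\mathcal{R}^H_K$ is naturally isomorphic to $(I_K^H)^{*}$, precomposition with the induction functor $I_K^H\colon\Tburnside^K_{\compPair}\to\Tburnside^H_{\compPair}$, and by Corollary~\ref{cor:restrictionAsLeftKanExtension} the functor $R^H_K$ is naturally isomorphic to $(I_K^H)^{*}$, precomposition with the induction functor $I_K^H\colon\burnside^K_{\mathcal{O}_a}\to\burnside^H_{\mathcal{O}_a}$. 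Since $U_H=i_H^{*}$ and $U_K=i_K^{*}$ are themselves precomposition functors, functoriality of $(-)^{*}$ then identifies, up to natural isomorphism, $U_K\mathcal{R}^H_K$ with $(I_K^H\circ i_K)^{*}$ and $R^H_KU_H$ with $(i_H\circ I_K^H)^{*}$, where the two composites in question run around the square:
\[
	\begin{tikzcd}
		\burnside^K_{\mathcal{O}_a} \ar["i_K"]{r}\ar["I_K^H"']{d} & \Tburnside^K_{\compPair}\ar["I_K^H"]{d}\\
		\burnside^H_{\mathcal{O}_a}\ar["i_H"']{r} & \Tburnside^H_{\compPair}
	\end{tikzcd}
\]
Everything thus comes down to checking that this square commutes.

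The verification of commutativity is the routine heart of the argument, and I expect it to hold on the nose. Both composites send a $K$-set $X$ to $H\times_K X$. On a morphism, which we may represent by a span $[X\xleftarrow{r}A\xrightarrow{t}Y]$ in $\burnside^K_{\mathcal{O}_a}$, the functor $i_K$ produces the bispan $[X\xleftarrow{r}A=A\xrightarrow{t}Y]$, after which $I_K^H$ applies $H\times_K(-)$ to its objects and maps, yielding $[H\times_K X\xleftarrow{H\times_K r}H\times_K A=H\times_K A\xrightarrow{H\times_K t}H\times_K Y]$; going the other way, $I_K^H\colon\burnside^K_{\mathcal{O}_a}\to\burnside^H_{\mathcal{O}_a}$ carries the span to $[H\times_K X\xleftarrow{H\times_K r}H\times_K A\xrightarrow{H\times_K t}H\times_K Y]$ and then $i_H$ inserts the identity of $H\times_K A$ in the norm slot, giving the same bispan. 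Here one uses that $H\times_K(-)$ fixes identity maps and, by Example~\ref{ex:restrictedIndexingCat}, restricts to a functor $i_K^{*}\mathcal{O}_a\to i_H^{*}\mathcal{O}_a$, so that every map appearing really does lie in the relevant indexing category; in particular no admissibility of $H/K$ is required. Applying $(-)^{*}$ to the resulting equality $i_H\circ I_K^H=I_K^H\circ i_K$ and splicing in the two natural isomorphisms above then produces the desired natural isomorphism $U_K\mathcal{R}^H_K\cong R^H_KU_H$.

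The main, and essentially the only, obstacle is bookkeeping: one must be careful about how $I_K^H$ is defined on morphisms of the polynomial categories and confirm its compatibility with the inclusions $i_{add}$ — which is transparent for the bispans actually needed here, namely those in the image of $i_K$ — and one must be willing to replace the left-Kan-extension definitions of $\mathcal{R}^H_K$ and $R^H_K$ by precomposition, which is exactly the content of the cited Remark and Corollary. None of the delicate input about coinduction, dependent products, or exponential diagrams enters at this stage; that difficulty is confined to the analogous comparison for the \emph{norm} functors, Theorem~\ref{thm:normComparison}.
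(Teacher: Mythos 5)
Your proposal is correct and follows essentially the same route as the paper's proof: identify both restriction functors with precomposition along $I^H_K$ via Remark \ref{rem:restrictionIsPullbackTambara} and Corollary \ref{cor:restrictionAsLeftKanExtension}, and then observe that the square of span/bispan categories commutes. The only difference is that you spell out the (correct, routine) verification of commutativity on morphisms, which the paper simply asserts.
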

\begin{proof}
	Consider the following commutative diagram of functors:
	\[
		\begin{tikzcd}
			\burnside^K_{\mathcal{O}_a} \ar["i_K"]{r} \ar["I^H_K"]{d} & \Tburnside^K_{\compPair} \ar["I^H_K"]{d}	\\
				\burnside^H_{\mathcal{O}_a}  \ar["i_H"]{r}& \Tburnside^H_{\compPair}	
		\end{tikzcd}
	\]
	
	Since this diagram commutes, there is a natural isomorphism of functors $(i_H\circ I^H_K)^*\cong (I^H_K\circ i_K)^*$.  The result now follows from Corollary  \ref{cor:restrictionAsLeftKanExtension} and Remark \ref{rem:restrictionIsPullbackTambara} which identify both the Mackey and Tambara restriction functors precomposition with $I^H_K$.
\end{proof}

Comparing the Tambara and Mackey restriction functors with the forgetful functors $U_K$ and $U_H$ is easy because all functors involved are precomposition functors.  Comparing the Tambara and Mackey norms requires more care because it compares precomposition functors with left Kan extensions.  The correct categorical framework in which to approach such comparisons is the calculus of mate diagrams, which we now recall.

Consider the following square of functors, inhabited by a natural transformation $\alpha$.
\begin{equation}\label{eq:BCsquare1}
	\begin{tikzcd}[row sep =large,  column sep = large]
	A \ar["f"]{r} \ar["h"]{d} & B \ar["k"]{d}\\
	C\ar["g"]{r} & D \ar[shorten =4mm,Rightarrow, from = 1-2, to = 2-1,"\alpha"']
	\end{tikzcd}
\end{equation}

For any category $\mathcal{C}$, we denote the category of functors and natural transformations from $\mathcal{C}$ to $\Set$ by $\Set^{\mathcal{C}}$.  The square \eqref{eq:BCsquare1} determines another square of functors:
\begin{equation}\label{eq:BCsquare2}
\begin{tikzcd}[row sep =large,  column sep = large]
	\Set^A  & \Set^B \ar["f^*"']{l}\\
	\Set^C\ar["h^*"']{u} & \Set^D \ar["k^*"']{u} \ar["g^*"']{l} \ar[shorten =4mm, Rightarrow, from = 1-2, to = 2-1,"\alpha^*"']
\end{tikzcd}
\end{equation}

Because all $\Set$ valued functors admit left Kan extensions, the functors $h^*$ and $k^*$ in \eqref{eq:BCsquare2} admit left adjoints we denote by $h_!$ and $k_!$ respectively.  We denote the units and counits of these adjunctions by $\eta_h$, $\eta_k$, $\epsilon_h$ and $\epsilon_k$ respectively.  Using the units and counits, we can define a natural transformation $\beta\colon h_!f^*\Rightarrow g^*k_!$ as the composite
\[
	h_!f^*\xRightarrow{h_!f^*\cdot \eta_k} h_!f^*k^{*}k_! \xRightarrow{h_!\cdot \alpha^*\cdot k_!} h_!h^*g^*k_!\xRightarrow{\epsilon_h\cdot g^*k_!} g^*k_!
\]
which fills the square
\begin{equation}\label{eq:BCsquare4}
	\begin{tikzcd}[row sep =large,  column sep = large]
	\Set^A  \ar["h_!"]{d}& \Set^B \ar["k_!"]{d}\ar["f^*"']{l}\\
	\Set^C & \Set^D  \ar["g^*"']{l} \ar[shorten =4mm,Rightarrow,from =1-1,to=2-2,"\beta"]
	\end{tikzcd}
\end{equation}

\begin{defn}
	The square \eqref{eq:BCsquare4} is called the \emph{mate} of the square \eqref{eq:BCsquare2}.  We say the square \eqref{eq:BCsquare1} is \emph{exact}, or satisfies the \emph{Beck--Chevalley condition}, if $\beta$ is a natural isomorphism.
\end{defn}

To place our work in the framework of mates, we construct a natural transformation $\alpha\colon i_HC^H_K\Rightarrow I^H_Ki_K$ so that the square
\begin{equation}\label{eq:squareIsBC}
\begin{tikzcd}[row sep =large,  column sep = large]
	\burnside_{\mathcal{O}_a}^K  \ar["C^H_K"]{d} \ar["i_K"]{r}& \Tburnside^K_{\compPair} \ar["I^H_K"]{d}\\
	\burnside_{\mathcal{O}_a}^H \ar["i_H"]{r}& \Tburnside^H_{\compPair}  \ar[shorten =4mm, Rightarrow, from = 1-2, to = 2-1,"\alpha"']
\end{tikzcd}
\end{equation}
is exact.  For a $K$-set $T$, the component $\alpha_T\colon I^H_Ki_K(T)\to i_HC^H_K(T)$ is represented by the bispan
\[
	H\times_{K}T\xleftarrow{H\times_{K}(\epsilon^C_{T})} H\times_K\mathrm{Res}^H_K\map_{K}(H,T) \xrightarrow{\epsilon^I_{\map_K(H,T)}}  \map_{K}(H,T) \xrightarrow{=} \map_{K}(H,T)
\]
where $\epsilon^C$ and $\epsilon^I$ are the counits of the coinduction-restriction and induction restriction adjunctions respectively. This is indeed a natural transformation, although the proof is rather involved and we defer it to Section \ref{section:technicalProofs}.

\begin{lem}\label{lem:alphaIsNatural}
	The maps $\alpha_T\colon I^H_Ki_K(T)\to i_HC^H_K(T)$ give a natural transformation $\alpha\colon i_HC^H_K\Rightarrow I^H_Ki_K$.
\end{lem}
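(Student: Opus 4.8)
The plan is to verify the naturality square for $\alpha$ on a set of morphisms generating $\burnside^K_{\mathcal{O}_a}$ under composition. Concretely, for each morphism $\phi\colon T\to T'$ of $\burnside^K_{\mathcal{O}_a}$ one must show
$\alpha_{T'}\circ I^H_Ki_K(\phi)=i_HC^H_K(\phi)\circ\alpha_T$,
and since naturality squares paste along composites, the span decomposition \eqref{spanDecomposition} lets us assume $\phi$ is a restriction $R_f$ (for an arbitrary map $f$ of finite $K$-sets) or a transfer $T_g$ (for $g$ in $i_K^*\mathcal{O}_a$). Using that $R_{(-)}$ is contravariantly and $T_{(-)}$ covariantly functorial — and that every functor in sight carries isomorphisms to isomorphisms, so naturality along isomorphisms is automatic — one may further restrict $f$ and $g$ to the standard generating maps (fold maps, summand inclusions, conjugations, canonical quotients $K/L'\to K/L$, and the empty map), keeping among the transfers only those generators that lie in $i_K^*\mathcal{O}_a$.

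First I would record how the four functors act on these generators. For a map $f$ of finite $K$-sets, $i_K(R_f)=R_f$ and $i_K(T_g)=T_g$; the functor $I^H_K$ extends induction, so $I^H_K(R_f)=R_{H\times_K f}$ and $I^H_K(T_g)=T_{H\times_K g}$, with $H\times_K g\in i_H^*\mathcal{O}_a$ by Example \ref{ex:restrictedIndexingCat}; and $C^H_K$ extends coinduction, which preserves pullbacks, being a right adjoint, so $C^H_K(R_f)=R_{\map_K(H,f)}$ and $C^H_K(T_g)=T_{\map_K(H,g)}$, with $\map_K(H,g)\in i_H^*\mathcal{O}_a$ by Corollary \ref{cor:firstConjecture}. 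Unwinding $\alpha_T=N_{\epsilon^I_{\map_K(H,T)}}\circ R_{H\times_K\epsilon^C_T}$, each composite around the square becomes an explicit string of restrictions, norms, and transfers in $\Tburnside^H_{\compPair}$, and the problem reduces to identities among such strings. For a restriction the identity to prove is $R_{\map_K(H,f)}\circ\alpha_T=\alpha_S\circ R_{H\times_K f}$: pushing the outer restriction past the norm of $\alpha_T$ via the interchange law $R_aN_b=N_{b'}R_{a'}$ of Lemma \ref{tambaraDoubleCosetFormula}, one identifies the resulting pullback with $H\times_K\mathrm{Res}^H_K\map_K(H,S)$ using naturality of the counit $\epsilon^I$ of $H\times_K(-)\dashv\mathrm{Res}^H_K$, after which the two remaining restriction legs agree by naturality of the counit $\epsilon^C$ of $\mathrm{Res}^H_K\dashv\map_K(H,-)$.

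The transfer case is the crux, and I expect it to be the main obstacle. Normalizing the composite $N_{\epsilon^I}\,R_{H\times_K\epsilon^C_{T'}}\,T_{H\times_K g}$ into the standard form $T\,N\,R$ forces one first through the $R$–$T$ interchange of Lemma \ref{tambaraDoubleCosetFormula} and then through the $N$–$T$ interchange governed by the exponential diagrams, and the output must be matched — object by object and map by map — with the already-normalized bispan $T_{\map_K(H,g)}\,N_{\epsilon^I}\,R_{H\times_K\epsilon^C_S}$ coming from the other side. The hardest single point is verifying that the pullback squares and exponential diagrams produced along the way are the \emph{expected} ones; this is a Beck--Chevalley-type compatibility between the adjunctions $H\times_K(-)\dashv\mathrm{Res}^H_K$ and $\mathrm{Res}^H_K\dashv\map_K(H,-)$ at the level of finite $G$-sets — in effect, that induction, restriction, and coinduction interact exactly as the exponential diagrams dictate. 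I would settle it by a direct computation of the orbits of the $H$-sets involved, organizing the bookkeeping with the double coset formula for $\mathrm{Res}^H_K\map_K(H,-)$ (Lemma \ref{lem:coinductionDoubleCosetFormula}); because that computation is lengthy, it is natural to defer the complete argument to Section \ref{section:technicalProofs}.
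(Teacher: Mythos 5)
Your overall strategy is the paper's own, reorganized: the paper verifies the naturality square for a general span $[T\xleftarrow{f}A\xrightarrow{g}T']$ in one pass, but its computation splits exactly into your two cases, and your restriction case is correct and complete. (Pushing $R_{C(f)}$ past $N_{\epsilon^I}$ via Lemma \ref{tambaraDoubleCosetFormula}, identifying the pullback with $I\rho C(S)$ because $\epsilon^I$ is, up to isomorphism, the projection $H/K\times(-)\to(-)$, and finishing with naturality of $\epsilon^C$ is precisely what the squares (3) and (4) of the paper's diagram \eqref{eq:composite2} accomplish.) The further reduction to fold maps, inclusions and canonical quotients is unnecessary, and you never actually use it.

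The gap is the transfer case, which you rightly identify as the crux but do not prove. After the $R$--$T$ interchange, one must compute the exponential diagram for $IP\xrightarrow{I(u)}I\rho C(T')\xrightarrow{\epsilon^I}C(T')$, i.e.\ the dependent product $(\epsilon^I)_*(IP)$, and show that it may be taken to be $C(A)$ with structure map $C(g)$, \emph{and} that the resulting restriction leg composes with $I(v)$ to give exactly $I(\epsilon^C_A)$ --- only then do the two normalized bispans coincide on the nose. The paper obtains this from Hoyer's Lemma 2.3.5, which says that under the slice equivalence $\Set^H_{/I\rho Y}\simeq\Set^K_{/\rho Y}$ the functor $(\epsilon^I)^*$ corresponds to restriction, so $(\epsilon^I)_*$ corresponds to coinduction; one then uses the triangle identity $C(\epsilon^C)\circ\eta^C=\id$ to arrange the dependent product to be $C(A)$ itself, and a short separate check gives $v\circ\widehat{\gamma}=\epsilon^C_A$ (this is the content of Lemma \ref{lem:exponentialLemma}). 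Your proposed substitute --- a direct orbit count organized by Lemma \ref{lem:coinductionDoubleCosetFormula} --- is not carried out, and that lemma is not the relevant tool: it computes the composite $\rho\circ C$ (restriction after coinduction), whereas what is needed here is an identification of the dependent product along the counit $\epsilon^I$, which is a statement about slice categories rather than a double coset decomposition. As written, the matching "object by object and map by map" that your plan requires is asserted but not established.
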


Equipped with the natural transformation $\alpha\colon I^H_Ki_K\Rightarrow i_HC^H_K$, we obtain the mate transformation $\beta\colon (C^H_K)_!I_K^*\Rightarrow i_H^*(I^H_K)_!$ and the main result of section is that $\beta$ is a natural isomorphism.  

For any bi-incomplete Tambara functor $S\colon \Tburnside^K_{\compPair}\to \Set$ and $H$-set $Y$ the pointwise Kan extension formula allows us to write the elements of $(C^H_K)_!U_K(S)(Y)$ as equivalence classes of pairs $(\omega\colon C^H_KB\to Y,x\in R^H_K(B))$ where $\omega$ is a morphism in $\burnside^H_{\mathcal{O}_a}$.  The equivalence classes are generated by the relations
\[
	(\omega\circ C^H_K(\omega'),x)\sim (\omega,S(i_K(\omega'))(x))
\] 

for any maps $\omega'$ in $\burnside^K_{\mathcal{O}_a}$.  The component $\beta_{S,Y}\colon (C^H_K)_!i_K^*(S)(Y)\to i_H^*(I^H_K)_!(S)(Y)$  of $\beta$ sends the class represented by $(\omega\colon C^H_KB\to Y,x)$ to the class of $(i_H(\omega)\circ \alpha_{B}\colon I^H_KB\to Y,x)$.

\begin{thm}[cf. Hoyer \cite{Hoyer}, Theorem 2.3.3]\label{thm:normComparison}
	For any $\mathcal{O}_m$-admissible $H/K$, the square \ref{eq:squareIsBC} is exact.  That is, the natural transformation $\beta$ is a natural isomorphism of functors $U_H\mathcal{N}_K^H\cong N_K^HU_K$.
\end{thm}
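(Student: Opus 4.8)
The plan is to show that the mate $\beta$ is a natural isomorphism by checking it on a generating family of objects and propagating along colimits. Both functors $(C^H_K)_!i_K^{*}$ and $i_H^{*}(I^H_K)_!$ preserve colimits computed in the presheaf categories: a pullback functor such as $i_K^{*}$ or $i_H^{*}$ preserves all colimits since colimits of presheaves are computed objectwise, and $(C^H_K)_!$ and $(I^H_K)_!$ preserve colimits as left adjoints (Remark \ref{rem:colimitCommuting}). Since $\mathrm{Fun}(\Tburnside^K_{\compPair},\Set)$ is generated under colimits by the representables $P_S=\Tburnside^K_{\compPair}(S,-)$ with $S$ a finite $K$-set, and $\beta$ is natural, it suffices to prove $\beta_{P_S}$ is an isomorphism for every $K$-set $S$. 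Evaluating on a Tambara functor afterwards gives the asserted isomorphism $U_H\mathcal{N}^H_K\cong N^H_KU_K$, since $\mathcal{N}^H_K$ and $N^H_K$ are the Kan extensions $(I^H_K)_!$ and $(C^H_K)_!$ and these send product-preserving functors to product-preserving functors (the Borceux--Day argument of Remark \ref{rem:colimitCommuting}).

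Unwinding the two sides on $P_S$: by Lemma \ref{lem:normOfRepresentedTambara}, $(I^H_K)_!P_S=\mathcal{N}^H_K(P_S)\cong P_{H\times_K S}$, so the target of $\beta_{P_S,Y}$ at an $H$-set $Y$ is $\Tburnside^H_{\compPair}(H\times_K S,\,i_H Y)$. The source, by the pointwise left Kan extension formula, is the set of equivalence classes of triples $(B,\ \omega\colon C^H_K B\to Y\text{ in }\burnside^H_{\mathcal{O}_a},\ \theta\in\Tburnside^K_{\compPair}(S,B))$, modulo the relations $(B,\omega\circ C^H_K(\phi),\theta)\sim(B',\omega',i_K(\phi)\circ\theta)$ induced by the morphisms $\phi$ of $\burnside^K_{\mathcal{O}_a}$; and by the explicit description of $\beta$ preceding the theorem, $\beta_{P_S,Y}$ sends the class of such a triple to the composite $i_H(\omega)\circ\alpha_B\circ I^H_K(\theta)$. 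Thus everything reduces to the following two assertions, for all finite $K$-sets $S$ and finite $H$-sets $Y$: (i) every bispan $H\times_K S\to i_H Y$ in $\Tburnside^H_{\compPair}$ can be written in the form $i_H(\omega)\circ\alpha_B\circ I^H_K(\theta)$; and (ii) two triples yielding the same bispan are connected by a chain of the above relations.

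Assertion (i), the existence of this normal form, is where I expect the real work to lie. Given a bispan $\psi=[\,H\times_K S\xleftarrow{r}A\xrightarrow{n}B'\xrightarrow{t}Y\,]$ with $n\in i_H^{*}\mathcal{O}_m$ and $t\in i_H^{*}\mathcal{O}_a$, the first step is to use the essential-sieve property of $I^H_K$ (Proposition 2.16 of \cite{BlumbergHillIncomplete}) to write $A\cong H\times_K A'$ and $r=I^H_K(r')$ for a $K$-map $r'$, absorbing the restriction leg into an $I^H_K(-)$-factor. The norm leg $n$ is then adjunct across $\mathrm{Res}^H_K\dashv H\times_K(-)$ to a $K$-map, which, because the induction counit and the relevant units are coproducts of fold maps and monomorphisms, lies in the appropriate restricted indexing category, letting one split $N_n$ as $I^H_K$ of a $K$-norm followed by a norm along a fold-type map. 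It remains to compare this residual norm and the transfer leg $t$ with the universal counit $\alpha_B$ — whose norm leg is the induction counit into a coinduced set and whose restriction leg is $H\times_K\epsilon^C_B$ — which is carried out with the interchange laws of Lemma \ref{tambaraDoubleCosetFormula} and the exponential-diagram rule for $N$ against $T$, together with the restriction--coinduction adjunction. This is the portion of the argument adapted from the proof of Theorem 2.3.3 of \cite{Hoyer}, and it is here that the small oversight in \emph{loc.\ cit.} concerning which leg of $\psi$ is pulled back through $I^H_K$ before the coinduction counit is extracted must be corrected.

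Finally, assertion (ii) is handled by a uniqueness analysis: given two triples $(B_1,\omega_1,\theta_1)$ and $(B_2,\omega_2,\theta_2)$ with $i_H(\omega_1)\circ\alpha_{B_1}\circ I^H_K(\theta_1)=i_H(\omega_2)\circ\alpha_{B_2}\circ I^H_K(\theta_2)$, one forms a common refinement $B_3$ in $\burnside^K_{\mathcal{O}_a}$ — built from pullbacks of the underlying $K$-sets, the essential compatibility over $Y$ coming from the rigidity part of the factorization in (i) — together with morphisms $\phi_j\colon B_3\to B_j$ witnessing that both triples represent the same class. Combining (i) and (ii), $\beta_{P_S,Y}$ is a bijection for all $S$ and $Y$, hence $\beta$ is a natural isomorphism, and restricting to Tambara functors proves $U_H\mathcal{N}^H_K\cong N^H_KU_K$.
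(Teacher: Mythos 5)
Your reduction to representables and the identification of the two sides of $\beta_{P_S,Y}$ are correct, and your assertion (i) is essentially Hoyer's Lemma 2.3.4 (the normal form $T_h N_{\epsilon^I}$ for bispans out of an induced set), which the paper simply cites. But the proposal has a genuine gap at assertion (ii): you reduce the theorem to showing that any two triples $(B_j,\omega_j,\theta_j)$ with the same image are connected by a zig-zag of the coend relations, and then only gesture at this ("one forms a common refinement $B_3$\dots the essential compatibility over $Y$ coming from the rigidity part of the factorization in (i)"). No such "rigidity part" has been established, the relations of a pointwise left Kan extension are generated by single morphisms so a genuine zig-zag analysis is required, and this direct injectivity argument is exactly the delicate point where Hoyer's original proof has its oversight. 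As written, the two assertions carrying all the content of the theorem are sketches, so the proof is not complete.

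The paper's proof is structured precisely to avoid your assertion (ii). Since $\beta$ is the mate of the honest natural transformation $\alpha$, it is well defined for free; one then produces an explicit candidate section $s$ of $\beta$ using the normal form from Hoyer's Lemma 2.3.4, checks $\beta\circ s=\mathrm{id}$ on normal forms, invokes Hoyer's argument that $s$ is well defined, and finally shows by a short computation (factoring the restriction leg through the unit $\eta^C$ and absorbing the adjunct into the coend relation) that $s$ is \emph{surjective}. A well-defined surjection $s$ with $\beta\circ s=\mathrm{id}$ forces both $s$ and $\beta$ to be bijections, with no injectivity analysis of $\beta$ needed. If you want to complete your argument, either carry out the common-refinement analysis in full (hard), or restructure along these lines: prove your normal form (i), then show every element of the \emph{source} of $\beta_{P_S,Y}$ is equivalent to one in normal form, and observe that $\beta$ is injective on normal forms because it admits the section $s$.
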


\begin{remark}
	In proving a version of Theorem \ref{thm:normComparison}, Hoyer defines a map, which in our notation is $\beta^{-1}$, and shows it is well defined.  It appears that Hoyer's proof does not show that the inverse of his map, which we call $\beta$, is well defined.  The advantage of first defining the natural transformation $\alpha$ is that the well-definedness of $\beta$ is immediate from the fact that it is the mate of the natural transformation $\alpha$.
\end{remark}

\begin{proof}
	For notational brevity, we fix the subgroups $K$ and $H$ and suppress them from the notation when clear, writing $C$ instead of $C^H_K$, $\rho$ instead of $\rho^H_K$ and so on.  Fixing a bi-incomplete $K$-Tambara functor $S$ and an $H$-set $Y$, we need to show the components $\beta_{S,Y}\colon C_!i_K^*(S)(Y)\to i_H^*I_!(S)(Y)$ are bijections.  To show surjectivity, note that by Lemma 2.3.4 of \cite{Hoyer}, and the discussion that follows, an arbitrary element in the codomain of $\beta_{S,Y}$ is a class represented by a pair $(\omega,x)$ where $\omega$ is a bispan of the form
	\[
		\omega =[I\rho B \xleftarrow{=} I\rho B \xrightarrow{\epsilon^I} B \xrightarrow{h}Y]
	\]
	  
	One can check directly $\beta_{S,Y}([C\rho B\xleftarrow{\eta^C} B\xrightarrow{h}Y,x]) = [\omega,x]$ and thus our map is surjective.  Since the element $[\omega,x]$ is arbitrary, the assignment 
	\begin{equation}\label{eq:sectionOfBeta}
		[\omega,x]\mapsto [C\rho B\xleftarrow{\eta^C} B\xrightarrow{h}Y,x]
	\end{equation}
	defines a section of $\beta$ which Hoyer shows is well defined. 
	
	It remains to show our section \eqref{eq:sectionOfBeta} is surjective.  To see this, suppose we are given an arbitrary element $[\sigma,x]\in (C^H_K)_!i_K^*(S)(Y)$ where $\sigma$ is represented by the span $CX\xleftarrow{r} A\xrightarrow{t} Y$ and $x\in S(X)$.  We can factor the map $r$ as the composite $A \xrightarrow{\eta^C}C\rho(A) \xrightarrow{C(\hat{r})} CX$ where $\widehat{r}$ is the adjunct of $r$.  Using the defining relation of the pointwise Kan extension formula, we see that $[\sigma,x] = [\sigma', S(\hat{r})(x)]$ where $\sigma'$ is represented by the span $C\rho A\xleftarrow{\eta^C} A\xrightarrow{t} Y$.  since $[\sigma', S(\hat{r})(x)]$ is in the image of \ref{eq:sectionOfBeta}, we are done.
\end{proof}

\section{Bi-incomplete Tambara Functors are $\mathcal{O}_m$-Commutative Monoids}\label{section:MainTheorem}

In this section we prove the $\mathcal{O}_m$-commutative monoids in $\mathcal{O}_a$-Mackey functors are exactly the $\compPair$-Tambara functors. We begin by showing that for any $G$-Tambara functor $S$, the underlying Mackey functor $U_G(S)$ is always an $\mathcal{O}_m$-commutative monoid by constructing norm multiplications 
\[
	\mu^H_K\colon N^H_KR^G_KU_G(S)\to R^G_HU_G(S)
\]
for all pairs $K\leq H$ of subgroups of $G$.  The construction of the $\mu^H_K$ rely heavily on our work from Section \ref{section:TambaraNorms} comparing the norm and restriction functors with the forgetful functor $U_G$.

After establishing that Tambara functors give $\mathcal{O}_m$-commutative monoids we turn our attention to the inverse construction of constructing a Tambara functor from an  $\mathcal{O}_m$-commutative monoid $M$.  The essential step is to use the norm multiplications to build operations $\nu_K^H\colon M(G/K)\to M(G/H)$ which we call the \emph{external norms} of $M$.  We show that if $M$ is the underlying monoid of a Tambara functor then the external norms agree with the usual internal norms of the Tambara functor.  Now only does this show that two constructions are mutually inverse, but is also implies that the external norms are compatible with the transfers and restrictions in exactly the same way as the norms of a Tambara functor, allowing us to conclude that every $\mathcal{O}_m$-commutative monoid equipped withe external norms is a Tambara functor.

	\begin{proposition}\label{prop:tambaraAreMonoid}
		For any Tambara functor $S\in \Tamb_{\compPair}^G$, the Mackey functor $U_G(S)$ is an $\mathcal{O}_m$-commutative monoid.  Moreover, this gives a functor 
		\begin{equation}\label{eq:functor1}
			U_G\colon \Tamb^G_{\compPair}\to \mathrm{Comm}_{\mathcal{O}_m}(\Mack_{\mathcal{O}_a}).
		\end{equation}
	\end{proposition}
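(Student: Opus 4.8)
The plan is to transport the bi-incomplete Tambara structure of $S$ through the comparison results of Section~\ref{section:TambaraNorms}. First, the underlying Mackey functor $U_G(S)$ is a commutative Green functor: the box-product multiplication on $\Mack^G_{\mathcal{O}_a}$ is computed from a bispan whose transfer slot is a fold map, which lies in every indexing category, so the ring structures of $S$ descend to give $U_G(S)$ the structure of a monoid in $\mathcal{C}(G)=\Mack^G_{\mathcal{O}_a}$ in the sense of Definition~\ref{defn:coherence}. It remains to produce the norm multiplications and verify coherences (1)--(4). For $K\le H$ with $H/K$ an $\mathcal{O}_m$-admissible $H$-set, I would build $\mu^H_K$ from the counit $\varepsilon$ of the adjunction $\mathcal{N}^H_K\dashv\mathcal{R}^H_K$ of Remark~\ref{rem:restrictionIsPullbackTambara}: evaluating $\varepsilon$ at the $H$-Tambara functor $\mathcal{R}^G_H(S)$ and composing with the natural isomorphism $\mathcal{R}^H_K\mathcal{R}^G_H\cong\mathcal{R}^G_K$ gives a morphism $\mathcal{N}^H_K\mathcal{R}^G_K(S)\to\mathcal{R}^G_H(S)$ of $H$-Tambara functors; applying $U_H$ and rewriting the source and target using the isomorphism $U_H\mathcal{N}^H_K\cong N^H_K U_K$ of Theorem~\ref{thm:normComparison} together with $U_K\mathcal{R}^G_K\cong R^G_K U_G$ and $U_H\mathcal{R}^G_H\cong R^G_H U_G$ from Section~\ref{section:TambaraNorms} produces $\mu^H_K\colon N^H_K R^G_K U_G(S)\to R^G_H U_G(S)$.

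With $\mu^H_K$ in hand, coherences (2) and (3) should be essentially formal. For (2) one uses that, under the identification $\mathcal{N}^H_L\cong\mathcal{N}^H_K\mathcal{N}^K_L$, the counit of $\mathcal{N}^H_L\dashv\mathcal{R}^H_L$ factors through the counits of $\mathcal{N}^H_K\dashv\mathcal{R}^H_K$ and $\mathcal{N}^K_L\dashv\mathcal{R}^K_L$ (the standard composite-adjunction identity), then transports this along the natural comparison isomorphisms and the compositionality $N^H_K N^K_L\cong N^H_L$ of the Mackey norms. Condition (3) is obtained the same way from the conjugation isomorphisms $c_g\mathcal{N}^H_K\cong\mathcal{N}^{H^g}_{K^g}c_g$ and $c_g\mathcal{R}^H_K\cong\mathcal{R}^{H^g}_{K^g}c_g$ on polynomial categories, which are induced by the evident isomorphisms of $G$-sets, together with their compatibility with the functors $U_\bullet$. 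For the triangle identity (1), the identity $\mathcal{R}^G_H(\varepsilon_S)\circ\eta_{\mathcal{R}^G_H(S)}=\id$ for the adjunction $\mathcal{N}^G_H\dashv\mathcal{R}^G_H$, after applying $U_H$ and transporting, reduces the required equation to the claim that the canonical unit map \eqref{eq:unit} of the monoid $U_G(S)$ agrees with the image under $U_H$ of the adjunction unit $\eta_{\mathcal{R}^G_H(S)}$. I would check this first on represented Tambara functors $P_T$, where every functor in sight is given by the explicit set-level formulas of Lemmas~\ref{lem:normOfRepresentedTambara} and~\ref{lem:normOfRepresented} and by the explicit bispan defining the natural transformation $\alpha$, and then pass to general $S$ using that $R^G_H N^G_H R^G_H U_G(-)$ commutes with the colimits in the presheaf category that present $S$ (Remark~\ref{rem:colimitCommuting}).

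The main obstacle is coherence (4), compatibility with the double coset formula; this is also the delicate step in Hoyer's treatment of the complete case. The strategy is to expand both legs of the hexagon of Definition~\ref{defn:coherence}(4) using the double coset formula for the Mackey norm functors established in Section~\ref{section:biincompleteAsMonoids}, namely $R^H_K N^H_L\cong\bigotimes_{\gamma_i}N^K_{K\cap L^{\gamma_i}}R^{L^{\gamma_i}}_{K\cap L^{\gamma_i}}c_{\gamma_i}$, and its source, the coinduction identity $\mathrm{Res}^H_K\map_L(H,X)\cong\prod_{\gamma_i}\map_{K\cap L^{\gamma_i}}(K,\mathrm{Res}^{L^{\gamma_i}}_{K\cap L^{\gamma_i}}c_{\gamma_i}X)$. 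After transporting through Theorem~\ref{thm:normComparison}, the commutativity of the hexagon becomes a statement about how the Tambara counit interacts with restriction and with this factorization; since the Tambara norm and restriction functors are left Kan extensions along explicit functors of polynomial categories, that statement can be read off from the set-level double coset identity, the only real work being to match up the double coset representatives for $L\backslash G/H$ and $K\backslash G/H$ consistently, exactly as in Definition~\ref{defn:coherence}(4). Finally, functoriality of \eqref{eq:functor1} is immediate: for a morphism $f\colon S\to S'$ of $G$-Tambara functors the square defining a morphism of $\mathcal{O}_m$-commutative monoids commutes by naturality of the counits $\varepsilon$ and of the comparison isomorphisms of Theorem~\ref{thm:normComparison}.
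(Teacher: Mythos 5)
Your construction of $\mu^H_K$ — the counit of $\mathcal{N}^H_K\dashv\mathcal{R}^H_K$ evaluated at $\mathcal{R}^G_H(S)$, transported through the comparison isomorphisms of Theorem \ref{thm:normComparison} — is exactly the paper's definition, and the paper likewise disposes of the coherence conditions by direct computation with the pointwise Kan extension formula and deduces functoriality from naturality of the counit and comparison maps. Your sketch is correct and, if anything, more detailed than the paper's on coherences (1)--(4); the only slip is that the Green functor multiplication on $U_G(S)$ comes from the fold map sitting in the \emph{norm} slot of the bispan, not the transfer slot, though the conclusion is unaffected since fold maps lie in every indexing category.
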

	\begin{proof}
		For any map $h\colon G/K\to G/H$ in $\mathcal{O}_m$ we define the norm multiplication 
		\[
			\mu^H_K\colon N^H_KR^G_KU_G(S)\to R^G_H U_G(S)
		\]
		to be the unique map so that the following diagram commutes:
		\begin{equation}\label{eq:normMultForTambara}
			\begin{tikzcd}[row sep = large,column sep = huge]
				N^H_KR^G_KU_G(S) \ar["\mu^H_K"]{rr} \ar["\cong"]{d} & & R^G_HU_G(S) \ar["\cong"]{d}\\
				U_G\mathcal{N}^H_K\mathcal{R}^G_KS \ar["\cong"]{r}  &U_G\mathcal{N}^H_K\mathcal{R}^H_K\mathcal{R}^G_HS \ar["U_G\cdot \epsilon^H_K "]{r}  & U_G\mathcal{R}^G_HS
			\end{tikzcd}
		\end{equation}
		where $\epsilon^H_K \colon \mathcal{N}^H_K\mathcal{R}^H_K\mathcal{R}^G_HS\to \mathcal{R}^G_HS$ is the counit of the $\mathcal{N}^H_K\dashv \mathcal{R}^H_K$ adjunction and the vertical maps are the isomorphisms of Theorem \ref{thm:normComparison}.  The coherence data of Definition \ref{defn:coherence} is checked directly using the explicit form of the isomorphism of Theorem \ref{thm:normComparison} and the fact the counits $\epsilon^H_K$ can be explicitly computed using the pointwise Kan extension formula.
		
		To see $U_G$ is a functor we must show that for any map $F\colon S\to T$ of Tambara functors that the underlying map $f = U_G(F)\colon U_G(S)\to U_G(T)$ is a morphism of $\mathcal{O}_m$-commutative monoids.  This amounts to showing that the square
		\[
		\begin{tikzcd} [row sep =large, column sep = large]
			N^H_KR^G_KU_G(S) \ar["N^H_KR^G_Kf"]{r} \ar["\mu^H_K"]{d} & N^H_KR^G_KU_G(T) \ar["\mu^H_K"]{d}\\
			R^G_HU_G(S)  \ar["R^G_Hf"]{r} & R^G_HU_G(T) 
		\end{tikzcd}
		\]
		commutes for any choices of $K\leq H$ but this is immediate the every map in the diagram \eqref{eq:normMultForTambara} is natural.
	\end{proof}

	The remainder of this section is devoted to constructing an inverse to the functor $U_G$.  Since every $\mathcal{O}_m$-commutative monoid has the structure of a Green functor, and Tambara functors are essentially Green functors with norm maps, it suffices to construct functorial norm operations on every $\mathcal{O}_m$-commutative monoid.  We call the operations we construct \emph{external norms}.
	
	Let $M$ be an $\mathcal{O}_m$-commutative monoid in $\mathcal{O}_a$-Mackey functors.  Any element $x\in M(G/K)$ determines a map $\hat{x}\colon A_{K/K}\to R^G_KM$. If $H/K$ is $\mathcal{O}_m$-admissible, we can form the following composite
	\begin{equation}\label{eq:externalNorm}
	A_{H/H}\cong N^H_KA_{K/K}\xrightarrow{N^H_K(\hat{x})} N^H_KR^G_KM\xrightarrow{\mu} R^G_HM.
	\end{equation}
	where $\mu\colon N^H_KR^G_KM\to R^G_HM$ is the norm multiplication.  By the Yoneda lemma, the map \eqref{eq:externalNorm} corresponds uniquely to an element $\nu_K^H(x)\in R^G_HM(H/H) \cong M(G/H)$ which we call the \emph{external norm} of $x$.  When $M=U_G(S)$ comes from a  Tambara functor, the external norms recover the usual internal norms. 
	
	\begin{proposition}\label{prop:externalNormIsInternalNorm}
		For any Tambara functor $S$ and element $x\in S(G/K)$ the external norm $\nu_K^H(x)$ is equal to the internal norm $N_K^H(x)$.
	\end{proposition}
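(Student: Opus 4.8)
The strategy is a direct unwinding. By the Yoneda lemma, $\nu_K^H(x)$ is the image of the universal element $\mathrm{id}_{H/H}\in A_{H/H}(H/H)$ under the $H/H$-component of the composite \eqref{eq:externalNorm}, where the norm multiplication $\mu^H_K$ is spelled out by \eqref{eq:normMultForTambara}; so it suffices to chase $\mathrm{id}_{H/H}$ through each arrow, using the pointwise left Kan extension formula together with the explicit descriptions available for every map in sight. (One could first observe that $x\mapsto\nu_K^H(x)$ and $x\mapsto N_K^H(x)$ are both natural transformations $\mathrm{ev}_{G/K}\Rightarrow\mathrm{ev}_{G/H}$ of functors $\Tamb^G_{\compPair}\to\Set$ --- the first by Proposition \ref{prop:tambaraAreMonoid}, the second from the definition of a morphism of Tambara functors --- and hence by co-Yoneda reduce to the universal pair $S=P_{G/K}$, $x=u_{G/K}=\mathrm{id}_{G/K}\in P_{G/K}(G/K)$; but the chase below goes through verbatim for arbitrary $S$.)

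First I would handle the left half of \eqref{eq:externalNorm}. Since coinduction preserves terminal objects we have $C^H_K(K/K)=\map_K(H,K/K)=H/H$, so Lemma \ref{lem:normOfRepresented} gives $N^H_K A_{K/K}\cong A_{H/H}$, and under this isomorphism $\mathrm{id}_{H/H}$ is represented in $\bigl((C^H_K)_!A_{K/K}\bigr)(H/H)$ by the triple $(K/K,\ \mathrm{id}_{H/H}\colon C^H_K(K/K)\to H/H,\ \mathrm{id}_{K/K})$. Applying $N^H_K(\hat x)=(C^H_K)_!(\hat x)$ and using $\hat x_{K/K}(\mathrm{id}_{K/K})=x\in(R^G_K U_G S)(K/K)=S(G/K)$ turns this into the class of $(K/K,\ \mathrm{id}_{H/H},\ x)$. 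Now I apply the isomorphism $\beta$ of Theorem \ref{thm:normComparison}; by its explicit formula the class becomes that of $(i_H(\mathrm{id}_{H/H})\circ\alpha_{K/K},\ x)$ in $\bigl(U_H\mathcal{N}^H_K\mathcal{R}^G_K S\bigr)(H/H)$. The crux is the computation of $\alpha_{K/K}$: since $I^H_K(K/K)=H/K$ and $C^H_K(K/K)=H/H$, plugging $T=K/K$ into the bispan defining $\alpha_T$ makes the restriction leg $H\times_K\epsilon^C_{K/K}$ and the final (transfer) leg identities, while the induction counit $\epsilon^I_{H/H}$ becomes the canonical projection $q\colon H/K\to H/H$; hence $\alpha_{K/K}=[H/K= H/K\xrightarrow{q}H/H=H/H]=N_q$, and our class is that of $(K/K,\ N_q\colon I^H_K(K/K)\to H/H,\ x)$ in $\bigl(\mathcal{N}^H_K\mathcal{R}^G_K S\bigr)(H/H)$.

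For the right half, the canonical isomorphism $\mathcal{R}^G_K\cong\mathcal{R}^H_K\mathcal{R}^G_H$ (left Kan extension along $\rho^G_K=\rho^H_K\circ\rho^G_H$) merely reindexes the class, and then the counit $\epsilon^H_K$ of $\mathcal{N}^H_K\dashv\mathcal{R}^H_K$, computed pointwise, sends $(K/K,\ N_q,\ x)$ to $(\mathcal{R}^G_H S)(N_q)(x)\in(\mathcal{R}^G_H S)(H/H)=S(G/H)$. Since $\mathcal{R}^G_H S$ is (naturally isomorphic to) $S\circ I^G_H$ by Remark \ref{rem:restrictionIsPullbackTambara}, and $I^G_H$, being the extension of the induction functor, carries $N_q$ to the norm bispan $N_{G\times_H q}=N_f$ where $f\colon G/K\to G/H$ is the canonical quotient, we get $(\mathcal{R}^G_H S)(N_q)(x)=S(N_f)(x)=N_K^H(x)$. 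The final isomorphism of \eqref{eq:normMultForTambara} is the identity on $H/H$-sections, so $\nu_K^H(x)=N_K^H(x)$, as claimed.

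I expect the main obstacle to be organizational rather than conceptual: keeping straight the three adjunctions in play (induction--restriction, restriction--coinduction, and $\mathcal{N}^H_K\dashv\mathcal{R}^H_K$) together with the Yoneda and pointwise-Kan identifications, and in particular pinning down the universal-element descriptions of the isomorphisms of Lemmas \ref{lem:normOfRepresented} and \ref{lem:normOfRepresentedTambara} and verifying the single crucial identification $\alpha_{K/K}=N_q$. Once these are in hand the rest of the chase is forced.
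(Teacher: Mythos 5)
Your proposal is correct and follows essentially the same route as the paper: both proofs chase $\id_{H/H}$ through the composite \eqref{eq:externalNorm} via the Yoneda lemma and the pointwise Kan extension formula, with the key step being the identification of $\alpha_{K/K}$ (equivalently, of the bispan $[I\rho(H/H)=I\rho(H/H)\xrightarrow{\epsilon^I}H/H=H/H]$) with the norm $N_q$ along the canonical quotient $H/K\to H/H$. The only difference is presentational: you make the intermediate representatives and the role of $\beta$ slightly more explicit, and note the optional naturality reduction to the universal case, which the paper does not use.
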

	
	\begin{proof}
		Using the definition of the norm multiplications on a Tambara functor, the composite \eqref{eq:externalNorm} defining $\nu^H_K(x)$ becomes 
		\begin{equation}\label{eq:externalNormTambara}
			A_{H/H}\cong N_K^HA_{K/K} \xrightarrow{N_K^H(\hat{x})} N_K^HU_K\mathcal{R}^G_KS \xrightarrow{\beta\cdot\mathcal{R}^G_K} U_H\mathcal{N^H_K}\mathcal{R}^G_KS \xrightarrow{U_H\cdot \epsilon_{R^G_HS}} U_H\mathcal{R}^G_HS
		\end{equation}
		where $\epsilon_{R^G_HS}\colon \mathcal{N}^H_K\mathcal{R}^G_KS\to \mathcal{R}^G_HS$ is the counit of the $\mathcal{N}^H_K\dashv \mathcal{R}^H_K$ adjunction.
		
		The external norm $\nu_K^H(x)$ is equal to the image of the element $\id_{H/H}\in A_{H/H}(H/H)$ under the map \eqref{eq:externalNormTambara}.  Evaluating all the functors in \eqref{eq:externalNormTambara} at the object $H/H$, and using the pointwise Kan extension formula, we can can compute where $\id_{H/H}$ is sent at every step in the composition:
		\begin{align*}
		\id_{H/H} & \mapsto [C(K/K)\xleftarrow{\cong} H/H \xrightarrow{=}H/H,\id_{K/K}\in A_{K/K}(K/K)] \\
		& \mapsto [C(K/K)\xleftarrow{\cong} H/H \xrightarrow{=}H/H,x\in U_K\mathcal{R}^G_KS(K/K)] \\
		& \mapsto [I\rho (H/H) \xleftarrow{=} I\rho (H/H) \xrightarrow{\epsilon^I_{H/H}} H/H\xrightarrow{=}H/H, x\in \mathcal{R}^G_KS(K/K)]\\
		& \mapsto \mathcal{R}^G_H(S)(N_{\epsilon^I_{H/H}})(x).
		\end{align*}
		
		In the last line, we use the Yoneda lemma to identify $\mathcal{R}^G_KS(K/K)$ with $\mathcal{R}^G_HS(H/K)$ and call the element $x$ the same thing in both sets.  Under the identification $\mathcal{R}^G_H(S)(H/H)\leftrightarrow S(G/H)$, this element corresponds to $S(N_{I_H^G(\epsilon^I_{H/H})})(x)$.  The object $I\rho(H/H) = H\times_K(\mathrm{Res}^H_K(H/H))\cong H/K$ and, under this isomorphism, the map $\epsilon_{H/H}^I\colon H/K\to H/H$ is the canonical quotient and so we have $\nu_K^H(x) = S(N_{G/K\to G/H})(x) =N_K^H(x)$.
	\end{proof}
	
	We can define an external norm $\nu_p\colon M(S)\to M(T)$ for any map $p\colon S\to T$ in $\Set^G$.  First, if $p\colon G/K\to G/H$ is the canonical quotient we define $\nu_p=\nu_K^H$.  Any other map $p$ in $\Set^G$ is isomorphic to a disjoint union of canonical quotients
	\[
	p\colon \coprod\limits_{i=1}^n G/K_i\to G/H
	\]
	for some subgroups $K_i\leq H$.  For such $p$, we define $\nu_p$ to be the composition
	\[
	\prod\limits_{i=1}^n M(G/K_i)\xrightarrow{\prod\nu_{K_i}^{H}} \prod \limits_{i=1}^n M(G/H) \xrightarrow{\mu} M(G/H)
	\]
	where $\mu$ is the multiplication map that exists because $M$ is a Green functor. 

\begin{proposition}\label{prop:monoidsAreTambara}
	Suppose that $M\in \Mack_{\mathcal{O}_a}^G$ is an $\mathcal{O}_m$-commutative monoid.  Then there exists a $\compPair$-Tambara functor $S\colon \Tburnside^G_{\compPair}\to \Set$ whose underlying Green functor is $M$.  The internal norms of $S$ are given by the external norms $\nu^H_K$.
\end{proposition}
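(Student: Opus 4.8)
The plan is to build the functor $S\colon\bitburnside\to\Set$ directly out of $M$ and then check that it respects the composition laws of $\bitburnside$, with the external norms playing the role of the internal norms. On objects set $S(X):=M(X)$; since $M$ is an $\mathcal{O}_m$-commutative monoid its underlying Green functor makes each $M(X)$ a commutative ring. On the three generating families of morphisms of $\bitburnside$, let the restriction $R_f$ and the transfer $T_f$ (for $f$ a map in $\mathcal{O}_a$) act by the corresponding structure maps of $M$, and let the norm $N_n$ (for $n$ a map in $\mathcal{O}_m$) act by the external norm $\nu_n$ of \eqref{eq:externalNorm}. Because every bispan factors as $T_t\circ N_n\circ R_r$, this forces
\[
 S\bigl([\,X\xleftarrow{r}A\xrightarrow{n}B\xrightarrow{t}Y\,]\bigr):=T_t\circ\nu_n\circ R_r,
\]
and it remains to see that $S$ is a well-defined product-preserving functor. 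Product-preservation is immediate, since $M$ is product-preserving and $\nu_n$ on a coproduct of orbits is, by construction, assembled from the $\nu^H_K$ using the multiplication of the Green functor.

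For functoriality it suffices to check that $S$ respects the generating interchange relations of Section \ref{section:biincomplete}. The relations among the $R$'s and $T$'s alone hold because that data is precisely $M$, so the content is the relations involving norms: (a) $\nu$ is functorial on composites and $\nu_{\id}=\id$; (b) each $\nu_n$ is a homomorphism of (multiplicative) commutative monoids, i.e.\ the norm is multiplicative and unital; (c) the Beck--Chevalley relation $R_f\circ N_g=N_k\circ R_h$ for every pullback square; and (d) the exponential-diagram relation $N_n\circ T_t=T_{t'}\circ N_{n'}\circ R_{r'}$. By the Yoneda lemma an element $x\in M(G/K)$ corresponds to a map $\hat x\colon A_{K/K}\to R^G_KM$, and under this identification \eqref{eq:externalNorm} presents $\nu^H_K(x)$ as the map coming from $\mu^H_K\circ N^H_K(\hat x)$. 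Unwinding (a)--(c) through this dictionary turns them into identities among the structural isomorphisms of Definition \ref{defn:symMack} and the norm multiplications $\mu^H_K$: relation (a) is the composition coherence (2) of Definition \ref{defn:coherence} (together with the normalization $\mu^H_H=\id$); relation (b) follows from the fact that $N^H_K$ is strong monoidal for the box product (Lemma \ref{lem:strongMonoidal}), so transports the Green multiplication of $R^G_KM$ to a monoid structure on $N^H_KR^G_KM$, together with the fact that the $\mu^H_K$ are maps of commutative monoids; and relation (c) follows from the double coset isomorphism built into Definition \ref{defn:symMack}(3) together with the compatibility coherence (4) and the conjugation coherence (3) of Definition \ref{defn:coherence}, once one observes that every pullback square of finite $G$-sets is glued, orbit by orbit, from the double-coset pullbacks appearing in \eqref{eq:doubleCosetFormula}.

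The remaining relation (d) is where I expect the main difficulty: it is the multiplicative--additive interchange law --- the ``norm of a sum'' of Example \ref{c2Example} --- and it is not one of the coherences of Definition \ref{defn:coherence}. The plan is to reduce to the case in which $M$ is the underlying monoid of a (bi-incomplete) Tambara functor, where (d) holds by definition, and then to propagate. Both sides of (d), regarded for fixed bispan data as maps $M(A)\to M(\text{target})$, are natural in the $\mathcal{O}_m$-commutative monoid $M$ and are built only from $\nu$, $T$, $R$ and the ring operations; a morphism of $\mathcal{O}_m$-commutative monoids preserves the $\mu^H_K$ and hence, by \eqref{eq:externalNorm}, the $\nu^H_K$, so it commutes with both sides of (d). Using the free/forgetful adjunction between $\mathcal{O}_m$-commutative monoids in $\mathcal{O}_a$-Mackey functors and $\mathcal{O}_a$-Mackey functors --- whose counit is split epic on underlying Mackey functors, hence surjective at each $G$-set --- it therefore suffices to verify (d) on free $\mathcal{O}_m$-commutative monoids. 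These underlie free bi-incomplete Tambara functors, so by Proposition \ref{prop:externalNormIsInternalNorm} their external norms coincide with the internal norms of a Tambara functor, for which (d) is an axiom; the crux of the whole argument is really this identification of free $\mathcal{O}_m$-commutative monoids with the underlying monoids of Tambara functors. (One could alternatively try to carry (d) along a presentation of $M$ by represented functors, invoking Remark \ref{rem:colimitCommuting}, Lemma \ref{lem:normOfRepresented}, and Lemma \ref{lem:normOfRepresentedTambara}.)

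Once all the interchange relations are verified, $S\colon\bitburnside\to\Set$ is a well-defined product-preserving functor, hence a semi $\compPair$-Tambara functor; since $M(X)$ is an abelian group for every $G$-set $X$, it is in fact a $\compPair$-Tambara functor. By construction its underlying Green functor is $M$ and, again by construction, its internal norms are the external norms $\nu^H_K$. In particular this shows the functor $U_G$ of Proposition \ref{prop:tambaraAreMonoid} is essentially surjective, which together with Proposition \ref{prop:externalNormIsInternalNorm} will give the promised mutual inverses in the generalized Hoyer--Mazur theorem.
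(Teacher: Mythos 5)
Your overall strategy for the hard relation (d) --- exploit naturality of both sides in $M$ and pull the verification back along a surjection from an $\mathcal{O}_m$-commutative monoid already known to underlie a Tambara functor --- is exactly the paper's strategy. The gap is in where you get that surjection. You propose the counit of a free/forgetful adjunction and then assert that free $\mathcal{O}_m$-commutative monoids underlie free bi-incomplete Tambara functors, which you yourself flag as ``the crux.'' But that identification is not available: it is essentially an instance of the theorem being proved (an equivalence between $\mathcal{O}_m$-commutative monoids and $\compPair$-Tambara functors, restricted to free objects), and nothing in the paper establishes it independently --- Proposition \ref{prop:externalNormIsInternalNorm} only tells you that \emph{if} $M=U_G(S)$ for some Tambara functor $S$ then $\nu=N$; it does not tell you that any particular $\mathcal{O}_m$-commutative monoid, free or otherwise, arises this way. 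As written, the argument for (d) is circular. The paper closes this loop by inducting on $|G|$: for $L$ a proper subgroup, $R^G_L M$ underlies an $L$-Tambara functor $S'$ by the inductive hypothesis, Theorem \ref{thm:normComparison} identifies $N^G_L R^G_L M$ with $U_G\mathcal{N}^G_L S'$, and the norm multiplication $\mu^G_L\colon N^G_L R^G_L M\to M$ is a morphism of $\mathcal{O}_m$-commutative monoids (Proposition \ref{prop:inducedMonoid}) that is surjective at $G/L$ by the triangle identity, coherence (1) of Definition \ref{defn:coherence}. That is the concrete, non-circular source of the needed surjection, and it is the step your proposal is missing.

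Two smaller points. First, your direct verification of the pullback interchange (c) from the double coset formula and coherences (3)--(4) is plausible but sketchy; the paper simply runs the restriction-compatibility through the same surjection argument as the transfers, which you could do too once (d)'s reduction is repaired. Second, your verifications of (a) and (b) from coherence (2), strong monoidality of $N^H_K$, and the fact that $\mu$ is a monoid map are fine and consistent with what the paper implicitly uses.
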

\begin{proof}
	
	We proceed by induction on the size of the group $G$.  The base case of the trivial group is immediate since both Tambara functors and $i_e^*\mathcal{O}_m$-commutative monoids are just commutative rings.  Suppose then that the result is true for all groups $H$ with $|H| < |G|$.
	
	By definition, the Mackey functor $M$ has the structure of a commutative monoid in $\Mack^G$, \ie a Green functor.  Since a Tambara functor is just a Green functor with additional norm maps, it suffices to show how the $\mathcal{O}_m$-commutative monoid structure determines norms $M(G/K)\to M(G/H)$ for each $K\leq H$ with $H/K$ an $\mathcal{O}_m$-admissible $H$-set. As indicated in the statement of the proposition, the norm maps are given by the external norm maps $\nu_K^H\colon M(G/K)\to M(G/H)$.

	All that remains is to check the external norms satisfy the necessary compatibility with the transfers and restriction maps of our Green functor $M$. 
	
	We first handle compatibility with transfers. Suppose we have $L\leq K\leq H$, and suppose we have picked an exponential diagram in $\Set^G$
	\[
		\begin{tikzcd}
			E \ar["\alpha"]{d} \ar["\beta"]{rr} & & F \ar["\gamma"]{d}\\
			G/L \ar["p"]{r} & G/K \ar["q"]{r} & G/H
		\end{tikzcd}
	\]
	where $p$ and $q$ are the canonical quotient maps.  We need to show that for any $x\in M(G/L)$ that $\nu_K^H(T_L^K(x)) = T_{\gamma}\nu_{\beta}R_{\alpha}(x)$.  We assume that $L$ is a proper subgroup of $H$, as otherwise there is nothing to show.
	
	We claim there is a Tambara functor $S$ and a map $f\colon S\to M$ of $\mathcal{O}_m$-commutative monoids so that $x=f(y)$ for some $y\in S(G/L)$.  Granting this, the naturality of external norms, transfers and restrictions implies
	\[
		\nu_K^H(T_L^K(x)) = \nu_K^H(T_L^K(f(y))) = f(\nu_K^H(T_L^K(y))) = f(T_{\gamma}\nu_{\beta}R_{\alpha}(y)) = T_{\gamma}\nu_{\beta}R_{\alpha}(x)
	\]
	where the third equality uses the fact that, by Proposition \ref{prop:externalNormIsInternalNorm}, the external norms in $U_GS$ must be equal to the internal norms of $S$ and thus are sufficiently compatible with the transfers. The same argument proves compatibility between the external norms and the restrictions and so it remains to prove the claim.
	
	By the induction hypothesis, and the assumption that $L\leq H$ is proper, the $i_L^*\mathcal{O}_m$-commutative monoid $R^G_L(M)$ is isomorphic to $U_L(S')$ for some $L$-bi-incomplete Tambara functor $S'$.  Applying the Tambara norm, and using Theorem \ref{thm:normComparison}, we have an equivalence of $\mathcal{O}_m$-commutative monoids $U_G\mathcal{N}^G_LS'\cong N^G_LR^G_LM$.  In the commutative square
	\[
		\begin{tikzcd}
			R^G_LN^G_LR^G_H(M)(L/L)\ar["R^G_L(\mu^G_L)"]{d} \ar["\cong"]{r} & N^G_LR^G_L(M)(G/L) \ar["\mu^G_L"]{d}\\
			R^G_L(M)(L/L)\ar["\cong"]{r} & M(G/L)
		\end{tikzcd}
	\]
	the left vertical arrow is surjective by (1) of Definition \ref{defn:coherence}.  It follows that the right arrow is as well and so $x$ is in the image of the map $U_G\mathcal{N}_L^G(S')\cong N_L^GR_L^G(M)\xrightarrow{\mu_L^G}M$.  This map is a morphism of $\mathcal{O}_m$-commutative monoids by Proposition \ref{prop:inducedMonoid} so we have proven the claim.
\end{proof}

Proposition \ref{prop:monoidsAreTambara} gives the object function of a functor 
\begin{equation}\label{eq:equivalenceFunctor}
	\Phi\colon \textrm{Comm}_{\mathcal{O}_m}(\Mack_{\mathcal{O}_a})\to \Tamb_{\compPair}^G
\end{equation}
from the category of $\mathcal{O}_m$-commutative monoids in $\mathcal{O}_a$-Mackey functors to the category of Tambara functors.  On morphisms, this functor sends a map of $\mathcal{O}_m$-commutative monoids to the map of underlying commutative monoids \ie to the map of Green functors.  The compatibility conditions on morphisms of $\mathcal{O}_m$-commutative monoids imply that such maps of Green functors commute with the external norms and hence are maps of Tambara functors.  Since two maps of Tambara functors are the same if and only if the underlying maps of Mackey functors are the same, this functor is faithful.  On the other hand, by Theorem \ref{prop:externalNormIsInternalNorm}, the composite $\Phi\circ U_G$ is actually the identity functor, and thus $\Phi$ is also full and surjective on objects.

\begin{thm}[Conjecture 7.8 of \cite{blumberghillbiincomplete}] \label{thm:secondMainTheorem}
	For any compatible pair of indexing categories $\compPair$, the functor
	\[
		\Phi\colon \mathrm{Comm}_{\mathcal{O}_m}(\Mack_{\mathcal{O}_a}) \to \Tamb_{\compPair}^G
	\]
	is an equivalence of categories.
\end{thm}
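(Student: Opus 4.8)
The plan is to exhibit the functor $U_G$ of Proposition \ref{prop:tambaraAreMonoid} as a two-sided inverse of $\Phi$. Both $\Phi$ and $U_G$ preserve underlying Green functors on objects and act on morphisms by passing to the underlying map of Green functors; in particular both are faithful, since a morphism of Tambara functors, respectively of $\mathcal{O}_m$-commutative monoids, is determined by its underlying map of Mackey functors. It therefore suffices to check that the two composites are (naturally isomorphic to) the appropriate identity functors.

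First I would verify $\Phi\circ U_G=\mathrm{id}_{\Tamb^G_{\compPair}}$. For a Tambara functor $S$ the Green functor underlying $\Phi(U_G(S))$ is, by construction, the Green functor underlying $U_G(S)$, namely that of $S$; and the internal norms of $\Phi(U_G(S))$ are by definition of $\Phi$ the external norms of $U_G(S)$, which agree with the internal norms of $S$ by Proposition \ref{prop:externalNormIsInternalNorm}. Hence $\Phi(U_G(S))=S$, and since both functors act on morphisms by forgetting to Green-functor maps the composite is the identity on morphisms as well. In particular $\Phi$ is surjective on objects with section $U_G$.

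The substantive step is $U_G\circ\Phi\cong\mathrm{id}$. For an $\mathcal{O}_m$-commutative monoid $M$ the Green functor underlying $U_G(\Phi(M))$ is again that of $M$, so the issue is whether the norm multiplications $\bar\mu^H_K$ built in Proposition \ref{prop:tambaraAreMonoid} from the Tambara functor $\Phi(M)$ agree with the prescribed norm multiplications $\mu^H_K$ of $M$; if so, $\mathrm{id}_M$ is an isomorphism of $\mathcal{O}_m$-commutative monoids $U_G(\Phi(M))\cong M$. Applying Proposition \ref{prop:externalNormIsInternalNorm} to the Tambara functor $\Phi(M)$ shows the external norms of $U_G(\Phi(M))$ equal the internal norms of $\Phi(M)$, which by construction of $\Phi$ are the external norms $\nu$ of $M$; so $\bar\mu$ and $\mu$ induce identical external norms. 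I would then argue that a norm multiplication is recovered from the external norms together with the restrictions and transfers: transposing $\mu^H_K\colon N^H_K R^G_K M\to R^G_H M$ across the presheaf adjunction $(C^H_K)_!\dashv (C^H_K)^*$ of Remark \ref{rem:colimitCommuting} yields a natural transformation $R^G_K M\to (R^G_H M)\circ C^H_K$, and since $R^G_K M$ is generated under colimits by represented Mackey functors this transformation is pinned down by its values on the $A_{K/L}$, which the pointwise Kan extension formula together with the coherence axioms of Definition \ref{defn:coherence} express through the external norms. The same description applies verbatim to $\bar\mu$, forcing $\bar\mu=\mu$.

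The hard part is exactly this last rigidity statement, that the $\mathcal{O}_m$-commutative-monoid structure on a Green functor is determined by its external norms; everything else is bookkeeping with the pointwise Kan extension formula. Granting the rigidity, $\Phi$ and $U_G$ are mutually inverse and hence equivalences; equivalently, $\Phi$ is faithful, essentially surjective by the computation of $\Phi\circ U_G$, and full because $U_G\circ\Phi=\mathrm{id}$ identifies $\Hom(M,N)$ in $\mathrm{Comm}_{\mathcal{O}_m}(\Mack_{\mathcal{O}_a})$ with $\Hom(\Phi M,\Phi N)$ in $\Tamb^G_{\compPair}$ inside the set of morphisms of the underlying Green functors.
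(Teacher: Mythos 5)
Your proposal follows the paper's strategy in all essentials: the same pair of functors $U_G$ and $\Phi$, faithfulness read off from the fact that morphisms on both sides are determined by underlying Mackey functor maps, and the identity $\Phi\circ U_G=\mathrm{id}$ deduced from Proposition \ref{prop:externalNormIsInternalNorm}. Where you diverge is in how the equivalence is concluded. The paper stops after faithfulness and $\Phi\circ U_G=\mathrm{id}$ and asserts that these give fullness and surjectivity on objects; you instead verify $U_G\circ\Phi\cong\mathrm{id}$, which forces you to confront whether the norm multiplications $\mu^H_K$ of an $\mathcal{O}_m$-commutative monoid are recoverable from its external norms and Green functor structure. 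You are right that this rigidity is where the substance lies, and it is worth noting that the paper's own fullness claim quietly depends on the same fact: a Green functor map that commutes with external norms must be shown to commute with the $\mu^H_K$ before it can be promoted to a morphism of $\mathcal{O}_m$-commutative monoids. So your route is the more honest accounting of what remains to be checked, not a detour.

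That said, the rigidity statement is only sketched, and the sketch underestimates the work. Transposing $\mu^H_K$ to $\tilde\mu\colon R^G_K M\Rightarrow (C^H_K)^*R^G_H M$ and reducing to components at orbits $K/L$ is the right move, and at $L=K$ the component is literally $\nu^H_K$. But for $L<K$ the component lands in $R^G_H M\bigl(\map_K(H,K/L)\bigr)$, and expressing it through external norms requires decomposing the coinduced set into $H$-orbits and invoking not only coherence axiom (2) of Definition \ref{defn:coherence} but also axioms (3) and (4) (conjugation and the double coset formula) together with the restrictions and transfers of $M$. This is true and provable, but ``the coherence axioms express the components through the external norms'' is a one-line stand-in for the most technical verification in the theorem; if you want your argument to be complete where the paper's is terse, this is the lemma you need to write out in full.
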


\section{Proofs of Technical Lemmas}\label{section:technicalProofs}

This section contains the proofs of two lemmas used elsewhere in the paper. We have deferred the proofs of these lemmas as the details are non-essential to understanding the goals of the paper and consist mostly of diagram chases and formal category theory.
The first lemma is a double coset formula for the coinduction functor, used in the proof that $\mathcal{O}_a$-Mackey functors form a symmetric monoidal $\mathcal{O}_m$-Mackey functor.  This formula is surely well-known, but we could not find a reference.  The second is the proof of Lemma \ref{lem:alphaIsNatural} that the maps $\alpha\colon i_HC^H_K\Rightarrow I^H_Ki_K$ actually assemble into a natural transformation.  We first give the proof of the double coset formula.

\begin{lem}\label{lem:coinductionDoubleCosetFormula}
	Let $H$ be a finite group and suppose $L$ and $K$ are two subgroups of $H$.  Let $\gamma_1,\dots,\gamma_n$ be a collection of representatives for the double cosets $K\backslash H/L$.  For any $L$-set $X$, there is an isomorphism of $K$-sets
	\[
		\textrm{Res}^H_K\map_L(H,X) \cong \prod\limits_{\gamma_i} \map_{K\cap L^{\gamma_i}}(K, \textrm{Res}^{L^{\gamma_i}}_{K\cap L^{\gamma_i}}(c_{\gamma_i}X))
	\]
	where $c_{\gamma_i}(X)$ is the $L^{\gamma_i}$-set with the same objects as $X$ but and action defined by $(\gamma_il\gamma_i^{-1})\cdot x = lx$.
\end{lem}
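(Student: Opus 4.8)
The plan is to prove the isomorphism by the Yoneda lemma, testing both sides against an arbitrary $K$-set $Y$ and reducing everything to the classical Mackey double coset formula for \emph{induced} $G$-sets, which is recorded (for transitive sets, via the orbit decomposition of $H/L\times H/K$) in the discussion preceding Lemma~\ref{lem:propertiesOfMackeyFunctors}. Since $\map_L(H,-)$ is right adjoint to $\textrm{Res}^H_L$ and $\textrm{Res}^H_K$ is right adjoint to $H\times_K(-)$, for any $K$-set $Y$ there is a natural chain of bijections
\[
\Hom_K\!\bigl(Y,\textrm{Res}^H_K\map_L(H,X)\bigr)\cong\Hom_H\!\bigl(H\times_K Y,\map_L(H,X)\bigr)\cong\Hom_L\!\bigl(\textrm{Res}^H_L(H\times_K Y),X\bigr).
\]
First I would rewrite $\textrm{Res}^H_L(H\times_K Y)$ using the Mackey formula: decomposing $H=\coprod_j L\gamma_j' K$ as an $(L,K)$-biset and computing orbit decompositions gives a natural isomorphism $\textrm{Res}^H_L(H\times_K Y)\cong\coprod_j \textrm{Ind}^L_{L\cap K^{\gamma_j'}}\textrm{Res}^{K^{\gamma_j'}}_{L\cap K^{\gamma_j'}}c_{\gamma_j'}Y$, where the $\gamma_j'$ run over a transversal for $L\backslash H/K$. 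Substituting this in, using that $\Hom_L(-,X)$ sends coproducts to products, and adjoining across $\textrm{Ind}^L_{L\cap K^{\gamma_j'}}\dashv\textrm{Res}$, the right-hand side becomes $\prod_j\Hom_{L\cap K^{\gamma_j'}}\!\bigl(\textrm{Res}^{K^{\gamma_j'}}_{L\cap K^{\gamma_j'}}c_{\gamma_j'}Y,\ \textrm{Res}^L_{L\cap K^{\gamma_j'}}X\bigr)$.

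The second step is bookkeeping with conjugations: I would apply the conjugation isomorphism of categories $c_{(\gamma_j')^{-1}}$ to the $j$-th factor. This carries $L\cap K^{\gamma_j'}$ to $K\cap L^{(\gamma_j')^{-1}}$, carries $c_{\gamma_j'}Y$ back to $Y$, and carries $\textrm{Res}^L_{L\cap K^{\gamma_j'}}X$ to $\textrm{Res}^{L^{(\gamma_j')^{-1}}}_{K\cap L^{(\gamma_j')^{-1}}}c_{(\gamma_j')^{-1}}X$. Writing $\gamma_i=(\gamma_i')^{-1}$, the collection $\{\gamma_i\}$ is now a transversal for $K\backslash H/L$ (inversion exchanges the two sets of double cosets), and the $i$-th factor is $\Hom_{K\cap L^{\gamma_i}}\!\bigl(\textrm{Res}^K_{K\cap L^{\gamma_i}}Y,\ \textrm{Res}^{L^{\gamma_i}}_{K\cap L^{\gamma_i}}c_{\gamma_i}X\bigr)$. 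Adjoining back across $\textrm{Res}^K_{K\cap L^{\gamma_i}}\dashv\map_{K\cap L^{\gamma_i}}(K,-)$ and pulling the product inside $\Hom_K(Y,-)$ produces $\Hom_K\!\bigl(Y,\prod_i\map_{K\cap L^{\gamma_i}}(K,\textrm{Res}^{L^{\gamma_i}}_{K\cap L^{\gamma_i}}c_{\gamma_i}X)\bigr)$; since every bijection above is natural in $Y$, Yoneda gives the claimed isomorphism of $K$-sets. Along the way one also checks that the conjugate $c_{\gamma_i}X$ so produced is precisely the $L^{\gamma_i}$-set described in the statement, which is just unwinding the definition of the conjugation functor.

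I expect the only real difficulty to be keeping the variance conventions straight throughout: on which side the double cosets sit ($L\backslash H/K$ versus $K\backslash H/L$), the direction of the conjugation twist ($c_{\gamma_i}$ versus $c_{\gamma_i^{-1}}$), and the fact that inversion $\gamma\mapsto\gamma^{-1}$ interchanges the two double coset decompositions. As a more hands-on alternative that bypasses the adjunction juggling, one can argue directly: an element of $\map_L(H,X)$ is an $L$-equivariant map $f\colon H\to X$; restricting to $K$ forgets all but the right $K$-action; the right $K$-set of $L$-orbits in $H$ is $L\backslash H$, whose $K$-orbits are the double cosets, with $\mathrm{Stab}_K(L\gamma_i^{-1})=K\cap L^{\gamma_i}$; and restricting $f$ to each piece $L\gamma_i^{-1}K$ and precomposing with $k\mapsto \gamma_i^{-1}k$ identifies the $i$-th factor with $\map_{K\cap L^{\gamma_i}}(K,\textrm{Res}^{L^{\gamma_i}}_{K\cap L^{\gamma_i}}c_{\gamma_i}X)$. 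In this route the one nontrivial verification is that this last identification is well defined, which is exactly the observation that for $m\in K\cap L^{\gamma_i}$ the element $\gamma_i^{-1}m\gamma_i$ lies in $L$ and acts on $X$ the way $m$ acts on $c_{\gamma_i}X$.
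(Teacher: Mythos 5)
Your argument is correct, and it takes a genuinely different route from the paper. The paper works at the level of groupoids: it identifies $\Set^G$ with $\mathrm{Fun}(BG,\Set)$, forms the comma square over $BH$ for the inclusions $BK,BL\to BH$, invokes the exactness (Beck--Chevalley property) of comma squares for right Kan extensions, and then computes the comma groupoid directly --- its $\pi_0$ is $K\backslash H/L$ and its isotropy at $\gamma_i$ is $K\cap L^{\gamma_i}$ --- to read off the product decomposition. You instead dualize the classical Mackey formula for $\textrm{Res}^H_L(H\times_K Y)$ through the two adjunctions $H\times_K(-)\dashv\textrm{Res}^H_K$ and $\textrm{Res}^H_L\dashv\map_L(H,-)$ and conclude by Yoneda; your input is the orbit decomposition of an induced set restricted to a subgroup rather than the exactness of comma squares. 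The trade-off is that your route needs the Mackey formula for induction as a black box (itself an elementary orbit count, and essentially the transitive case already recorded before Lemma \ref{lem:propertiesOfMackeyFunctors}) plus careful bookkeeping of which side the double cosets live on and the inversion $\gamma\mapsto\gamma^{-1}$ exchanging $L\backslash H/K$ with $K\backslash H/L$, which you handle correctly; the paper's route packages all of that bookkeeping into the single computation of $\pi_0$ and $\pi_1$ of the comma groupoid, at the cost of citing a more abstract exactness result. Your hands-on alternative (restricting an $L$-map $f\colon H\to X$ to the pieces $L\gamma_i^{-1}K$ and checking $\mathrm{Stab}_K(L\gamma_i^{-1})=K\cap L^{\gamma_i}$) is also correct and is the most elementary of the three; the one point you rightly flag, that $\gamma_i^{-1}m\gamma_i\in L$ acts on $X$ as $m$ acts on $c_{\gamma_i}X$, matches the convention in the statement.
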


\begin{proof}
	For any group $G$, let $BG$ denote the category with one element and morphism set $G$.  We adopt the convention that the composite
	\[
		\bullet \xrightarrow{g_1} \bullet \xrightarrow{g_2} \bullet
	\]
	is $g_1g_2$, instead of $g_2g_1$.
	
	The category $\Set^G$ is equivalent to the category of functors $\mathrm{Fun}(BG,\Set)$.  For any $J\leq G$, the inclusion of subcategories $i_J\colon BJ\to BG$ determines the restriction $\mathrm{Res}^G_J =i_J^*\colon \Set^G\to \Set^J$.  By uniqueness of adjoints, the coinduction functor $\map_J(G,-)\colon \Set^J\to \Set^G$ is isomorphic to right Kan extension  functor $(r_J)_*$.
	
	Consider the following square of groupoids
	\[
		\begin{tikzcd}[row sep = large,column sep = large]
			\mathrm{Comma}(i_K,i_L) \ar["u"]{r} \ar["v"]{d} & BL\ar["i_L"]{d} \\
			BK \ar["i_K"]{r} & BH \ar[shorten =3mm,Rightarrow,from =2-1,to=1-2,"\phi"']
		\end{tikzcd}
	\]
	where $\mathrm{Comma}(i_K,i_L)$ is the comma category $i_K/i_L$ and $\phi$ is the canonical comma natural transformation. The objects of  $\mathrm{Comma}(i_K,i_L)$ are all the elements $h\in H$ and an arrow $h\to h'$ is a pair $(k,l)\in K\times L$ such that $khl^{-1}=h'$.  
	
	By Proposition 1.26 of \cite{GrothStable}, the comma square is exact and meaning there is a natural isomorphism $v_*u^*\cong i_K^*(i_L)_*$.  For any $L$-set $X$, we have observed that $i_K^*(i_L)_*(X)\cong \mathrm{Res}^H_K\map_L(H,X)$ is the left hand side of our claimed isomorphism.  It remains to identify $v_*u^*(X)$.
		
	From the description of morphisms in $\mathrm{Comma}(i_K,i_L)$ we see that $\pi_0\mathrm{Comma}(i_k,i_L)$ is in bijection with the double coset $K\backslash H/L$.  For any $\gamma_i$, we have $\pi_1(\mathrm{Comma}(i_K,i_L),\gamma_i)$ is equal to the set of pairs $(k,l)$ such that $k = \gamma_il\gamma_i^{-1}$, which is naturally isomorphic to $K\cap L^{\gamma_i}$. Since every groupoid is isomorphic to the union of of the fundamental groups of its components, we have an isomorphism of groupoids
	\[
		\mathrm{Comma}(i_K,i_L)\cong \coprod\limits_{\gamma_i} B(K\cap L^{\gamma_i}) 
	\]
	which gives a natural isomorphism
	\begin{equation}\label{eq:commaSquareIso}
		\mathrm{Fun}(\mathrm{Comma}(i_K,i_L),\Set)\cong \prod\limits_{\gamma_i} \mathrm{Fun}(B(K\cap L^{\gamma_i}),\Set)\cong \prod\limits_{\gamma_i} \Set^{B(K\cap L^{\gamma_i})}
	\end{equation}
	
	After identification \eqref{eq:commaSquareIso}, the map 
	\[
		v^*\colon \Set^K\to \prod\limits_{\gamma_i} \Set^{B(K\cap L^{\gamma_i})}
	\] 
	is the product of the restriction functors $\mathrm{Res}^K_{K\cap L^{\gamma_i}}$ and the map $u^*$ is the product of the conjugations  isomorphic $BL\cong B(L^{\gamma_i})$ followed by the restrictions $\mathrm{Res}^{L^{\gamma_i}}_{K\cap L^{\gamma_i}}$.  Given any $L$-set $X$, we have computed
	\[
		v_*u^*(X)\cong \prod\limits_{\gamma_i} \map_{K\cap L^{\gamma_i}}(K, \textrm{Res}^{L^{\gamma_i}}_{K\cap L^{\gamma_i}}(c_{\gamma_i}X))
	\]
	completing the proof.
\end{proof}

The rest of this section is devoted to the proof of Lemma \ref{lem:alphaIsNatural}.  Recall we have a square
\begin{equation}
\begin{tikzcd}[row sep =large,  column sep = large]
	\burnside_{\mathcal{O}_a}^K  \ar["C^H_K"]{d} \ar["i_K"]{r}& \Tburnside^K_{\compPair} \ar["I^H_K"]{d}\\
	\burnside_{\mathcal{O}_a}^H \ar["i_H"]{r}& \Tburnside^H_{\compPair}  \ar[shorten =4mm, Rightarrow, from = 1-2, to = 2-1,"\alpha"']
\end{tikzcd}
\end{equation}
of functors that we are trying to show is exact.  For any $K$-set $T$, the component $\alpha_T\colon I^H_Ki_K(T)\to i_HC^H_K(T)$ is represented by the bispan
\[
	H\times_{K}T\xleftarrow{H\times_{K}(\epsilon^C_{T})} H\times_K\mathrm{Res}^H_K\map_{K}(H,T) \xrightarrow{\epsilon^I_{\map_K(H,T)}}  \map_{K}(H,T) \xrightarrow{=} \map_{K}(H,T)
\]
where $\epsilon^C$ and $\epsilon^I$ are the counits of the coinduction-restriction and induction restriction adjunctions respectively. In order to keep the presentation a bit more organized, we abuse notation slightly and replace, whenever it won't cause confusion, all instances of induction, restriction, and coinduction functors by the symbols $I$, $\rho$, and $C$ respectively so that the bispan $\alpha$ becomes simply
\begin{equation}\label{eq:alpha}
	IT\xleftarrow{I\epsilon_{T}^C} I\rho C T \xrightarrow{\epsilon_{CT}^I} CT.
\end{equation}

\begin{lem}[Lemma \ref{lem:alphaIsNatural}] \label{lem:secondTechnicalLemma}
	The maps $\alpha_T\colon I^H_Ki_K(T)\to i_HC^H_K(T)$ defined by \eqref{eq:alpha} assemble into a natural transformation $\alpha\colon i_HC^H_K\Rightarrow I^H_Ki^K$.
\end{lem}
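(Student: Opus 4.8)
The plan is to verify the naturality square on a generating set of morphisms and then extend by pasting. By \eqref{spanDecomposition} every morphism of $\burnside^K_{\mathcal{O}_a}$ is a composite $T_t\circ R_r$ with $r$ an arbitrary map of $K$-sets and $t$ a map in $i^*_K\mathcal{O}_a$, and the naturality square for $\alpha$ at a composite commutes once it commutes at each factor; so it suffices to check that
\begin{equation*}
\begin{tikzcd}[column sep = large]
I^H_Ki_K(T) \ar[r,"\alpha_T"] \ar[d,"I^H_Ki_K(\phi)"'] & i_HC^H_K(T) \ar[d,"i_HC^H_K(\phi)"] \\
I^H_Ki_K(T') \ar[r,"\alpha_{T'}"'] & i_HC^H_K(T')
\end{tikzcd}
\end{equation*}
commutes in $\Tburnside^H_{\compPair}$ when $\phi=R_r$ and when $\phi=T_t$. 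I abbreviate $I=I^H_K$, $\rho=\rho^H_K$, $C=C^H_K$ as in \eqref{eq:alpha}, so that $\alpha_S=N_{\epsilon^I_{CS}}\circ R_{I\epsilon^C_S}$, and I will use: that $C$ (being defined on spans by applying coinduction, which by Corollary~\ref{cor:firstConjecture} restricts correctly) sends $R_r\mapsto R_{Cr}$ and $T_t\mapsto T_{Ct}$, and likewise $I\circ i_K$ sends $R_r\mapsto R_{Ir}$, $T_t\mapsto T_{It}$; that $\rho$ and $I$ preserve pullbacks (for $\rho$ because it is a right adjoint; for $I$ because induction preserves pullbacks, cf.\ Proposition~\ref{prop:ambidexterity}); and that the counit $\epsilon^I$ of $I\dashv\rho$ has \emph{pullback} naturality squares, since $I\rho X\cong X\times(H/K)$ with $\epsilon^I_X$ the projection.

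For $\phi=R_r$, with $r\colon T'\to T$, the left-hand composite is $N_{\epsilon^I_{CT'}}\circ R_{I\epsilon^C_{T'}}\circ R_{Ir}=N_{\epsilon^I_{CT'}}\circ R_{Ir\circ I\epsilon^C_{T'}}$, which by naturality of the counit $\epsilon^C$ of $\rho\dashv C$ (and functoriality of $I$) equals $N_{\epsilon^I_{CT'}}\circ R_{I\epsilon^C_T\circ I\rho Cr}$. The right-hand composite is $R_{Cr}\circ N_{\epsilon^I_{CT}}\circ R_{I\epsilon^C_T}$; pushing $R_{Cr}$ past $N_{\epsilon^I_{CT}}$ through the (pullback) naturality square of $\epsilon^I$ at $Cr$ via the restriction--norm interchange of Lemma~\ref{tambaraDoubleCosetFormula} rewrites it as $N_{\epsilon^I_{CT'}}\circ R_{I\rho Cr}\circ R_{I\epsilon^C_T}=N_{\epsilon^I_{CT'}}\circ R_{I\epsilon^C_T\circ I\rho Cr}$, so the two sides agree.

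The case $\phi=T_t$, with $t\colon T\to T'$ in $i^*_K\mathcal{O}_a$, is the crux. Since the transfer leg of $\alpha_T$ is an identity, the right-hand composite is the single bispan $[\,IT\xleftarrow{I\epsilon^C_T}I\rho CT\xrightarrow{\epsilon^I_{CT}}CT\xrightarrow{Ct}CT'\,]$. For the left-hand composite $N_{\epsilon^I_{CT'}}\circ R_{I\epsilon^C_{T'}}\circ T_{It}$ I first use Lemma~\ref{tambaraDoubleCosetFormula} to rewrite $R_{I\epsilon^C_{T'}}\circ T_{It}$ as $T_{Iq_1}\circ R_{Iq_2}$, where $Q:=\rho CT'\times_{T'}T$ is the pullback in $\Set^K$ of $\epsilon^C_{T'}$ along $t$ (so $IQ=I\rho CT'\times_{IT'}IT$, using that $I$ preserves pullbacks), with projections $q_1\colon Q\to\rho CT'$, $q_2\colon Q\to T$. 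Then I apply the norm--transfer interchange law $N_nT_t=T_{t'}N_{n'}R_{r'}$ to the composable pair $IQ\xrightarrow{Iq_1}I\rho CT'\xrightarrow{\epsilon^I_{CT'}}CT'$, with $t',n',r'$ read off the exponential diagram built from the dependent product $P:=(\epsilon^I_{CT'})_*(IQ)$. Combining restrictions, the left-hand composite becomes $T_{t'}\circ N_{n'}\circ R_{(\,\cdot\,)}$, and identifying it with the right-hand bispan reduces to two computations in $\Set^H$: (a) the pullback of $\epsilon^I_{CT'}\colon I\rho CT'\to CT'$ along $Ct=\map_K(H,t)\colon CT\to CT'$ is $I\rho CT$, again via $I\rho X\cong X\times(H/K)$, which identifies $n'$ with $\epsilon^I_{CT}$ and the leftover restriction with $R_{I\epsilon^C_T}$; and (b) $P\cong CT$ as objects over $CT'$, under which $t'$ becomes $Ct$.

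I expect (b) to be the main obstacle. It is a concrete computation of a dependent product along the projection $\epsilon^I_{CT'}\colon CT'\times(H/K)\to CT'$, which fiberwise amounts to taking sections over $H/K$; carried out for the object $IQ\to I\rho CT'$ and combined with the explicit description of $\map_K(H,-)$ --- in effect the relative form of the identification of coinduction with a dependent product from Example~\ref{ex:coinductionAsDependentProduct} --- it shows that such sections correspond precisely to $K$-equivariant maps $H\to T$ lying over the prescribed map $H\to T'$, i.e.\ to points of $CT$ fibred over $CT'$ by post-composition with $t$; the same unwinding pins down the remaining structure maps. Once (a) and (b) are in hand, the two composites are literally the same bispan, and the rest is routine bookkeeping with bispan composition and the interchange laws already invoked.
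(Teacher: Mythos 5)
Your proposal is correct and follows essentially the same route as the paper: the paper treats a general span $[T\xleftarrow{f} A\xrightarrow{g} T']$ in one pass, performing exactly your two interchanges (the pullback interchange of Lemma \ref{tambaraDoubleCosetFormula} followed by the norm--transfer exponential-diagram interchange), while you split into the $R_r$ and $T_t$ cases, which is an equivalent bookkeeping choice. Your step (b) --- identifying the dependent product $(\epsilon^I_{CT'})_*$ of the pulled-back object with $CT\to CT'$, together with the identification of the leftover restriction leg with $I\epsilon^C_T$ --- is precisely the content of the paper's Lemma \ref{lem:exponentialLemma}, which the paper obtains by citing Hoyer's Lemma 2.3.5 plus a triangle-identity argument rather than by the direct fiberwise-sections computation you sketch; both arguments work.
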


\begin{proof}
	We need to show that the maps $\alpha_T$ are sufficiently natural.  Suppose we are given a morphism $\omega\colon T\to T'$ in $\burnside^K_{\mathcal{O}_a}$ which is represented by the span $T\xleftarrow{f} A \xrightarrow{g} T'$.  We need to show that the following square commutes in $\Tburnside^H_{\compPair}$:
	\begin{equation}\label{eq:squareToCommute}
		\begin{tikzcd}[row sep =large, column sep = large]
			IT \ar["Ii_K(\omega)"]{r} \ar["\alpha_T"]{d}&  IT' \ar["\alpha_{T'}"]{d}\\
			CT \ar["i_HC(\omega)"]{r} & CT'
		\end{tikzcd}
	\end{equation}
	
	To evaluate the top right composite, fix a choice of pullback diagram
	\begin{equation}\label{eq:aPullback}
		\begin{tikzcd}
			P \ar["u"]{r} \ar["v"]{d} &\rho CT' \ar["\epsilon^C"]{d} \\
			A \ar["g"]{r} & T'
		\end{tikzcd}
	\end{equation}
	and consider the diagram
	\begin{equation}\label{eq:composite1}
		\begin{tikzcd}[row sep = large,column sep = large]
			IT & IA \ar["I(f)"']{l} \ar["="]{r} & IA \ar["I(g)"]{r} & IT' \\
			& & IP \ar["I(v)"]{u} \ar["I(u)"]{r}& I\rho C T'\ar["I(\epsilon^C)"']{u} \ar["\epsilon^I"]{d}\\
			& I\rho CA \ar["F"]{ur} \ar["\epsilon^I"]{r}& CA \ar["Cg"]{r} & CT'\ar[=]{d}\\
			& & & CT' \ar[from = 1-4,to=2-3,phantom,"(1)"] \ar[from = 2-4,to=3-3,phantom,"(2)"]
		\end{tikzcd}
	\end{equation}
	where the square (1) is the result of applying $H\times_K(-)$ to the pullback \eqref{eq:aPullback} and is thus a pullback. We claim the trapezoid (2) is an exponential diagram for the composable arrows $IP\xrightarrow{I(u)} I\rho CT' \xrightarrow{\epsilon^I}CT'$ and that the map $F$ is such that $I(v)\circ F = I(\epsilon^C_{A})$. We defer the proof of both claims to Lemma \ref{lem:exponentialLemma} below.  
	
	The diagram \eqref{eq:composite1}, and the composition laws in the category $\Tburnside^H_{\compPair}$ tells us the composite $\alpha_{T'}\circ I^H_Ki_K(\omega)$ is the bispan
	\begin{equation}\label{eq:composite3}
		IT \xleftarrow{I(f\circ \epsilon^C)} I\rho CA \xrightarrow{\epsilon^I} CA\xrightarrow{C(g)} CT'
	\end{equation}
	
	We compute left-bottom composite of \eqref{eq:squareToCommute} using the diagram
	\begin{equation}\label{eq:composite2}
		\begin{tikzcd}[row sep = large,column sep = large]
			IT &   & &\\
		I\rho CT \ar["I(\epsilon^C)"]{u} \ar["\epsilon^I"']{d}	&  I\rho C A \ar["I\rho C(f)"']{l} \ar["\epsilon^I"]{d}& & \\
		 CT \ar["="']{d}	& CA \ar["C(f)"']{l} \ar["="]{d} & &\\
		CT 	& CA\ar["C(f)"']{l} \ar["="]{r} &  CA\ar["C(g)"]{r} & CT' \ar[from = 2-1,to=3-2,phantom,"(4)"] \ar[from = 3-1,to=4-2,phantom,"(3)"]
		\end{tikzcd}
	\end{equation}
	in which squares (3) and (4) are both pullbacks.  To see that (4) is a pullback, note that for any $X\in \Set^H$, we have an isomorphism $I\rho(X) \cong H/K\times X$ and the morphism $\epsilon^I\colon I\rho(X)\to X$ is the projection map.
	
	The diagram \eqref{eq:composite2}, and the composition laws in $\Tburnside^H_{\compPair}$ tells us the composite $i_HC(\omega)\circ\alpha_{T}$ is the bispan represented by
	\[
	IT \xleftarrow{I(\epsilon^C\circ \rho C(f))} I\rho CA \xrightarrow{\epsilon^I} CA\xrightarrow{C(g)} CT'
	\] 
	which, by the naturality of $\epsilon^C$, is equal to \eqref{eq:composite3}.
\end{proof}
\begin{lem}\label{lem:exponentialLemma}
	The trapezoid (2) from the proof of Lemma \ref{lem:secondTechnicalLemma} is an exponential diagram.  The composite $I(v)\circ F$ is equal to $I(\epsilon_A^C)$.
\end{lem}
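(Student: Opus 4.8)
The plan is to construct the exponential diagram of the composable pair $IP\xrightarrow{I(u)}I\rho CT'\xrightarrow{\epsilon^I}CT'$ by hand, match its entries and structure maps with those of trapezoid (2) — in particular identifying the counit with $F$ — and then read off $I(v)\circ F=I(\epsilon^C_A)$ as a byproduct. Two facts do all the work. First, the counit $\epsilon^I_W\colon I\rho W\to W$ is, naturally in the $H$-set $W$, the projection $(H/K)\times W\to W$; this is the isomorphism already invoked in the proof of Lemma \ref{lem:secondTechnicalLemma}, and it identifies the dependent product along $\epsilon^I_{CT'}$ with the dependent product along the projection $(H/K)\times CT'\to CT'$, which is computed fibrewise as a finite product since $H/K$ is finite. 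Second, induction $I=H\times_K(-)$ preserves pullbacks, so $IP=I(A\times_{T'}\rho CT')\cong IA\times_{IT'}I\rho CT'$; thus $IP\to I\rho CT'$ is the pullback of $IA\xrightarrow{I(g)}IT'$ along $I\epsilon^C_{T'}$, and its fibres are fibres of $g$.

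Combining these, $(\epsilon^I_{CT'})_*(IP)$ is, over a point $\phi\in CT'$, a product indexed by $H/K$ of fibres of $g$. On the other hand a point of $CA=\map_K(H,A)$ lying over $\phi\in\map_K(H,T')$ is a $K$-equivariant lift of $\phi$ through $g$, and such a lift is freely and uniquely determined by its values on a transversal of $K\backslash H$, one value in each relevant fibre of $g$; under the bijection $K\backslash H\cong H/K$ this is exactly the same data. Carrying out this comparison with attention to the $H$-actions yields a natural isomorphism $CA\cong(\epsilon^I_{CT'})_*(IP)$ over $CT'$ which identifies $Cg$ with the structure map of the dependent product. Together with the immediate identification $I\rho CT'\times_{CT'}CA\cong I\rho CA$ coming from $I\rho X\cong(H/K)\times X$, carrying the pullback projection to $\epsilon^I_{CA}$, this exhibits trapezoid (2) as the exponential diagram of the pair, with $F$ being the counit $(\epsilon^I_{CT'})^*(\epsilon^I_{CT'})_*(IP)\to IP$.

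It then remains to compute $F$ and postcompose with $I(v)$. Tracing the isomorphisms above, $F$ sends the point of $I\rho CA\cong(H/K)\times CA$ named by $(hK,\psi)$ to the point of $IP\cong IA\times_{IT'}I\rho CT'$ whose $IA$-component is $[h,\psi(h^{-1})]$; equivalently $F=I(w)$, where $w\colon\rho CA\to P=A\times_{T'}\rho CT'$ is the canonical $K$-map with components $\epsilon^C_A$ and $\rho(Cg)$. This $w$ is well defined precisely because $g\circ\epsilon^C_A=\epsilon^C_{T'}\circ\rho C(g)$ by naturality of $\epsilon^C$. Since $v\colon P\to A$ is the first projection we have $v\circ w=\epsilon^C_A$, and therefore $I(v)\circ F=I(v\circ w)=I(\epsilon^C_A)$, as claimed.

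I expect the only real difficulty to be bookkeeping rather than anything conceptual: fixing the left/right conventions on $\map_K(H,-)$, on $H\times_K(-)$, and on the counits $\epsilon^I$ and $\epsilon^C$ so that the fibrewise identification $CA\cong(\epsilon^I_{CT'})_*(IP)$ is visibly natural in $T'$ and $H$-equivariant, and then confirming that the explicit formula for $F$ really is the adjunction counit. Everything else is a routine pullback-and-adjunction chase.
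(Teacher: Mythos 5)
Your proposal is correct, but it reaches the conclusion by a genuinely different route than the paper. The paper's proof is purely formal: it builds an auxiliary pullback square with vertex $Q$, uses that coinduction preserves pullbacks together with the triangle identity $C(\epsilon^C)\circ\eta^C=\id$ to arrange $Q\cong CA$ with $\delta=C(g)$, and then cites Lemma 2.3.5 of Hoyer, which manufactures the exponential diagram of a composite $IP\xrightarrow{I(u)}I\rho CT'\xrightarrow{\epsilon^I}CT'$ directly from such a pullback, with $F=I(\widehat{\gamma})$; the identity $I(v)\circ F=I(\epsilon^C_A)$ then falls out of naturality of $\epsilon^C$. You instead compute the dependent product $(\epsilon^I_{CT'})_*(IP)$ from scratch, using the fibrewise description of dependent products in $\Set^H$, the identification of $\epsilon^I$ with a projection off $H/K$, and the standard bijection between $K$-equivariant lifts through $g$ and tuples of fibre elements indexed by a transversal. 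Your explicit $F=I(w)$, with $w\colon\rho CA\to A\times_{T'}\rho CT'$ having components $\epsilon^C_A$ and $\rho C(g)$, is in fact the same map as the paper's $I(\widehat{\gamma})$ (one checks $v\circ\widehat{\gamma}=\epsilon^C_A$ and $u\circ\widehat{\gamma}=\rho C(g)$ using naturality and a triangle identity), so the two proofs agree on the nose about what $F$ is. What your approach buys is self-containedness --- no appeal to Hoyer --- and a completely explicit formula for $F$, which could be reused elsewhere; the cost is the equivariance and convention bookkeeping in the isomorphism $CA\cong(\epsilon^I_{CT'})_*(IP)$, which you correctly identify as the only place requiring care (your formula $[h,\psi(h^{-1})]$ is the one consistent with the paper's conventions, where $\epsilon^C$ is evaluation at the identity and $H$ acts on $\map_K(H,-)$ by right translation). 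The paper's route is shorter and avoids element chasing entirely, at the price of outsourcing the key identification to an external lemma.
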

\begin{proof}
	Consider the commutative diagram
	\[
	\begin{tikzcd}
	Q \ar["\gamma"]{r} \ar["\delta"]{d}& CP \ar["C(v)"]{r} \ar["C(u)"]{d} & CA\ar["C(g)"]{d}\\
	CT' \ar["\eta^C"]{r} & C\rho C T' \ar["C(\epsilon^C)"]{r} & CT'
	\end{tikzcd}
	\]
	in which $Q$ is chosen so that left square is a pullback.  Since the right square is $C$ applied to the pullback square \eqref{eq:aPullback}, and coinduction preserves pullbacks, the outside rectangle is a pullback.  Moreover, since the composite along the bottom is the identity by a triangle identity, we could have chosen $Q$ so that $Q=CA$, $\delta = C(g)$, and $C(v)\circ \gamma=\id_{CA}$.  
	
	Lemma 2.3.5 of \cite{Hoyer} says precisely that there is an exponential diagram
	\[
	\begin{tikzcd}
	& I\rho Q \ar["\epsilon^I"]{r}\ar["I(\widehat{\gamma})"']{dl}& Q \ar["\delta"]{d} \\
	IP \ar["I(u)"']{r} & I\rho CT' \ar["\epsilon^I"']{r} & CT'
	\end{tikzcd}
	\]
	where $\widehat{\gamma}$ is the adjunct of $\gamma$ along the coinduction restriction adjunction. Taking $F=I(\widehat{\gamma})$ proves the first claim.  
	
	To prove $I(v)\circ F = I(\epsilon_A^C)$, it suffices to show that $v\circ \widehat{\gamma} = \epsilon_A^C$.  By definition, $\widehat{\gamma}$ is equal to the composite $\epsilon_P^C\circ \rho(\gamma)$ and we have
	\[
	v\circ \widehat{\gamma} = v\circ \epsilon_P^C\circ \rho(\gamma) = \epsilon_A^C\circ \rho C(v)\circ \rho(y) = \epsilon_A^C\circ \id_{\rho CA} = \epsilon^C_A
	\]
	where the third equality uses naturality of $\epsilon^C$ and the fourth follows from our choice of $\gamma$.
\end{proof}

	\bibliographystyle{alpha}
	\bibliography{bibliography}
	
\end{document}